\documentclass[a4paper,12pt,reqno,oneside]{amsart} 

\usepackage{setspace} \setstretch{1.1}
\usepackage{typearea} 
\usepackage{amscd,amssymb,verbatim} 

\usepackage{xr-hyper}
\usepackage{cite}
\usepackage[colorlinks,backref,final,bookmarksnumbered,bookmarks]{hyperref}
\usepackage[backrefs]{amsrefs}

\usepackage{ifthen}
\newcommand{\arxiv}[2][]{\ifthenelse{\equal{#1}{}}
{\href{http://arxiv.org/abs/#2}{\tt arXiv:#2}}
{\href{http://arxiv.org/abs/math/#2}{\tt arXiv:math.#1/#2}}}

\theoremstyle{plain}
\newtheorem{theorem}{Theorem}[section]
\newtheorem{lemma}[theorem]{Lemma}

\newtheorem{corollary}[theorem]{Corollary}
\newtheorem{proposition}[theorem]{Proposition}
\newtheorem{problem}[theorem]{Problem}
\newtheorem{addendum}[theorem]{Addendum}
\newtheorem{maintheorem}{Theorem}
\newtheorem*{addendum*}{Addendum}

\theoremstyle{definition}
\newtheorem{example}[theorem]{Example}

\newtheoremstyle{remark}
{}{}{}{}{\itshape}{}{ }{\thmname{#1}\thmnumber{ \itshape #2.}}
\theoremstyle{remark}
\newtheorem{remark}[theorem]{Remark}

\DeclareMathOperator{\id}{id}
\DeclareMathOperator{\Int}{Int}
\DeclareMathOperator{\Hom}{Hom}
\DeclareMathOperator{\lk}{lk}
\DeclareMathOperator{\st}{st}
\DeclareMathOperator{\coker}{coker}

\def\x{\times}
\def\but{\setminus}
\def\emb{\hookrightarrow}

\def\eps{\epsilon}
\def\phi{\varphi}
\def\tl{\tilde}
\def\ccirc{\circledcirc}
\def\emptyset{\varnothing}
\renewcommand{\:}{\colon}
\def\xr#1{\xrightarrow{#1}}

\def\R{\mathbb{R}}
\def\Z{\mathbb{Z}}

\def\P{\mathcal{P}}
\def\T{\mathcal{T}}
\def\J{\mathcal{J}}

\begin{document}

\title{Lifting generic maps to embeddings. The double point obstruction}
\author{Sergey A. Melikhov}
\address{Steklov Mathematical Institute of Russian Academy of Sciences,
ul.\ Gubkina 8, Moscow, 119991 Russia}
\email{melikhov@mi-ras.ru}


\begin{abstract}
Given a generic PL map or a generic smooth fold map $f\:N^n\to M^m$, where 
$m\ge n$ and $2(m+k)\ge 3(n+1)$, we prove that $f$ lifts to a PL or smooth embedding 
$N\to M\x\R^k$ if and only if its double point locus $\{(x,y)\in N\x N\mid f(x)=f(y),\,x\ne y\}$ 
admits an equivariant map to $S^{k-1}$.
As a corollary we answer a 1990 question of P.~ Petersen and obtain some other applications.

We also discuss several criteria for lifting a non-degenerate PL map or a $C^0$-stable smooth map
$f\:N^n\to M^m$, where $m\ge n$, to an embedding in $M\x\R$, elaborating on V.~ Po\'enaru's observations.
In particular, the existence of such a lift is determined by the equivariant homotopy type of
the diagram consisting of the three projections from the triple point locus 
$\{(x,y,z)\in N\x N\x N\mid f(x)=f(y)=f(z),\,x\ne y\ne z\ne x\}$ to the double point locus.

The three Appendices, which can be read independently of the rest of the paper, are devoted to stable and 
generic maps.
Appendix \ref{pl-maps} introduces an elementary theory of stable PL maps.
Appendix \ref{sec-xx} extends the 2-multi-0-jet transversality theorem over the usual compactification of 
$M\x M\but\Delta_M$.
\end{abstract}

\maketitle

\section{Introduction} \label{sec1}

A continuous/PL/smooth map $f\:N\to M$ is called a topological/PL/smooth {\it $k$-prem} (from 
``{\bf pr}ojected {\bf em}bedding'') if there exists a map $g\:N\to\R^k$ such that 
$f\x g\:N\to M\x\R^k$ is a topological/PL/smooth embedding.
When the choice of a category is irrelevant, we will speak simply of ``$k$-prems''.
A discussion of this definition, including a comparison of the three categories, as well as a number of
references on $k$-prems can be found in the companion paper \cite{partI}.

The main objective of the present paper is to determine algebraically, under some reasonable hypotheses, 
whether a given map $f$ is a $k$-prem.
The main theorem of the present paper is applied in \cite{AM} and was originally motivated by that application.

For a space $N$ let $\Delta_N=\{(x,x)\in N\x N\}$ and $\tl N=N\x N\but\Delta_N$.
We endow $\tl N$ with the factor exchanging involution, and we also endow each sphere $S^n$ 
with the antipodal involution.
By a ``map'' we will mean a continuous map.
Given a map $f\:N\to M$, let $\Delta_f=\{(x,y)\in\tl N\mid f(x)=f(y)\}$.

A necessary condition for $f\:N\to M$ to be a $k$-prem is the existence of an equivariant map 
$\tl g\:\Delta_f\to S^{k-1}$.
Namely, $\tl g(x,y)=\frac{g(y)-g(x)}{||g(y)-g(x)||}$, where $g\:N\to\R^k$ is a map such that
$f\x g\:N\to M\x\R^k$ is an embedding.
In general, this condition is not sufficient.

\begin{example} 
The $3$-fold covering $f\:S^1\to S^1$ is not a $1$-prem, but there exists an equivariant map $\Delta_f\to S^0$.
Indeed, $\Delta_f$ is homeomorphic to $S^1\sqcup S^1$, where the involution permutes the two components.

Also there exists a non-degenerate%
\footnote{A PL map is called {\it non-degenerate} if it has no point-inverses of dimension $>0$.}
PL map $f$ from $S^1$ to the triod (=the cone over the three-point set) which is not a $1$-prem even though
there exists an equivariant map $\Delta_f\to S^0$ \cite{Go}*{Example 5}.
Namely, $f$ is the composition of the $3$-fold covering $S^1\to S^1$ and the unique simplicial surjection
from the boundary of the hexagon onto the triod.

However, if $f$ is a stable%
\footnote{Stable PL maps are defined in Appendix \ref{pl-maps}. 
A PL map $f$ of a graph in $\R^1$ is stable if and only if every point-inverse $f^{-1}(y)$
contains at most one point $x$ such that $f$ does not restrict to a homeomorphism between
a neighborhood of $x$ and a neighborhood of $y$.}
PL map of a graph to $\R^1$, then $f$ is a $1$-prem if and only if 
there exists an equivariant map $\Delta_f\to S^0$ \cite{Go}*{Theorem 11} (this is deduced in \cite{Go} from 
the result of \cite{FK}; in connection with the latter see also \cite{Sk}, \cite{AFT}).
Let us note that there exists a stable PL map of a trivalent {\it tree} to $\R^1$ which is 
not a $1$-prem \cite{Si} (see also \cite{ARS}*{\S3.1}).
\end{example}

More examples will be discussed toward the end of this introduction.

\begin{maintheorem}\label{th1} Suppose that $m\ge n$ and $2(m+k)\ge 3(n+1)$.
Let $f\:N^n\to M^m$, where $N$ is compact, be one of the following:

(a) a generic PL map of a polyhedron to a PL manifold;

(b) a generic smooth fold map%
\footnote{A {\it smooth fold map} is a smooth map whose only singularities are of the fold type.
In particular, every smooth immersion is trivially a smooth fold map.}
between smooth manifolds;

(c) a generic smooth map between smooth manifolds, where $3n-2m\le k$.

\noindent
If there exists an equivariant map $\Delta_f\to S^{k-1}$, then $f$ is a PL $k$-prem in the case of (a) and 
a smooth $k$-prem in the cases of (b) and (c).
\end{maintheorem}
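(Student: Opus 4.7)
The plan is to upgrade the given equivariant map $\tl g\:\Delta_f\to S^{k-1}$ to an actual lift $g\:N\to\R^k$ making $f\x g$ an embedding. The dimension hypothesis $2(m+k)\ge 3(n+1)$ is precisely the metastable range where Haefliger--Weber obstruction theory applies, so the heart of the proof should be a \emph{relative} version of that theory, in which $f$ is held fixed and the free variable is $g$. Equivalently, setting $\psi(x,y)=(f(y)-f(x),\,g(y)-g(x))$, the goal is to choose $g$ so that the equivariant map $\psi\:\tl N\to\R^{m+k}\but\{0\}$ is well defined; on $\tl N\but\Delta_f$ its first component is already nonzero, and on $\Delta_f$ its direction is prescribed by $\tl g$.

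First I would analyze the closure $\bar\Delta_f\subset\tl N$. For a generic map of type (a), (b) or (c), multi-jet transversality---extended to the standard compactification of $\tl M$ as in the Appendix---implies that $\bar\Delta_f$ is a compact equivariant polyhedron of dimension at most $2n-m$, whose strata reflect the singularity strata of $f$. In case (c), the hypothesis $3n-2m\le k$ is what keeps the higher Thom--Boardman strata of $f$ in sufficiently high codimension not to obstruct a generic lift. I would then extend $\tl g$ equivariantly to a regular neighborhood $U$ of $\bar\Delta_f$ in $\tl N$, using $f$ itself to supply canonical direction data away from $\Delta_f$ so that the extension matches $\psi/|\psi|$ on $\partial U$.

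Next I construct $g\:N\to\R^k$ inductively over a triangulation of $N$ compatible with the singular strata of $f$. The inductive step is to choose $g$ on a new simplex so that the associated difference map $(x,y)\mapsto g(y)-g(x)$ agrees, after normalization, with the extended $\tl g$ on $U$. The obstructions live in equivariant cohomology of the pair $(\tl N,\bar\Delta_f)$ with twisted $\pi_*(S^{k-1})$-coefficients and vanish in the metastable range by a dimension count against $\dim\bar\Delta_f\le 2n-m$ and the connectivity of $S^{k-1}$. A standard general position argument then confines the double points of $f\x g$ to $U$, where the prescribed directional data makes them removable by a small equivariant perturbation guided by $\tl g$.

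The main obstacle I expect is handling the interaction of $g$ with the singular set of $f$: $f\x g$ must be a \emph{local embedding} there, not merely injective. In case (b) this requires, along the fold locus, that $g$ separate the two local sheets of $f$ in exactly the direction prescribed by $\tl g$, which is most efficiently verified in the local normal form of a fold map. In case (a) the analogous step uses regular neighborhood engulfing along the cone-like portions of $\bar\Delta_f$ that meet $\Delta_N$, enabled by the theory of stable PL maps developed in \S\ref{pl-maps}; and in case (c) the extra hypothesis $3n-2m\le k$ provides the additional room needed to kill the non-fold contributions by a separate transversality step before running the obstruction calculation above.
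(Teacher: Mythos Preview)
Your outline has a structural gap at precisely the place the paper identifies as the main difficulty. You propose to ``construct $g$ inductively over a triangulation of $N$'' with obstructions in equivariant cohomology of $(\tl N,\bar\Delta_f)$, but these two things do not fit together: the free variable is a map $g\:N\to\R^k$, while the obstruction data live on $\Delta_f\subset\tl N$, and the passage between them is mediated by the projection $\pi\:\bar\Delta_f\to N$. When $f$ has triple points $\pi$ is not injective, so there is no canonical way to read off $g$ from configuration-space data, and a Haefliger--Weber style obstruction calculation on $\tl N$ does not by itself produce a map on $N$. The paper flags exactly this (see the remark in the introduction about Haefliger's trick): ``it is far from clear how one could possibly construct appropriate global Haefliger-style framings especially when $\Delta_f$ is not immersed in $N$.'' Your sentence ``a standard general position argument then confines the double points of $f\x g$ to $U$, where the prescribed directional data makes them removable'' is where the real work is hiding, and it is not standard.

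The paper's actual mechanism is different and more concrete. It first reduces to the case where $f$ is a \emph{simple fold map} (so $\pi\:\bar\Delta_f\to N$ is an immersion), not by inducting over a triangulation of $N$ but by inducting over simplexes of the \emph{target} $M$: over each open simplex of $M$ a non-degenerate PL $f$ is a trivial covering, hence a simple fold map, and the relative Main Lemma applies. In the simple-fold case the paper writes down an explicit homotopy
\[
e_t(x)=\tfrac12\big(g(x)+g(y)-\Phi(x,y,t)\big),
\]
where $y$ is the unique partner of $x$ in a small invariant neighborhood $U\subset\bar\Delta_f$ embedded by $\pi$ and $\Phi$ is a linear equivariant homotopy from $(x,y)\mapsto g(y)-g(x)$ to an isovariant PL map built from the given $\alpha$. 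This formula guarantees $e_t(y)-e_t(x)=\Phi(x,y,t)$, so the old double points disappear; the delicate part is that triple points of $f$ may create \emph{new} double points between $\pi(U)$ and $N\but\pi(U)$, and this is handled by a careful dimension count ($\dim D\le 2n-m-k+1<n/2$) and a second pass replacing $\Phi$ by a compactly supported $\Phi^+$. Cases (b) and (c) are then built on top of (a): (b) uses the PL result on $\Sigma_f$ and an addendum to the Main Lemma, and (c) removes a neighborhood of $\Sigma_f^{(3)}$ (of dimension $\le 3n-2m-2<k$) to reduce to (b). None of this is an obstruction-theoretic vanishing argument; it is an explicit ``new generalized Whitney trick,'' and that is the missing idea in your plan.
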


In this statement and in what follows we assume the following conventions.

1) The dimension of a manifold or a polyhedron is often indicated by a superscript.

2) ``Generic'' is understood as ``satisfying a certain fixed set of conditions that determine an open and 
dense set of maps'' (see \S\ref{generic0} for the details).
This is a slight variation of Gromov's terminology \cite{Gr}*{1.3.2(B)} (what he calls ``generic''
will be called ``weakly generic'' in Appendix \ref{sec-xx}).

In order to be able to apply Theorem \ref{th1} to specific maps, one may want to know explicitly 
the dense open sets whose existence it asserts.
It is not hard to see (see Propositions \ref{stability} and \ref{stability-pl}) that parts (a) and (b) 
are equivalent to their versions with ``generic'' replaced by ``stable''.%
\footnote{Stable smooth maps are discussed in a number of textbooks, including \cite{GG}, \cite{AGV}.
See also Appendix~ \ref{stable maps}.}
Also, it is easy to see that part (c) implies its version with ``generic'' replaced by ``stable'' 
(see the proof of Proposition \ref{stability}).
The converse implication holds in a wide range of dimensions (in particular, for all $n\le 8$), since 
by Mather's theorem \cite{MaVI} generic smooth maps $N^n\to M^m$, where $m\ge n$, are stable if 
either $6m\ge 7n-7$, or $6m\ge 7n-8$ and $m\le n+3$.

\begin{corollary} \label{stable}
Theorem \ref{th1} is true if ``generic'' is replaced by ``stable''.
\end{corollary}

\begin{remark} \label{constructive}
In Theorem \ref{th1}(a) one may also replace ``generic'' by another explicit condition, which is less restrictive
and easier to check than stability (see Remark \ref{gen-pos}, which uses the notation introduced in 
the first few lines of \S\ref{fold maps}).

As for part (c) of Theorem \ref{th1}, it is reduced to a relative version of (b) by a half-page argument 
(see \S\ref{1-mod-main}), and we leave it to the interested reader to formulate an explicit list of genericity 
conditions that are needed for that argument.
That list is not going to be short, although it might be possible to make it shorter by extra work.
However, the author is not convinced that part (c) is a definitive result; it seems more likely that
the real theorem in this direction has yet to be proved (see Remark \ref{improving}).
\end{remark} 

\begin{remark} \label{obvious}
Theorem \ref{th1} is obvious for immersions without triple points.
In this case, the projection $\pi\:N\x N\to N$ embeds $\Delta_f$.
Then any extension of the composition 
$\pi(\Delta_f)\xr{\pi^{-1}}\Delta_f\xr{\phi}S^{k-1}\subset\R^k$ to a PL or smooth map 
$g\:N\to\R^k$ yields a PL or smooth embedding $f\x g\:N\emb M\x\R^k$.

It is not hard to elaborate on this construction so as to prove Theorem \ref{th1} for generic maps 
without triple points (see Proposition \ref{3pt-free} for the PL case).
Let us note that a generic smooth map without triple points is a smooth fold map.
\end{remark}

\begin{remark} If $k=1$, then generic maps $N^n\to M^m$, where $2(m+k)\ge 3(n+1)$, have no triple points.
Thus the case $k=1$ in Theorem \ref{th1} is also quite easy. 
\end{remark}

\begin{remark} 
A natural approach to proving Theorem \ref{th1} is by trying to adapt Haefliger's generalized 
Whitney trick (see \cite{Ad}*{\S VII.4}; see also \cite{Sz1} for an alternative proof).
In fact, our proof below can be used to embed a core part of Haefliger's 
``standard model'' into a generic lift of $f$ in a way that agrees with 
the projection $M\x\R^k\to M$.
However, it is far from clear how one could possibly construct appropriate global 
Haefliger-style framings especially when $\Delta_f$ is not immersed in $N$.

We prove Theorem \ref{th1} by what can be called a new kind of generalized Whitney trick. 
It contrasts with the Whitney--Haefliger(--Koschorke) approach in that ours describes the desired homotopy 
essentially by an explicit formula.
(The formula is only aware of pairs of points, and may overlook triple-point information; because of this,
the homotopy given by the formula has to be slightly perturbed.)
Haefliger's construction is less explicit in that it depends on the choice of an embedding of 
the ``standard model'', whose existence is proved using obstruction theory.
However, it is difficult to compare the two versions of generalized Whitney trick directly because they 
apply under incompatible hypotheses.
\end{remark}

\begin{remark} \label{improving}
Theorem \ref{th1}(c) falls short of the announcements in \cite{M1}*{third remark after Theorem 5} 
and \cite{M3}*{\S1}, where it was conjectured to hold under the weaker restriction $4n-3m\le k$.
(See Remark \ref{announcement} below for an explanation.)
The main obstacle in extending Theorem \ref{th1}(b) to maps with more general singularities
is that already in the presence of cusps (i.e., singularities of type $\Sigma^{1,1,0}$) it is no longer easy 
to maintain the connection between the geometric maps and the configuration space data throughout the proof.
\end{remark}

\begin{proposition} \label{trivial range} 
Every stable smooth (PL) map $f\:N^n\to M^{2n+1-k}$ is a smooth (PL) $k$-prem.
\end{proposition}

\begin{proof} We will only discuss the smooth case here; the PL case is proved similarly.
Let $g\:N\to M\x\R^k$ be a smooth embedding that is sufficiently $C^\infty$-close to
the composition $N\xr{f} M\xr{\iota} M\x\R^k$, where $\iota$ is the inclusion onto $M\x\{0\}$.
Since $f$ is $C^\infty$-stable, we may assume that the composition $N\xr{g} M\x\R^k\xr{\pi}M$,
where $\pi$ is the projection, is $C^\infty$-left-right-equivalent to $f$.
But then clearly $g$ itself lifts to a smooth embedding.
\end{proof}

In the PL case, one can actually say more:

\begin{maintheorem} \label{nondeg} Every non-degenerate PL map of compact $n$-polyhedra is a PL $(n+1)$-prem.
\end{maintheorem}

\begin{proof} The given PL map is triangulated by a simplicial map $f\:K\to L$.
This in turn yields a simplicial map $K'\to L'$ between the barycentric subdivisions.
For each $i$-simplex $\tau$ of $L$ let us embed the preimage $f^{-1}(\hat\tau)$ of its
barycenter $\hat\tau$ into the $(i+1)$st coordinate axis $\left<e_{i+1}\right>$ of $\R^{n+1}$. 
Let us define $g\:K\to\R^{n+1}$ by these embeddings on the vertices of $K'$ and by extending 
linearly to each simplex of $K'$.
It remains to show that $f\x g\:|K|\to |L|\x\R^{n+1}$ is injective.

Given distinct points $x,y\in|K|$ such that $f(x)=f(y)$, they lie in the interiors of some 
distinct simplexes $s$, $t$ of $K'$ such that $f(s)=f(t)$.
Now $s$ is a join $\hat\sigma^{i_0}*\dots*\hat\sigma^{i_m}$, where 
$\sigma^{i_0}\subsetneqq\dots\subsetneqq\sigma^{i_m}$ are simplexes of $K$.
Moreover, each $f(\sigma^{i_k})=f(\tau^{i_k})$, where $\tau^{i_k}$ is the corresponding factor
of the similar decomposition $t=\hat\tau^{i_0}*\dots*\hat\tau^{i_m}$.
Since $s\ne t$, we have $\sigma^{i_m}\ne\tau^{i_m}$ and consequently 
$\hat\sigma^{i_m}\ne\hat\tau^{i_m}$.
Then also $g(\hat\sigma^{i_m})\ne g(\hat\tau^{i_m})$.

Writing $r=f(s)=f(t)$, we similarly have $r=\hat\rho^{i_0}*\dots*\hat\rho^{i_m}$.
Thinking of $r$ as a subset of $\R^m$, each line $L_k:=\hat\rho^{i_k}\x\left<e_{i_k+1}\right>$ 
is skew with respect to the affine subspace $A_k$ of $\R^m\x\R^{n+1}$ spanned by 
$L_0,\dots,L_{k-1}$ (that is, $L_k$ and $A_k$ are disjoint and contain no parallel vectors).
Therefore the join $L_0*\dots*L_m$ is contained in $\R^m\x\R^{n+1}$, and in fact in $r\x\R^{n+1}$.
Since $f\x g$ sends $\hat\sigma^{i_m}$ and $\hat\tau^{i_m}$ into distinct points of $L_m$,
it must also send $x$ and $y$ to distinct points of $L_0*\dots*L_m\subset r\x\R^{n+1}$.
\end{proof}

\begin{remark} Theorem \ref{nondeg} yields a simple explicit construction of an embedding of a given 
$n$-polyhedron in $\R^{2n+1}$ (see \cite{MZ}*{1.2} for another simple explicit construction).
Indeed, the barycentric subdivision $K'$ of a given $n$-dimensional simplicial complex $K$ admits a
non-degenerate simplicial map onto the $n$-simplex $\Delta^n$ (by sending the barycenter of each 
$i$-simplex of $K$ to the $(i+1)$st vertex of $\Delta^n$ and extending linearly).
\end{remark}

The following application of Theorem \ref{th1} is deduced from it in \S\ref{sec5}
by elaborating on some arguments due to P. M. Akhmetiev.

\begin{maintheorem} \label{akhmetiev}
Let $f\:N\to M$ be a stable smooth map between stably parallelizable smooth $n$-manifolds,
where $n\ne 1,2,3,7$.
Then $f$ is an $n$-prem.
\end{maintheorem}

Akhmetiev himself proved that every $f$ as above is {\it $n$-realizable} in the sense that the composition 
$N\xr{f}M\xr{\iota} M\x\R^n$ with the inclusion $\iota$ onto $M\x\{0\}$ is $C^0$-approximable by smooth embeddings
\cite{A1}, \cite{A2} (see also \cite{M1}*{\S5} and \cite{A3} for alternative proofs).
It is possible to deduce Theorem \ref{akhmetiev} from the statement of this original result of Akhmetiev
(although the direct proof of Theorem \ref{akhmetiev} given in \S\ref{sec5} is simpler than the one
given by this deduction). 
Namely, it is proved in \cite{AM} using Theorem \ref{th1} of the present paper that if $f\:N\to M$ is 
a stable smooth map between smooth $n$-manifolds (resp.\ a stable PL map from an $n$-polyhedron to 
a PL $n$-manifold), where $n\ge 3$, then $f$ is $n$-realizable if and only if it is a smooth 
(resp.\ PL) $n$-prem.
In fact, the equivalence of $k$-realizability and the $k$-prem property is proved in \cite{AM}
under much weaker hypotheses, which are however strictly stronger than those of Theorem \ref{th1}.
Moreover, there are explicit examples in \cite{AM} showing that a map $f$ as in Theorem \ref{th1}
can be $k$-realizable without being a $k$-prem.

Theorem \ref{akhmetiev} fails for $n=1,3,7$ by considering e.g.\ the double cover $S^n\to\R P^n$
(cf.\ \cite{M1}*{Example 3}).
It is unknown if it holds for $n=2$. 
Strong partial results on this problem were obtained by L. Funar and P. Pagotto \cite{FP}; 
see also \cite{M3} for another approach.

Here is another application of Theorem \ref{th1} (the proof is in \S\ref{sec5}).

\begin{maintheorem}\label{th3} Let $N$ be a PL (smooth) $\Z/2$-homology $n$-sphere, $M$ an orientable 
PL (smooth) $n$-manifold, $n>2$.
A stable PL (smooth) map $f\:N\to M$ is a PL (smooth) $n$-prem if either

(a) $\deg(f)$ is zero or odd; or

(b) $f_*\:\pi_1(N)\to\pi_1(M)$ is onto.
\end{maintheorem}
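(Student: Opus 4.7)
Plan. I would apply Theorem~\ref{th1}(a) in the PL case and Theorem~\ref{th1}(c) in the smooth case, with $m=k=n$. The main hypothesis $2(m+k)\ge 3(n+1)$ reduces to $n\ge 3$, which is given, and the extra hypothesis $3n-2m\le k$ of part (c) reduces to $n\le n$. By Remark~\ref{constructive} stable maps are generic in the required sense. It therefore suffices to construct a $\Z/2$-equivariant map $\tilde\varphi\:\Delta_f\to S^{n-1}$.

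Write $\Sigma_f:=\Delta_f/(\Z/2)$ and let $\lambda\to\Sigma_f$ be the real line bundle associated to the double cover $\Delta_f\to\Sigma_f$. Equivariant maps $\Delta_f\to S^{n-1}$ correspond bijectively to nowhere-vanishing sections of the rank-$n$ bundle $n\lambda$ over $\Sigma_f$. For a stable equidimensional map, $\Delta_f\subset\tl N$ has dimension at most $n$, hence so does $\Sigma_f$. By elementary obstruction theory the sole obstruction to such a section is the (possibly twisted) Euler class $e(n\lambda)\in H^n(\Sigma_f;\mathcal O_{n\lambda})$, whose mod-$2$ reduction is $w_1(\lambda)^n$.

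The central task is to show $e(n\lambda)=0$. The top stratum $\Sigma_f^{\mathrm{reg}}\incl\Sigma_f$ of unordered pairs of regular $f$-preimages surjects onto the regular values of $f$ in $M$, with generic fiber of size $\binom{d'}{2}$, where $d'=|f^{-1}(p)|$ satisfies $d'\equiv\deg f\pmod2$ by orientability of $N$ and $M$. In case (a), when $\deg f$ is either zero or odd, one has $d'(d'-1)\equiv0\pmod2$, and I would use this parity to pair up the elements of a generic fiber of $\Sigma_f^{\mathrm{reg}}\to M$ in a globally consistent way; after choosing any ``base direction'' in $\R^n$, this produces a nowhere-vanishing section of $n\lambda|_{\Sigma_f^{\mathrm{reg}}}$, killing the mod-$2$ Euler class. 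The integer statement is then obtained from a Bockstein argument, using that $N$ is a $\Z/2$-homology sphere so that the $2$-torsion in $H^*(\Sigma_f)$ is controllable. In case (b), surjectivity of $f_*$ provides a distinguished lift of $f$ through a cover $\hat M\to M$ corresponding to the image of $f_*$, on which $\Delta_{\hat f}$ and $\Sigma_{\hat f}$ become manageable and an argument parallel to (a) applies.

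Main obstacle. The hardest step is handling the singular stratification of $\Delta_f$---strata coming from fold, cusp, and other Morin-type singularities of $f$, together with the closure of $\Delta_f$ towards $\Delta_N$ arising from the fold set of $f$---and verifying that the combinatorial section constructed over $\Sigma_f^{\mathrm{reg}}$ either extends across these lower-dimensional strata or else that its obstruction to extension is still trapped in $H^n(\Sigma_f;\mathcal O_{n\lambda})$ and killed by the same parity argument. Both the mod-$2$-to-integer upgrade in~(a) and the cover construction in~(b) then require a sufficiently concrete description of $\Sigma_f$ as a stratified space, and it is here that the $\Z/2$-homology-sphere hypothesis on $N$ (as opposed to mere orientability) is needed.
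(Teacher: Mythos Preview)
Your opening reduction is correct and matches the paper: apply Theorem~\ref{th1} with $m=k=n$, use Remark~\ref{constructive} to pass from ``stable'' to ``generic'', and reduce to constructing an equivariant map $\Delta_f\to S^{n-1}$. Your obstruction-theoretic reformulation is also essentially the paper's Lemma~\ref{5.1}: the sole obstruction is $e(\lambda)^n$, and since $\Delta_f$ is an $n$-manifold (with free involution) this vanishes iff its mod-$2$ reduction $w_1(\lambda)^n$ does.

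The argument breaks down when you try to compute the obstruction. In part~(a) your parity observation $d'(d'-1)\equiv0\pmod2$ is vacuous: a product of consecutive integers is always even, regardless of $\deg f$. Even granting some pairing on the fibres of $\Sigma_f^{\mathrm{reg}}\to M$, ``choosing a base direction in $\R^n$'' cannot produce an equivariant map to $S^{n-1}$, since the involution on $\Delta_f$ exchanges $(x,y)\leftrightarrow(y,x)$, not the members of your pairing; a constant map is never equivariant for the antipodal action. In part~(b) the cover of $M$ corresponding to the image of $f_*$ is $M$ itself when $f_*$ is surjective, so your lift is just $f$ and nothing is gained.

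The paper's route from this point is quite different. Lemma~\ref{5.2} (quoted from \cite{M1}) converts the Yang-index condition into a concrete criterion: the degree of the projection $C\to N$ for each $\Z/2$-invariant component $C$ of $\Delta_f$. For~(a) this degree is controlled by \cite{M1}*{Theorem~2} together with the structural diagram~($*$) there. For~(b) one does pass to a cover, but it is the \emph{universal} cover of the mapping cylinder of $f$, yielding $f_0\:N_0\to M_0$ with $\pi_1(M_0)=1$; one then invokes \cite{M1}*{Theorem~3} to see that components of $\Delta_{f_0}$ project with degree zero to $N_0$, and a careful bookkeeping argument (tracking path-lifts in $\pi_1(M)$) transfers this back down to $\Delta_f$. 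The infinite-$\pi_1$ case requires a separate properness/degree argument via cohomology with compact supports. None of this is accessible through the fibrewise-counting picture you sketch.
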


By a PL $\Z/2$-homology $n$-sphere we mean a PL manifold (not just a polyhedral homology manifold) 
with the same $\bmod 2$ homology as $S^n$.

Theorem \ref{th3}(b) implies, in particular, the following:

\begin{corollary} \label{spheres} Stable smooth maps $S^n\to S^n$ are smooth $n$-prems for $n>2$.
\end{corollary}

For $n=2$ this is also true \cite{M1}*{Proof of the Yamamoto--Akhmetiev Theorem}.
In the cases $n=3,7$ Corollary \ref{spheres} is a rather delicate result, in the sense that some closely related 
assertions are false:
\begin{itemize}
\item the universal covering map of the Poincar\'e homology sphere is not a $3$-prem \cite{M1}*{Example 3}; 
\item a certain stable self-map of the Poincar\'e homology sphere is not a $3$-prem \cite{M1}*{Remark to Theorem 4};
\item for $n=3,7$ there exists an equivariant map $F\:S^n\x S^n\to S^n\x S^n$ such that $F|_{\tilde S^n}$
is transverse to $\Delta_{S^n}$ and $(F|_{\tilde S^n})^{-1}(\Delta_{S^n})$ admits no equivariant map to 
$S^{n-1}$ \cite{A4} (see also Lemma \ref{5.1} below).%
\footnote{It is also asserted in the main result of \cite{A4} that such equivariant maps $S^n\x S^n\to S^n\x S^n$
do not exist for $n\ne 1,3,7$.
A proof of this assertion can be found in the present paper, in the proof of Theorem \ref{akhmetiev} in \S\ref{sec5};
see also \cite{A3} for a different approach.}
\end{itemize}
Corollary \ref{spheres} is also a consequence of the author's previous result that stable smooth maps $S^n\to S^n$
are $n$-realizable for $n>2$ \cite{M1} and the aforementioned result of \cite{AM}, proved using 
Theorem \ref{th1} of the present paper.

Theorem \ref{th3}(a) yields a negative solution to Petersen's problem \cite{Pe}*{end of \S3}: 
does there exist a $3$-dimensional lens space $L(p,q)$ with $p$ odd whose
universal covering is not a $3$-prem?
(The condition ``with $p$ odd'' is not explicitly stated in Petersen's question, but is implied 
by the preceding discussion and certainly by his observation on the preceding page that no even 
degree covering $S^3\to M^3$ is a $3$-prem.)

\begin{corollary} The universal covering of every $3$-dimensional lens space $L(p,q)$ with $p$ odd
is a $3$-prem.
\end{corollary}

The following example shows that the restriction $2(m+k)\ge 3(n+1)$ in Theorem \ref{th1} cannot be relaxed.

\begin{example} \label{Boy}
It is easy to construct a self-transverse immersion $f$ of the M\"obius band in $\R^3$ such that $\Delta_f$ is 
homeomorphic to $S^1$ with the antipodal involution.
On the other hand, if $f$ is a self-transverse immersion of an orientable $2$-manifold $N$ in $\R^3$, 
it is easy to see that $\Delta_f$ admits an equivariant map to $S^0$.
Nevertheless, there exist self-transverse immersions of orientable $2$-manifolds in $\R^3$ that are not $1$-prems
\cite{Gi}*{\S2}, \cite{A0} (see also \cite{CS}, \cite{Sa}, \cite{ARS}*{\S3.3}).
Below we describe a variant of this construction, which is particularly simple in that it has only
one triple point in the case of a surface with boundary, and simultaneously extend it to higher dimensions.
\bigskip

{\bf (a)} Let us first describe an immersion $\phi\:G\to H$ between graphs with only one triple point
such that $\Delta_\phi$ admits an equivariant map to $S^0$ but $\phi$ is not a $1$-prem.

Our graphs will have loops and multiple edges.
Let $H$ consist of one vertex $v$ (of degree 6) and three edges $A$, $B$, $C$.
Let $G$ consist of three vertices $a$, $b$, $c$ (each of degree $4$) and the following six edges:
\begin{itemize}
\item $A'$ and $B''$ between $a$ and $b$;
\item $B'$ and $C''$ between $b$ and $c$;
\item $C'$ and $A''$ between $c$ and $a$.
\end{itemize}
We define $\phi$ by $\phi(a)=\phi(b)=\phi(c)=v$, $\phi(A')=\phi(A'')=A$, $\phi(B')=\phi(B'')=B$ and 
$\phi(C')=\phi(C'')=C$.%
\footnote{Let us note that if $g\:\R P^2\to\R^3$ is the Boy's surface, then the restriction of $g$ 
to the image of $\Delta_g$ in $\R P^2$ can be described similarly, except that $A''$ will connect $c$ 
to itself, $B''$ will connect $a$ to itself, and $C''$ will connect $b$ to itself.}
It is easy to see that $\Delta_\phi$ admits an equivariant map to $S^0$.
However, the pairs $(a,b)$, $(b,c)$ and $(c,a)$ lie in the same component of $\Delta_\phi$.%
\footnote{In fact, this remains true even if the edges $C$, $C'$ and $C''$ are deleted from $G$ and $H$.}
Hence if $g\:G\to\R$ is a map such that $\phi\x g\:G\to H\x\R$ is an embedding, then $g(a)-g(b)$, 
$g(b)-g(c)$ and $g(c)-g(a)$ must all have the same sign.
However, this contradicts the fact that their sum is $0$.%
\footnote{In fact, $\{(a,b,c)\}$ is the simplest example of what is called a Penrose staircase
in Theorem \ref{th4}.}
Thus $\phi$ is not a $1$-prem.
\bigskip

{\bf (b)} Next we construct an orientable $2n$-manifold $M$ with boundary and a self-transverse immersion 
$f\:M\to\R^{3n}$ with only one triple point such that $\Delta_f$ admits an equivariant map to $S^0$
but $f$ is not a $1$-prem.

This $f$ will be modeled on $\phi$, in the sense that $G$ and $H$ will appear as the images of 
$\Delta_f$ in $M$ and in $\R^3$, respectively, and $\phi$ will appear as the restriction of $f$.

Let $I=[-1,1]$, and let us consider three $2n$-disks $D_{xy}=I^n\x I^n\x 0$, 
$D_{yz}=0\x I^n\x I^n$ and $D_{zx}=I^n\x 0\x I^n$ in the three coordinate $2n$-planes in $\R^{3n}$.
(These disks are going to be neighborhoods of the vertices $a$, $b$, $c$ respectively, or rather 
their images under the immersion.)
Let $p_\pm=(\pm 1,0,\dots,0)\in I^n$ and let $x_\pm$, $y_\pm$ and $z_\pm$ be the images of $p_\pm$
in $I^n\x 0\x 0$, $0\x I^n\x 0$ and $0\x 0\x I^n$.
Let us consider three arcs with interiors outside $I^{3n}$: an arc $J_{xy}$ connecting $x_+$ with $y_-$; 
an arc $J_{yz}$ connecting $y_+$ with $z_-$; and an arc $J_{zx}$ connecting $z_+$ with $x_-$.
(These arcs are going to lie in the edges $A$, $B$, $C$ respectively.)
Let $e_i$ be the standard $i$th basis vector in $\R^n$, and let $v_i^x$, $v_i^y$ and $v_i^z$ be its images
in $\R^n\x 0\x 0$, $0\x\R^n\x 0$ and $0\x 0\x\R^n$.

We may assume that $J_{xy}$ lies in the $2$-plane through $0$ spanned by $v_1^x$ and $v_1^y$, and similarly
for $J_{yz}$ and $J_{zx}$.
The normal bundle $\lambda$ of $J_{xy}$ in this plane is a line subbundle in the normal bundle $\nu$ of $J_{xy}$
in $\R^{3n}$.
Let us note that $\lambda$ is spanned by (a translate of) $v_1^y$ over $x_+$ and by $v_1^x$ over $y_-$.
Another line subbundle $\lambda'$ in $\nu$ is spanned by $v_1^z$.
The $2$-bundle $\lambda\oplus\lambda'$ can also be represented as $\lambda_1\oplus\lambda_1'$, where
$\lambda_1$ and $\lambda_1'$ coincide with $\lambda$ and $\lambda'$ over $x_+$, and then start rotating 
away from them, eventually (i.e.\ over $y_-$) rotating through an angle of $\pi/2$.
If we do the rotation counterclockwise, then the frame $(v_1^y,v_1^z)$ over $x_+$ is transported along
$\lambda_1\oplus\lambda_1'$ into the frame $(v_1^z,-v_1^ x)$ over $y_-$.

For each $i>1$ the vectors $v_i^x$, $v_i^y$ and $v_i^z$ also span line subbundles $\mu_x$, $\mu_y$ 
and $\mu_z$ of $\nu$.
The $3$-bundle $\mu_x\oplus\mu_y\oplus\mu_z$ can also be represented as $\mu_i\oplus\mu_i'\oplus\mu_i''$, 
where $\mu_i$, $\mu_i'$ and $\mu_i''$ coincide with $\mu_x$, $\mu_y$ and $\mu_z$ over $x_+$, and then start 
rotating around the vector $v_i^x+v_i^y+v_i^z$, so that eventually the three vectors exchange cyclically 
according to the permutation $(xyz)$.
In other words the frame $(v_i^x,v_i^y,v_i^z)$ over $x_+$ is transported along $\mu_i\oplus\mu_i'\oplus\mu_i''$
into the frame $(v_i^y,v_i^z,v_i^x)$ over $y_-$.

Let us now connect $D_{xy}$ with $D_{yz}$ by the total space of the $(2n-1)$-disk bundle over $J_{xy}$ associated with
$\lambda_1\oplus\mu_2\dots\oplus\mu_n\oplus\mu_2'\oplus\dots\oplus\mu_n'$.
(The union of this total space with $D_{xy}$ and $D_{yz}$ is going to be a neighborhood of the edge $A'$,
or rather its image under the immersion.)
We also connect $D_{zx}$ with $D_{xy}$ by the total space of the $(2n-1)$-disk bundle over $J_{xy}$ associated with
$\lambda_1'\oplus\mu_2\oplus\dots\oplus\mu_n\oplus\mu_2''\dots\oplus\mu_n''$.
(This is going to be an immersed neighborhood of $A''$.)
We similarly connect $D_{xy}$ with $D_{yz}$ and $D_{yz}$ with $D_{zx}$ along $J_{yz}$.
(These are going to be immersed neighborhoods of $B'$ and $B''$, respectively.)
Also we connect $D_{yz}$ with $D_{zx}$ and $D_{zx}$ with $D_{xy}$ along $J_{zx}$.
(These are going to be immersed neighborhoods of $C'$ and $C''$, respectively.)

Thus we obtain an orientable manifold with boundary $M^{2n}$ and a self-transverse immersion $f\:M\to\R^{3n}$ 
with one triple point which extends $\phi$ so that $\Delta_f=\Delta_\phi$.
Hence $\Delta_f$ admits an equivariant map to $S^0$ but $f$ is not a $1$-prem.
\bigskip

{\bf (c)} Finally we show that the previous construction can be improved so as to get rid of the boundary.
Thus we will construct a closed orientable $2n$-manifold $W$ and a self-transverse immersion $F\:W\to\R^{3n}$ 
(with several triple points) such that $\Delta_F$ admits an equivariant map to $S^0$ but $F$ is not a $1$-prem.

First we note that in the previous construction, $M$ is a $2n$-dimensional thickening of the graph $G$, 
that is, a handlebody with three $0$-handles and six $1$-handles; and $f$ factors through an immersion 
$f\:(M,\partial M)\to (N,\partial N)$, where $N$ is a $3n$-dimensional thickening of the graph $H$, 
that is, a handlebody with one $0$-handle $h$ and three $1$-handles $h_i$.

Let $B$ be obtained from $N$ by attaching three $2$-handles $h'_i$ so that they cancel geometrically 
the $1$-handles $h_i$.
(Thus $B$ is homeomorphic to the $3n$-ball.)
The $2$-handles are attached to $N$ along thin tubular neighborhoods $T_i$ of three embedded circles 
$C_i\subset\partial N$, and it is not hard to ensure that each $C_i\cap h_i$ is an arc disjoint from
$f(\partial M)$, and each $C_i\cap h$ is an arc which meets $f(\partial M)$ transversely (if at all).
Thus $f(\partial M)$ lies entirely in $\partial B$ and in the $(3n-1)$-disks $Q_i:=T_i\cap h$.
Moreover, each $f(\partial M)\cap Q_i$ consists of parallel copies of an unknotted $(2n-1)$-disk
in the $(3n-1)$-ball $Q_i$ (since the tubular neighborhood $T_i$ is thin and the intersections of 
its core circle $C_i$ with $f(\partial M)$ are transversal).

Now the double $2M:=M\cup_{\partial M=\partial M} M$ is immersed by $2f$ into 
$S^{3n}=B\cup_{\partial B=\partial B}B$, but still has boundary, which is embedded by $2f$ into 
the annuli $2Q_i\cong D^1\x S^{3n-2}$ which in turn lie in the tori $2T_i\cong S^1\x S^{3n-2}$.
These tori bound the solid tori $2h'_i\cong D^2\x S^{3n-2}$; and to these solid tori there are 
attached the $(3n-1)$-handles $2h_i\cong D^1\x D^{3n-1}$.
The unions $U_i:=2h'_i\cup 2h_i\cong D^1\x D^{3n-1}$ can also be viewed as $(3n-1)$-handles, whose
bases are our annuli $2Q_i\cong D^1\x S^{3n-2}$ containing the image of $\partial(2M)$.
Now the image of $\partial(2M)$ in each $2Q_i$ consists of parallel copies of an embedded 
$(2n-1)$-sphere, which is unknotted in the $(3n-1)$-sphere $\partial U_i$; consequently they
bound parallel $2n$-disks in the $3n$-ball $U_i$.
The desired immersion $F$ is the extension of $2f$ by means of these embedded disks.
Although the interior of each of these disks intersects the image of $2f(M)$, the intersection
corresponds to two new components of $\Delta_F$ which are interchanged by the involution,
so the equivariant map $\Delta_{2f}\to S^0$ yields an equivariant map $\Delta_F\to S^0$.
\end{example}

For a space $X$ let $\Delta^{(3)}_X=\{(x,x,x)\in X^3\}$ and 
$X^{(3)}=\{(x,y,z)\in X^3\mid x\ne y\ne z\ne x\}$, and 
for a map $f\:X\to Y$ let $\Delta^{(3)}_f=\{(x,y,z)\in X^{(3)}\mid f(x)=f(y)=f(z)\}$.

\begin{maintheorem}\label{th4} Let $f\:X\to Y$ be a non-degenerate PL map between compact polyhedra.
The following are equivalent:
\begin{enumerate}
\item $f$ is a PL $1$-prem;
\item there exist an equivariant map $\phi\:\Delta_f\to S^0=\{1,-1\}$ and for each $y\in Y$ 
a total ordering of $f^{-1}(y)$ such that $x<x'$ if and only if $\phi(x,x')=1$;
\item there exists an equivariant map $\phi\:\Delta_f\to S^0$ such that $\phi^3\:(\Delta_f)^3\to (S^0)^3$ 
sends every triple of the form $\big((x,y),(y,z),(z,x)\big)$ into the complement of $\Delta^{(3)}_{S^0}$;
\item $f$ has no Penrose staircases, where a {\rm Penrose staircase} for $f$ is a finite subset 
$F\subset\Delta^{(3)}_f$ such that for every equivariant map $\phi\:\Delta_f\to S^0$ there exists an 
$(x,y,z)\in F$ such that that $\phi(x,y)=\phi(y,z)=\phi(z,x)$;
\item there exist an $S_2$-equivariant map $\Phi\:\Delta_f\to\tilde\R$ and an $S_3$-equivariant%
\footnote{$S_i$ denotes the symmetric group.}
map $\Psi\:\Delta^{(3)}_f\to\R^{(3)}$ that, given any choice of 
an embedding $j\:\{1,2\}\to\{1,2,3\}$, commute with the corresponding projections 
$\pi_j\:\Delta^{(3)}_f\to\Delta_f$ and $\Pi_j\:\R^{(3)}\to\tilde\R$.
\end{enumerate}
\end{maintheorem}

Theorem \ref{th4} elaborates on some results found in the literature, most notably in Po\'enaru's 
paper \cite{Po2}.
The implication (3)$\Rightarrow$(2) is implicit in the proof of \cite{Po2}*{Lemme 1.5}.
The implication (2)$\Rightarrow$(1) for generic smooth immersions between manifolds is implicit 
in \cite{Po2}*{Lemme 1.4} (whose proof is left to the reader); it is proved in \cite{Gi}*{Proposition 4} 
for generic immersions of orientable $2$-manifolds in $\R^3$ and in \cite{Go}*{Theorem 2} in the general case.
See also \cite{Fa} for a related result.

Yet another condition equivalent to those of Theorem \ref{th4} is contained in A. Gorelov's recent paper 
\cite{Go}*{Theorem 4}.
His approach is somewhat different (as compared to conditions (4) and (5)), and can be regarded as 
a natural correction of the approach of \cite{Po2} (whose main result is incorrect, as shown in 
\cite{Go}*{Theorem 5}).

\begin{proof} The implications (1)$\Rightarrow$(2)$\Rightarrow$(3)$\Rightarrow$(4) are trivial.

(2)$\Rightarrow$(1).
Let $K$ and $L$ triangulate $X$ and $Y$ so that $f$ is triangulated by a simplicial map $K\to L$.
For each vertex $v$ of $L$ let us pick a monotone embedding of $f^{-1}(v)$ in $\R$.
This defines a map $g\:X\to\R$ on the vertices of $K$, and we extend it linearly to each simplex of $K$.
If $(x,y)\in\Delta_f$, then $x=\sum_i\lambda_iv_i$ and $y=\sum_i\lambda_iw_i$ for some $\lambda_i>0$ 
such that $\sum_i\lambda_i=1$ and some vertices $v_i$ and $w_i$ of $K$ such that $f(v_i)=f(w_i)$ for each $i$,
and $v_i\ne w_i$ for at least one $i$.
Clearly, each pair $(v_i,w_i)$ lies either in $\Delta_X$ or in the same component of $\Delta_f$ as $(x,y)$.
In the latter case $\phi(v_i,w_i)=\phi(x,y)$ for each $i$, and therefore each $g(v_i)-g(w_i)$ is of the same sign as 
$\phi(x,y)\in S^0=\{1,-1\}$.
Then $g(x)-g(y)=\sum_i\lambda_i\big(g(v_i)-g(w_i)\big)$ is also of that sign, and in particular $g(x)\ne g(y)$.
Hence $f\x g\:X\to Y\x\R$ is injective.

(3)$\Rightarrow$(2).
Given an $y\in Y$, let us define a binary relation $<$ on $f^{-1}(y)$ by $x<x'$ if $x\ne x'$ and $\phi(x,x')=1$.
By (3) $<$ is transitive. 
Hence it is a strict total order.

(4)$\Rightarrow$(3).
Clearly, $\Delta_f$ has only finitely many connected components.
Hence there exist only finitely many distinct equivariant maps $\Delta_f\to S^0$.
Let us denote them by $\phi_1,\dots,\phi_n$.
If (3) does not hold, then each $\phi_i$ sends some triple $\big((x_i,y_i),(y_i,z_i),(z_i,x_i)\big)$ into
$\Delta^{(3)}_{S^0}$.
Then the triples $(x_i,y_i,z_i)$, $i=1,\dots,n$, form a Penrose staircase.

(5)$\Rightarrow$(3).
The natural embedding $e\:X^{(3)}\to(\tilde X)^3$,
$e(x,y,z)=\big((y,z),(z,x),(x,y)\big)$, yields embeddings
$e_f\:\Delta^{(3)}_f\to(\Delta_f)^3$ and $e_\R\:\R^{(3)}\to(\tilde\R)^3$.
Clearly, $\Phi$ and $\Psi$ commute with the projections $\pi_j$ and $\Pi_j$ for each $j$ 
if and only if $\Phi^3$ and $\Psi$ commute with $e_f$ and $e_\R$.
On the other hand, if $h\:\tilde\R\to S^0$ is a homotopy equivalence, then 
$h^3\:(\tilde\R)^3\to(S^0)^3$ sends the image of $e_\R$ onto the complement of 
$\Delta^{(3)}_{S^0}$.

(3)$\Rightarrow$(5).
In the notation of the proof of (5)$\Rightarrow$(3), let us note that $e$ is equivariant 
with respect to the action of $S_3$ on $(\tilde X)^3$ given by 
$\sigma(p_1,p_2,p_3)=(t^\sigma p_{\sigma(1)},t^\sigma p_{\sigma(2)},t^\sigma p_{\sigma(3)})$,
where $t^\sigma\:\tilde X\to\tilde X$ is the factor exchanging involution if $\sigma$ is odd
and the identity if $\sigma$ is even.
Hence the composition $\Delta^{(3)}_f\xr{e_f}(\Delta_f)^3\xr{\phi^3}(S^0)^3\but\Delta^{(3)}_{S^0}$
is equivariant with respect to the free action of $S_3$ on $(S^0)^3\but\Delta^{(3)}_{S^0}$
given by $\sigma(\eps_1,\eps_2,\eps_3)=\big((-1)^\sigma\eps_{\sigma(1)},
(-1)^\sigma\eps_{\sigma(2)},(-1)^\sigma\eps_{\sigma(3)}\big)$.
Also, the composition $\R^{(3)}\xr{e_\R}(\tilde\R)^3\xr{h^3}(S^0)^3\but\Delta^{(3)}_{S^0}$ is
an $S_3$-equivariant homotopy equivalence.
Hence we obtain the desired $S_3$-map $\Psi$.
\end{proof}

\begin{corollary} Let $f\:X^n\to Y^m$ be a $C^0$-stable smooth map between compact smooth manifolds, where $n\le m$.
Then the assertion of Theorem \ref{th4} with ``PL $1$-prem'' replaced by ``smooth $1$-prem'' holds for $f$.
\end{corollary}

\begin{proof} By Verona's theorem (see Theorem \ref{Shiota}) $f$ is $C^0$-left-right equivalent to a PL map $f'$.
Since $n\le m$, both maps are non-degenerate.
By \cite{partI}*{Theorem B} $f$ is a smooth $1$-prem if and only if it is a topological
$1$-prem.
The latter is equivalent to $f'$ being a topological $1$-prem.
Now the assertion follows from Theorem \ref{th4}.
\end{proof}

\begin{problem} Let $f\:X\to Y$ be a finite-to-one map between compacta.
Does the assertion of Theorem \ref{th4} with ``PL $1$-prem'' replaced by ``topological $1$-prem'' 
hold for $f$?
\end{problem}

\section{Generic maps} \label{generic0}

\subsection{Generic smooth maps} \label{generic}
In the present paper we adhere to the following understanding of the term {\it generic}.
It can be used only in assertions of certain types.
First we consider the smooth case.
Let $N$ and $M$ be smooth manifolds, where $N$ is compact.%
\footnote{The case of non-compact $N$ is discussed in Appendix \ref{sec-xx} but is not needed in the main part of
the present paper.}
The assertion ``every generic smooth map $N\to M$ satisfies property $P$'' (or any logically equivalent 
assertion) means ``$C^\infty(N,M)$ contains a dense open subset whose elements satisfy property $P$''.
The assertion ``every generic smooth fold map $N\to M$ satisfies property $P$'' (or any logically equivalent 
assertion) means ``the subspace of fold maps in $C^\infty(N,M)$ contains a dense open subset whose elements 
satisfy property $P$''.
Here ``fold'' can be replaced by any other adjective or adjective phrase, and one can similarly
use adjectives or adjective phrases in the PL case, which will be discussed below.

Let us note that, according to our convention, the word ``generic'' is not 
an adjective, but a more complex part of speech (which modifies not only a noun phrase but the whole sentence, 
or even several sentences).
One can also understand ``generic'' as an adjective, whose meaning e.g.\ in case of ``generic smooth maps'' 
would be ``satisfying a certain fixed set of conditions that determine an open and dense set of maps in 
$C^\infty(N,M)$''.
A drawback of this reading is that the scope of the implicit existential quantifier in ``a certain fixed 
set'' is not clearly specified, which may lead to ambiguity.

\begin{proposition} \label{stability}
Theorem \ref{th1}(b) is equivalent to its modified version where ``generic'' is replaced by ``stable''.
\end{proposition}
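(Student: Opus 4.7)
The plan is to reduce the equivalence to a single structural observation about the space of smooth fold maps. Write $F\subset C^\infty(N,M)$ for the set of smooth fold maps, $A$ for the set of corank one maps, and $S$ for the set of $C^\infty$-stable maps. The key claim I would establish first is that $F\cap S$ is open and dense in $F$; granted this, both directions of the equivalence are essentially formal.

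For openness, stability is an open condition in $C^\infty(N,M)$, so $F\cap S$ is open in $F$. For density, the main point is that $F$ itself is open in $A$: for a fold map $f$, the singular locus $\Sigma^1f$ is a compact submanifold of $N$ on which $f$ restricts to an immersion, and both conditions persist under small perturbations of $f$ within $A$ (compactness of $N$ is essential here for the standard openness of immersions). Combined with Theorem \ref{Morin}, which asserts that $S\cap A$ is dense in $A$, every $f\in F$ admits arbitrarily close approximations by maps in $F\cap S$.

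To deduce the generic version from the stable version, I would simply take $U=F\cap S$ as the required open dense subset of $F$. Conversely, assume the generic version with corresponding open dense $U\subset F$, and let $f$ be a stable fold map admitting an equivariant map $\tl g\:\Delta_f\to S^{k-1}$. By stability, $f$ has a neighborhood $V\subset C^\infty(N,M)$ in which every map is $C^\infty$-left-right equivalent to $f$; since left-right equivalence preserves the fold property, $V\subset F$, so $V\cap U\ne\emptyset$. Pick $f'\in V\cap U$ and write $f'=\psi\circ f\circ\phi^{-1}$ for diffeomorphisms $\phi$ of $N$ and $\psi$ of $M$. Then $\Delta_{f'}=(\phi\x\phi)(\Delta_f)$ equivariantly, so $\tl g\circ(\phi\x\phi)^{-1}$ is an equivariant map $\Delta_{f'}\to S^{k-1}$. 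The generic version then produces $g'\:N\to\R^k$ with $f'\x g'$ an embedding, and composing with the diffeomorphisms $\phi$ of $N$ and $\psi^{-1}\x\id$ of $M\x\R^k$ shows that $f\x(g'\circ\phi)$ is also an embedding, so $f$ is a $k$-prem.

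No significant obstacle arises: the argument is entirely formal and its only nontrivial input is the openness of $F$ inside $A$, which is a routine consequence of standard facts about immersions on compact manifolds.
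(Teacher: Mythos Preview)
Your proof is correct and follows essentially the same route as the paper's: both establish that $F\cap S$ is open and dense in $F$ (the paper simply asserts $F$ is open and invokes Theorem~\ref{Morin}, while you argue openness of $F$ inside $A$ more explicitly), and both handle the two implications by the same formal use of left-right equivalence. Your justification for the openness of $F$ is somewhat informal---the singular locus $\Sigma^1 f'$ moves under perturbation, so ``both conditions persist'' requires a bit more care---but the paper does not prove this either, and the cleanest route is the standard jet-space argument that fold maps are characterized by $j^2f$ avoiding a closed subset of $J^2(N,M)$.
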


\begin{proof} Let $F,S,A\subset C^\infty(N,M)$ consist of all fold maps, of all stable maps and of all corank one
maps, respectively.
Since $S$ is open, $S\cap F$ is open in $F$.
Since $S\cap A$ is dense in $A$ (see Theorem \ref{Morin}) and $F$ is open, $S\cap F$ 
is dense in $F$.
Thus the modified version of Theorem \ref{th1}(b) implies the original version.

Conversely, let $G$ be the dense open subset of $F$ given by the statement of Theorem \ref{th1}(b), 
and let $f\:N\to M$ be a stable smooth fold map.
Since $G$ is dense in $F$, there exists a $g\in G$ that is $C^\infty$-left-right equivalent to $f$.
Then $f$ is a smooth $k$-prem if and only if $g$ is a smooth $k$-prem; also, there exists
an equivariant map $\Delta_f\to S^{k-1}$ if and only if there exists an equivariant map $\Delta_g\to S^{k-1}$.
Hence Theorem \ref{th1}(b) implies its modified version.
\end{proof}

\subsection{Generic PL maps} \label{generic-pl}
Here we assume familiarity with Appendix \ref{pl-maps}.

Let $N$ and $M$ be polyhedra, where $N$ is compact.
The assertion ``every generic PL map $N\to M$ satisfies property $P$'' (or any logically equivalent 
assertion) means ``for each triangulations $K$, $L$ of $N$, $M$ and every stratification map $\phi$ between
their face posets, $C(\phi)$ contains an open dense subset whose elements satisfy property $P$''.

The proof of Theorem \ref{th1} will also utilize the following definition. 
Let $N_0$ be a closed subpolyhedron of $N$.
The assertion ``every PL map $N\to M$, generic on $N\but N_0$, satisfies property $P$'' (or any logically 
equivalent assertion) means ``for each triangulations $(K,K_0)$ of $(N,N_0)$ and $L$ of $M$ and every 
stratification map $\phi$ between the face posets of $K$ and $L$, the space $C(\phi)$ contains a subset 
whose elements satisfy property $P$ and whose intersection with each point-inverse $F$ of the restriction map 
$C(\phi)\to C(\phi_0)$ is dense and open in $F$, where $\phi_0$ denotes the restriction of $\phi$
to the face poset of $K_0$''.

\begin{proposition} \label{stability-pl}
Theorem \ref{th1}(a) is equivalent to its modified version where ``generic'' is replaced by ``stable''.
\end{proposition}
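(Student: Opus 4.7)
My plan is to mirror the proof of Proposition \ref{stability}, with Theorem \ref{PL stability} playing the role of Theorem \ref{Morin}. Write (Gen) for Theorem \ref{th1}(a) and (Stab) for its modification with ``generic'' replaced by ``stable''. The three ingredients I will use are: (i) for every stratification map $\phi$, $S(\phi)$ is open and dense in $C(\phi)$; (ii) every $f\in S(\phi)$ is by construction a stable PL map; and (iii) both the hypothesis ``$\Delta_f$ admits an equivariant map to $S^{k-1}$'' and the conclusion ``$f$ is a PL $k$-prem'' are invariant under PL-left-right equivalence.

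For (Stab)$\Rightarrow$(Gen), I fix any triangulations $K$, $L$ of $N$, $M$ and any stratification map $\phi$ between their face posets. Theorem \ref{PL stability} gives that $S(\phi)$ is a dense open subset of $C(\phi)$, and by (ii) every element of $S(\phi)$ is stable. Assuming (Stab), every element of $S(\phi)$ whose double-point locus admits an equivariant map to $S^{k-1}$ is a PL $k$-prem. Hence $S(\phi)$ itself is the dense open subset of $C(\phi)$ required by the definition of ``generic''.

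For (Gen)$\Rightarrow$(Stab), let $f\:N\to M$ be a stable PL map admitting an equivariant map $\Delta_f\to S^{k-1}$. By the definition of stability there exist $K$, $L$, $\phi$ and some $f_0\in S(\phi)$ that is PL-left-right equivalent to $f$, and $f_0$ has a neighborhood $V\subset C(\phi)$ whose every element is PL-left-right equivalent to $f_0$. Applying (Gen) to this $(K,L,\phi)$ produces a dense open $U\subset C(\phi)$ whose elements satisfy the conclusion of Theorem \ref{th1}(a). Any $g\in U\cap V$ is PL-left-right equivalent to $f$; by (iii), $g$ admits an equivariant map $\Delta_g\to S^{k-1}$, whence by (Gen) it is a PL $k$-prem, and then by (iii) again $f$ is a PL $k$-prem.

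The only point not directly available from the earlier text is (iii), and I expect it to be purely formal. If PL homeomorphisms $\phi\:N\to N$ and $\psi\:M\to M$ satisfy $\psi f=g\phi$, then the assignment $(x,y)\mapsto(\phi(x),\phi(y))$ is an equivariant PL homeomorphism $\Delta_f\xr{\cong}\Delta_g$, so equivariant maps to $S^{k-1}$ exist on one side iff on the other. Likewise, if $g\x h\:N\emb M\x\R^k$ is a PL embedding, then $f\x(h\phi)=(\psi^{-1}\x\id_{\R^k})\circ(g\x h)\circ\phi$ is also a PL embedding, exhibiting $f$ as a PL $k$-prem. I foresee no genuine obstacle.
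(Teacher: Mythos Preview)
Your proof is correct and follows essentially the same route as the paper's. Both directions use Theorem \ref{PL stability} to produce the dense open $S(\phi)$, and both use PL-left-right invariance of the hypothesis and conclusion to transfer between a stable $f$ and a nearby generic $g$; your argument is slightly more explicit in invoking the open neighborhood $V\subset C(\phi)$ from the definition of $S(\phi)$ and in spelling out (iii), but the content is the same.
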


\begin{proof} Theorem \ref{PL stability} implies that generic PL maps $N\to M$ are stable.
(In more detail, let $\phi$ be a stratification map between the face posets of some triangulations
$K$, $L$ of $N$, $M$.
By Theorem \ref{PL stability} $S(\phi)$ is open and dense in $C(\phi)$.
But every element of $S(\phi)$ is a stable PL map, by using the same triangulations.)
Hence the modified version of Theorem \ref{th1}(a) implies the original version.

Conversely, let $f\:N\to M$ be a stable PL map.
Then there exist triangulations $K$, $L$ of $P$, $Q$ and a stratification map $\phi$ between the face posets 
of $K$ and $L$ such that $f\in S(\phi)$.
Assuming the assertion of Theorem \ref{th1}(a), we get an open dense subset $G$ of $C(\phi)$ 
such that if $g\in G$ and there exists an equivariant map $\Delta_g\to S^{k-1}$, then $g$ is a PL $k$-prem.
Since $G$ is dense and $f$ is PL-left-right equivalent to some $f'\in S(\phi)$, some $g\in G$ is 
PL-left-right equivalent to $f'$ and hence to $f$.
Then $f$ is a PL $k$-prem if and only if $g$ is a PL $k$-prem; also, there exists an equivariant map 
$\Delta_f\to S^{k-1}$ if and only if there exists an equivariant map $\Delta_g\to S^{k-1}$.
Hence Theorem \ref{th1}(a) implies its modified version.
\end{proof}

\section{Proof of Theorem \ref{th1} modulo Main Lemma} \label{thm-mod-lemma}

\subsection{Proof of Theorem 1 modulo Main Theorem} \label{1-mod-main}
The following theorem will be proved later in this section modulo a certain Main Lemma.

\begin{theorem}[Main Theorem]\label{mainthm} Suppose that $m\ge n$ and $2(m+k)\ge 3(n+1)$.

\smallskip
(a) Let $M$ be a PL $m$-manifold and $N$ a compact $n$-polyhedron, and let $M_\star$, $N_\star$ be their 
closed subpolyhedra.
Let $f\:N\to M$ be a PL map such that $f^{-1}(M_\star)=N_\star$ and $f$ is generic on $N\but N_\star$, and 
let $f_\star=f|_{N_\star}$.

Suppose that $e_\star\:N_\star\to\R^k$ is a PL map such that $f_\star\x e_\star\:N_\star\to M_\star\x\R^k$ is 
an embedding and $\tl e_\star\:\Delta_{f_\star}\to S^{k-1}$ extends to an equivariant map
$\alpha\:\Delta_f\to S^{k-1}$.

Then $e_\star$ extends to a PL map $e\:N\to\R^k$ such that $f\x e\:N\to M\x\R^k$ is 
an embedding and $\tl e\:\Delta_f\to S^{k-1}$ is equivariantly homotopic to $\alpha$ 
keeping $\Delta_{f_\star}$ fixed.
\smallskip

(b) Let $M$ be a smooth $m$-manifold and $N$ a compact smooth $n$-manifold, and let $M_\star$ and
$N_\star$ be codimension zero closed submanifolds of $M$ and $N$ respectively.
Let $f\:N\to M$ be a generic smooth fold map such that $f^{-1}(M_\star)=N_\star$, and let 
$f_\star=f|_{N_\star}$.

Suppose that $e_\star\:N_\star\to\R^k$ is a map such that $f_\star\x e_\star\:N_\star\to M_\star\x\R^k$ 
is a smooth embedding and $\tl e_\star\:\Delta_{f_\star}\to S^{k-1}$ extends to an equivariant map
$\alpha\:\Delta_f\to S^{k-1}$.

Then $e_\star$ extends to a map $e\:N\to\R^k$ such that $f\x e\:N\to M\x\R^k$ is a smooth embedding 
and $\tl e\:\Delta_f\to S^{k-1}$ is equivariantly homotopic to $\alpha$ keeping $\Delta_{f_\star}$ fixed.
\end{theorem}

Theorem \ref{th1}(a) is an immediate consequence of the case $M_\star=\emptyset$ of Theorem \ref{mainthm}(a).
The case $M_\star\ne\emptyset$ will arise from the induction step in the proof of the case $M_\star=\emptyset$. 

Theorem \ref{th1}(b) is an immediate consequence of the case $M_\star=\emptyset$ of Theorem \ref{mainthm}(b).
The case $M_\star\ne\emptyset$ is needed to deduce Theorem \ref{th1}(c).
Before we go into this deduction, let us introduce some notation.

\subsection{Immersions and fold maps}\label{fold maps}
For a space $N$ let $\Delta^{(3)}_N=\{(x,x,x)\in N^3\}$ and for a map $f\:N\to M$ let 
\[\Delta^{(3)}_f=\{(x,y,z)\in N^3\mid f(x)=f(y)=f(z),\,x\ne y,\,y\ne z,\,z\ne x\}.\]
Let $\bar\Delta_f$ be the closure of $\Delta_f$ in $N^2$ and let $\bar\Delta^{(3)}_f$ be the closure 
of $\Delta^{(3)}_f$ in $N^3$.
Let $\Sigma_f$ be the set of points $p\in N$ such that every neighborhood of $p$ contains distinct points 
$x$, $y$ satisfying $f(x)=f(y)$.
Let $\Sigma^{(3)}_f$ be the set of points $p\in N$ such that every neighborhood of $p$ contains 
pairwise distinct points $x$, $y$, $z$ satisfying $f(x)=f(y)=f(z)$.
Thus $\Delta_{\Sigma_f}=\bar\Delta_f\cap\Delta_N$ and 
$\Delta^{(3)}_{\Sigma^{(3)}_f}=\bar\Delta^{(3)}_f\cap\Delta^{(3)}_N$. 

A map $f\:N\to M$ is called a (smooth) {\it immersion} if every point of $N$ has a neighborhood that 
is (smoothly) embedded by $f$.
Clearly, $f$ is an immersion if and only if $\Sigma_f=\emptyset$.
By a {\it fold map} we mean a map $f\:N\to M$ such that $\Sigma_f^{(3)}=\emptyset$;
this includes {\it smooth fold maps}, whose only singularities are of the fold type
(= of type $\Sigma^{1,0}$, cf.\ \cite{GG}).

\begin{proof}[Proof of Theorem \ref{th1}(c)]
This proof essentially uses Appendix \ref{sec-xx}.
Let $X=f^{-1}\Big(f\big(\Sigma_f^{(3)}\big)\Big)$.
Since $f$ is generic, we may assume that $X$ is a compact polyhedron (see Theorem \ref{Shiota}) 
and that $\dim X=\dim\Sigma_f^{(3)}\le 3n-2m-2$.
Let $g\:N\to\R^k$ be a generic smooth map and let $h=f\x g\:N\to M\x\R^k$.
Since $k\ge\dim X+1$, by Theorem \ref{taking off} we may assume that $\Sigma_h$ is disjoint 
from $X$.
Thus $h$ embeds some closed neighborhood $N_\star$ of $X$ in $N$ such that $N_\star$ is a manifold with 
boundary, $N_\star$ deformation retracts onto $X$, and $N_\star=f^{-1}(M_\star)$ for some 
codimension zero submanifold $M_\star\subset M$.
Let $f_\star=f|_{N_\star}$ and $e_\star=g|_{N_\star}$.
We may further assume that $\Delta_{f_\star}$ equivariantly deformation retracts onto $Y:=\Delta_{f|_X}$.

We have $\dim Y\le\dim X\le 3n-2m-2$. 
Since $3n-2m-2<k-1$, the map $\tl g|_Y\:Y\to S^{k-1}$ is equivariantly homotopic 
to $\phi|_Y$, where $\phi\:\Delta_f\to S^{k-1}$ is the given equivariant map.
It follows that $\tl e_\star\:\Delta_{f_\star}\to S^{k-1}$ is equivariantly homotopic to 
$\phi|_{\Delta_{f_\star}}$, and consequently extends to an equivariant map
$\alpha\:\Delta_f\to S^{k-1}$ which is homotopic to $\phi$.
Let $N'$ be a codimension zero closed submanifold of $N$ containing $N\but N_\star$ and disjoint from $X$.
(If $N_\star\cap\partial N=\emptyset$, we may take $N'$ to be simply the closure of $N\but N_\star$;
otherwise $N'$ has to be slightly larger in order for $\partial N'$ to be a smooth manifold.)
Since $f$ is a fold map on $N'$, we can now apply Theorem \ref{mainthm}(b) to obtain an extension of 
$e_\star$ to a map $e\:N\to\R^k$ such that $f\x e\:N\to M\x\R^k$ is a smooth embedding (and, in fact, 
$\tl e$ is equivariantly homotopic to $\phi$). 
\end{proof}

\subsection{Simple fold maps}\label{simple fold maps}
By a {\it (smooth) simple fold map} we mean a (smooth) fold map $f\:N\to M$ that does not intersect its own folds, 
that is, $(N\x\Sigma_f)\cap\Delta_f=\emptyset$.
Clearly, (smooth) immersions are (smooth) simple fold maps, and generic PL (smooth) triple point free maps are 
(smooth) simple fold maps.
Generic PL (smooth) maps $N^n\to M^m$
\begin{itemize}
\item have no triple points when $2m\ge 3n+1$; 
\item are simple (smooth) fold maps when $2m\ge 3n$;
\item are (smooth) fold maps when $2m\ge 3n-1$, provided that $M$ and $N$ are PL manifolds rather than polyhedra 
in the PL case.
\end{itemize}

Let $E_f$ be the set of distinct pairs $(p,q)\in N\x N$ such that every neighborhood of $(p,q)$ contains pairs 
$(x,y)$ and $(x,z)$ such that $x$, $y$, $z$ are pairwise distinct and $f(x)=f(y)=f(z)$.
Let $\bar E_f$ be the closure of $E_f$ in $N^2$.
Thus $\bar\Delta^{(3)}_f\but\Delta^{(3)}_f$ is the union of the images of $\bar E_f$ under the three embeddings 
of $N\x N$ onto the thick diagonals of $N^3$.
Clearly, $\bar E_f\but E_f=\Delta_{\Sigma_f^{(3)}}$ and $\bar E_f\subset N\x\Sigma_f$. 
It is easy to see that if $f$ is a generic (PL or smooth) map, then $E_f=(N\x\Sigma_f)\cap\Delta_f$.
Thus a generic PL (smooth) map is a (smooth) simple fold map if and only if $\bar E_f=\emptyset$.

\begin{remark} Smooth simple fold maps have been studied by A. Sz\H ucs.
He calls them ``maps of singular multiplicity 2'' in \cite{Sz2} and ``$\Sigma^{1_1}$-maps'' 
(not to be confused with ``$\Sigma^1$-maps'') in \cite{Sz4} (but ``$\Sigma^{1_1}$-maps''
have a different meaning in a paper by R. Rim\'anyi and A. Sz\H ucs published in the same year 
and in later papers by A. Sz\H ucs). 
Also they would be called ``simple $\Sigma^1$-singular maps'' in direct analogy with 
the terminology of \cite{Sz3}.
\end{remark}

\begin{lemma}[Main Lemma]\label{mainlemma}
(a) Theorem \ref{mainthm}(a) holds if $f|_{N\but N_\star}$ is a simple fold map.

(b) Theorem \ref{mainthm}(b) holds if $f$ is a smooth simple fold map.
\end{lemma}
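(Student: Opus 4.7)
The plan is to follow the explicit-formula blueprint advertised in the introduction: extend $e_\star$ to a generic rough map $g_0\colon N\to\R^k$, then correct it by an ambient perturbation built from the equivariant data $\alpha$. The simple-fold hypothesis $(N\x\Sigma_f)\cap\Delta_f=\emptyset$ is the essential structural input: whenever $(x,y)\in\Delta_f$, both $x$ and $y$ lie outside $\Sigma_f$, so there exist disjoint neighborhoods of $x$ and $y$ on which $f$ is a local embedding. Consequently the only obstruction to $f\x e$ being an embedding is the pairwise double-point obstruction encoded by $\alpha$, and the double-point locus admits a consistent ``immersion-chart'' cover on which to run a local construction.

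Concretely, I would proceed as follows. Step 1: using the simple-fold hypothesis and compactness of $N$, cover a neighborhood of $\bar\Delta_f$ in $N\x N$ by finitely many pairs of disjoint immersion charts $(U_i,V_i)$, meaning $f|_{U_i}$ and $f|_{V_i}$ are embeddings and every double pair $(x,y)\in\Delta_f$ lies in some $U_i\x V_i$. Step 2: define a correction $\delta\colon N\to\R^k$ by an equivariant partition-of-unity sum of local contributions, each designed so that on $U_i\x V_i$ the difference $g_0(y)+\delta(y)-g_0(x)-\delta(x)$ points in the direction $\alpha(x,y)$ up to a controlled scalar error; for a chart pair containing no triples this reduces to extending the function $x\mapsto\tfrac12\alpha(x,q(x))$ from the (locally injective) image of $\Delta_f$ in $U_i$ to all of $U_i$, where $q(x)$ denotes the unique partner of $x$ in $V_i$. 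Step 3: near the triple-point locus the three pairwise demands $g(y)-g(x)\parallel\alpha(x,y)$, $g(z)-g(y)\parallel\alpha(y,z)$, $g(z)-g(x)\parallel\alpha(x,z)$ for $(x,y,z)\in\bar\Delta^{(3)}_f$ are generally incompatible; perturb $\delta$ on a small neighborhood of the triple-point locus in $N$, using that $\dim\bar\Delta^{(3)}_f\le 3n-2m$ and $2(m+k)\ge 3(n+1)$ together leave just enough dimensional slack to resolve the inconsistency without altering the equivariant homotopy class of $\alpha$. Step 4: set $e:=g_0+\delta$; injectivity of $f\x e$ on $N\x N$ is automatic off $\Delta_f$ and holds on $\Delta_f$ by construction (since the corrected $\tilde e$ is close to $\alpha$, hence nonzero), and compactness upgrades injectivity to a PL embedding in (a), while in (b) the immersion-chart structure ensures the differential is injective everywhere.

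The hardest step is the perturbation at triples. The simple-fold hypothesis removes fold singularities from interference with doubles but does not eliminate triples, and the pairwise formula for $\delta$ inevitably fails the mutually dependent three-point constraints there. This is precisely what the introduction flags when it remarks that the formula ``is only aware of pairs of points, and may overlook triple-point information; because of this, the homotopy given by the formula has to be slightly perturbed.'' Handling it requires a stratified argument that distinguishes the proper triples in $\Delta^{(3)}_f$ from the degeneration locus $\bar\Delta^{(3)}_f\setminus\Delta^{(3)}_f$, which approaches $\Delta^{(3)}_N$ and must be coordinated with the behaviour of $f$ near $\Sigma_f$ via a regular-neighborhood argument in the PL case and a tubular-neighborhood argument in the smooth case; the full strength of $2(m+k)\ge 3(n+1)$ is consumed at exactly this point.
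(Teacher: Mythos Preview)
Your proposal has a genuine gap: it misses the localization step that makes the dimension count work.

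In Step 2 you want to define $\delta$ on the full image $\pi(\Delta_f)\subset N$ via a partition of unity over chart pairs. But a point $x$ in the triple-point image has several partners $y_1,y_2,\dots$, and the partition-of-unity average of the vectors $\alpha(x,y_j)$ may well be zero (two of them can be antipodal). So after Step~2, $f\times(g_0+\delta)$ need not be injective, and you have no control on the dimension of the surviving bad set beyond ``it sits near the triple locus,'' i.e.\ dimension up to $3n-2m$. In Step~3 you then invoke $2(m+k)\ge 3(n+1)$ to perturb away this failure, but that inequality does \emph{not} give enough room: to separate the sheets over the triple image by a generic perturbation in $\R^k$ you would need roughly $3n-2m<k-1$, which is strictly stronger than the hypothesis (it fails already for $m=n$, $k=\lceil\tfrac{n+3}{2}\rceil$). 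So Step~3 as stated cannot be carried out.

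The paper's proof avoids this by never attempting to realize $\alpha$ on all of $\Delta_f$. Instead it takes the linear homotopy $\Phi$ in $\R^k$ from $(x,y)\mapsto g(y)-g(x)$ to an isovariant map in the $\alpha$-direction, and lets $D\subset\bar\Delta_f$ be the projection of $\Phi^{-1}(0)$. The point is that $D$ has dimension at most $2n-m-k+1$, which under the hypothesis is $<n/2$; this is so small that (i) the projection $\pi\colon\bar\Delta_f\to N$ embeds $D$ (using the simple-fold hypothesis, which makes $\pi$ an immersion), and (ii) the part of $D$ involved in triples has dimension $<k-1$, so the perturbation you wanted in Step~3 now \emph{does} go through by elementary general position. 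The explicit midpoint formula $e_t(x)=\tfrac12\big(g(x)+g(y)-\Phi(x,y,t)\big)$ is then applied only on $\pi(U)$ for a neighborhood $U$ of $D$ on which $\pi$ is injective (so the partner $y$ is unique and no partition of unity is needed), and the bulk of the remaining work is a careful nested-neighborhood argument ($V^\flat\subset V\subset V^\sharp$, $W^\flat\subset W\subset W^\sharp$) ensuring that the modification on $\pi(U)$ does not create new double points against $N\setminus\pi(U)$. Your sketch does not address this last interaction at all; in the paper it occupies most of \S\ref{sec2}.
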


Lemma \ref{mainlemma}(a) is proved in \S\ref{sec2} and Lemma \ref{mainlemma}(b) is proved in \S\ref{sec3}
by building on the proof of Lemma \ref{mainlemma}(a).
A sketch of the proof, which applies to both PL and smooth cases, is given in \S\ref{sketch}.

\begin{remark}
The full strength of Lemma \ref{mainlemma} will be needed to prove Theorem \ref{mainthm}(b).
To prove Theorem \ref{mainthm}(a), we need Lemma \ref{mainlemma}(a) only in the cases where $f|_{N\but N_\star}$ is 
either (i) a triple-point free map or (ii) a trivial covering.
It is not hard to prove the case (i) directly, which we will do (see Proposition \ref{3pt-free}) since it helps
to understand the general position assumptions that are really needed for Theorem \ref{mainthm}(a)
(see Remark \ref{gen-pos}).
However, the case (ii) is not so far from the general case (of simple fold maps), at least from 
the conceptual viewpoint.
From the technical viewpoint, the verification of ``being not so far'' does take some efforts (which are not 
in vain since they are anyway needed for the smooth case).
One reason to be careful about this verification is that $f$ may have complicated behavior near $N_\star$ 
(since it is assumed to be a simple fold map only over $N\but N_\star$), but that is also an issue in the case (ii) 
(since in that case $f$ is assumed to be a covering only over $N\but N_\star$) and so in the case (ii) one would 
anyway need to be careful about some verification of this sort.
\end{remark}

\subsection{A lemma on isovariant maps}
A triangulation of a polyhedron $P$ with a PL action of a finite group $G$ is called {\it equivariant} if 
the action is by simplicial maps and the stabilizer of each simplex equals the stabilizer of each interior point 
of this simplex; this implies, in particular, that the fixed point set $P^G$ is triangulated by a subcomplex.

An {\it isovariant} map is an equivariant map $f\:X\to Y$ which does not increase stabilizers of points, i.e.\
the stabilizer of each $x\in X$ contains the stabilizer of $f(x)$.
(Let us note that the reverse inclusion also holds since $f$ is equivariant.)
In particular, an equivariant map $\phi$ of a $\Z/2$-space $X$ into $\R^k$ with $\Z/2$ acting by the central symmetry
$x\mapsto -x$ is isovariant if and only if $\phi^{-1}(0)$ coincides with the fixed point set $X^{\Z/2}$.

\begin{lemma} \label{isovariant PL approximation}
Let $P$ be a compact $\Z/2$-polyhedron and let $Q$ be an invariant closed subpolyhedron of $P$.
Let $\beta\:P\to\R^k$ be an isovariant map such that $\beta|_Q$ is a PL map.
Then $\beta$ is isovariantly homotopic keeping $Q$ fixed to a PL map.
\end{lemma}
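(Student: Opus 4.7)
My plan is to build the PL approximation on an equivariant cylindrical regular neighborhood $W$ of the fixed set $F:=P^{\Z/2}$ and on its complement, then glue along $\partial W$. I take an equivariant triangulation $K$ of $P$ with $Q$ and $F$ as subcomplexes, fine enough that $\beta|_Q$ is simplicial. Because $\beta$ is isovariant and $(\R^k)^{\Z/2}=\{0\}$, the restriction $\beta|_F$ is identically zero, hence PL, so $\beta|_{Q\cup F}$ is PL; since a homotopy rel $Q\cup F$ is in particular rel $Q$, I may replace $Q$ by $Q\cup F$ and assume $F\subset Q$. After an equivariant derived subdivision, let $W:=\st(F)$ be the closed equivariant simplicial regular neighborhood of $F$. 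Then $W$ has the mapping-cylinder structure
\[
W = L\x[0,1]\big/\bigl((y,0)\sim\pi(y)\bigr),
\]
where $L:=\lk(F)$ carries a free $\Z/2$-action, $\pi\:L\to F$ is an equivariant simplicial map, and $W\cap Q$ is a $\Z/2$-invariant subpolyhedron of $W$ containing $F$; set $L_Q:=L\cap Q$.

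\smallskip
\textbf{Approximation outside $W$.} On the compact set $P\setminus\Int W$, which is disjoint from $F$, the $\Z/2$-action is free and $\beta$ is bounded away from $0$. Standard equivariant PL approximation rel $Q\cap(P\setminus\Int W)$ yields an equivariant PL map $\gamma\:P\setminus\Int W\to\R^k\setminus\{0\}$ agreeing with $\beta$ on $Q$, with the straight-line homotopy from $\beta$ to $\gamma$ remaining in $\R^k\setminus\{0\}$ throughout (hence isovariant, vacuously, since this region avoids $F$). In particular, $\gamma_L:=\gamma|_L$ is an equivariant PL map $L\to\R^k\setminus\{0\}$ equal to $\beta|_L$ on $L_Q$ and close to $\beta|_L$ in sup norm.

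\smallskip
\textbf{The key reparameterization.} The central difficulty, and the main obstacle of the proof, is maintaining isovariance along the homotopy: a naive straight line $(1-s)\beta+s\gamma$ may develop spurious zeros outside $F$ arbitrarily near the fixed set. The crucial tool for avoiding this is the \emph{isovariant reparameterization}
\[
H_s(y,t)\;:=\;\frac{t}{t(1-s)+s}\,\beta\bigl(y,\,t(1-s)+s\bigr),\qquad H_s|_F:=0.
\]
A direct check verifies that each $H_s$ is continuous (including at $F$), equivariant, and isovariant: the denominator $t(1-s)+s$ vanishes only at $s=t=0$, so $H_s(y,t)\ne 0$ whenever $t>0$. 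Moreover $H_0=\beta|_W$, $H_1(y,t)=t\beta|_L(y)$, and $H_s|_L=\beta|_L$ for every $s$.

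\smallskip
\textbf{Inside $W$ and gluing.} To make the homotopy rel $W\cap Q$, I substitute $s\mapsto s\chi(y,t)$ for a PL equivariant cutoff $\chi\:W\to[0,1]$ vanishing on $W\cap Q$ and equal to $1$ outside a small equivariant collar of $W\cap Q$ in $W$; the isovariance check survives since for each fixed $(y,t)$ the value $s\chi(y,t)\in[0,1]$ still plugs into an isovariant $H_\bullet(y,t)$, and at $\chi=0$ the formula returns $\beta$. A further isovariant straight-line homotopy then replaces $\beta|_L$ by $\gamma_L$ in the formula — valid because $\gamma_L$ is close to $\beta|_L$ in the open equivariant subset $\R^k\setminus\{0\}$, so convex combinations avoid $0$ and multiplication by $t$ preserves isovariance. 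The end-map equals $t\gamma_L(y)$ off the collar and $\beta$ on $W\cap Q$; a final close equivariant PL approximation of $\beta$ on the collar, done by linear extension from vertex values in a sufficiently fine equivariant triangulation, makes the whole map PL, and for fine enough triangulation it remains isovariant and close enough to the previous step that the straight-line homotopy to it stays isovariant. Gluing with $\gamma$ on $P\setminus\Int W$ along $L$ yields the desired PL isovariant map, and concatenating all the homotopies produces the isovariant homotopy from $\beta$ to a PL map rel $Q$.
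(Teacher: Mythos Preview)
Your reparameterization $H_s$ is a nice explicit device for straightening $\beta$ into a cone near $F$, and the overall strategy (cone near $F$, standard approximation away from $F$) is sound. However, the final step contains a genuine gap.

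After the $\chi$-cutoff, the end-map on the collar region $N\setminus(W\cap Q)$ is still a non-PL isovariant map, and this region can accumulate on $F$ (since $F\subset W\cap Q$ and $N$ is a neighborhood of $W\cap Q$; the frontier of $W\cap Q$ in $W$ may well meet $F$, e.g.\ when $Q=F$). Your claim that ``for fine enough triangulation it remains isovariant'' is precisely the nontrivial content of the lemma and is false as stated: for an isovariant $\beta$ whose direction $\beta/|\beta|$ oscillates near $F$ (say $\beta(x,y)=x\bigl(\cos(y/|x|),\sin(y/|x|)\bigr)$ on $[-1,1]\times[0,1]$ with the involution $(x,y)\mapsto(-x,y)$ and $k=2$), any uniformly fine equivariant triangulation will contain simplices close to $F$ whose vertex values point in nearly antipodal directions, so the linear extension hits $0$ off $F$. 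No single mesh size works; the required fineness blows up as one approaches $F$. Thus your last approximation step is circular: it re-invokes the lemma on the collar.

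The paper's proof sidesteps this by a structural choice of triangulation rather than a reparameterization: one subdivides finely only on the subcomplex $B$ of simplices disjoint from $F$, leaving the simplices meeting $F$ untouched. Then every simplex meeting $F$ is a join $S*T$ with $S\subset F$ and $T$ a small simplex of the refined $B$, hence at a \emph{fixed} positive distance from $F$; there $|\beta|$ is uniformly bounded below, so fine enough $T$ keeps the convex hull of vertex values away from $0$. The resulting simplicial map is automatically ``conical'' near $F$, and the homotopy to it is linear on $B$ and an Alexander trick on the star of $F$. Your $H_s$ could be made to work, but only after arranging that the region needing naive approximation stays a definite distance from $F$; the cutoff $\chi$ as written does not achieve this.
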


\begin{proof}
Let $A$ be an equivariant triangulation of $P$ such that $Q$ is triangulated by a subcomplex of $A$.
Let $B$ be the union of all closed simplexes of $A$ that do not intersect $P^{\Z/2}$.
Let $B'$ be an equivariant subdivision of $B$ into sufficiently small simplexes, and let us 
extend it to an equivariant subdivision $A'$ of $A$ without adding new vertices.
Let $C$ be the simplicial neighborhood of $P^{\Z/2}$ in $A'$.
Let us define $\phi$ to coincide with $\beta$ on the vertices of $A'$ and by extending
linearly to the simplexes of $A'$.
Then it is easy to see that $\phi$ is isovariant.
(Every simplex of $C$ is a join of a simplex $S$ of $P^{\Z/2}$ and a simplex $T$ of $B'$.
Then $\phi(S)=0$ and $\phi(T)\subset\R^k\but 0$ by our choice of $B'$. 
Hence $\phi^{-1}(0)\cap (S*T)=S$.)
Also, $\phi$ is ``conical'' on $C$ in the sense that it is the composition of the quotient map 
$C\to C/P^{\Z/2}$ and a conical map of the cone $C/P^{\Z/2}$ over $\partial C$ into the infinite cone $\R^k$.

Now $\beta|_B$ is isovariantly homotopic to $\phi|_B$ by the linear homotopy $(1-t)\beta|_B+t\phi|_B$, which keeps $Q$ fixed.
Therefore $\beta$ is isovariantly homotopic keeping $Q$ fixed to a map $\gamma$ such that $\gamma|_B=\phi|_B$.
Finally, similarly to the Alexander trick, $\gamma$ is isovariantly homotopic to $\phi$ by a homotopy $\gamma_t$,
$t\in [0,1]$, keeping $B\cup Q$ fixed and such that each $\gamma_t$ coincides with $\gamma$ 
on $C\but tC$ for a certain smaller neighborhood $tC$ of $P^{\Z/2}$, and is ``conical'' on $tC$.
\end{proof}

\subsection{The triple point free case}
In the case where $f|_{N\but N_\star}$ is a simple fold map {\it without triple points}, Theorem \ref{mainthm}(a) 
can be proved by a much simpler construction, which works without any hypotheses on dimensions and genericity:

\begin{proposition} \label{3pt-free}
Let $M^m$ be a PL manifold and $N^n$ a compact polyhedron, and let $M_\star$, $N_\star$ be closed subpolyhedra
of $M$ and $N$ respectively.
Let $f\:N\to M$ be a PL map such that $f^{-1}(M_\star)=N_\star$ and $f|_{N\but N_\star}$ is a simple 
fold map without triple points, and let $f_\star=f|_{N_\star}$.

Suppose that $e_\star\:N_\star\to\R^k$ is a PL map such that $f_\star\x e_\star\:N_\star\to M_\star\x\R^k$ is 
an embedding and $\tl e_\star\:\Delta_{f_\star}\to S^{k-1}$ extends to an equivariant map
$\alpha\:\Delta_f\to S^{k-1}$.

Then $e_\star$ extends to a PL map $e\:N\to\R^k$ such that $f\x e\:N\to M\x\R^k$ is 
an embedding and $\tl e\:\Delta_f\to S^{k-1}$ is equivariantly homotopic to $\alpha$ 
keeping $\Delta_{f_\star}$ fixed.
\end{proposition}

Let us emphasize that $f$ is not assumed to be generic here, and $m$, $n$ and $k$ are arbitrary.
Let us note that a generic PL map without triple points is automatically a simple fold map.

\begin{proof} We are given $e_\star$ and $\alpha$ and we need to construct $e$.
Let $f_\circ=f|_{N\but N_\star}$.
Let us note that $\Delta_{f_\star}=\Delta_f\but\Delta_{f_\circ}$, but 
$\bar\Delta_f^\star:=\bar\Delta_f\but\bar\Delta_{f_\circ}$ may be larger than $\bar\Delta_{f_\star}$. 
Clearly, $\bar\Delta_f^\star=\bar\Delta_f\cap (N_\star\x N_\star)$.

Let $S_\star\:\bar\Delta_f^\star\to\R^k$ be defined by $S_\star(x,y)=\frac{e_\star(x)+e_\star(y)}2$, and let
$S\:\bar\Delta_f\to\R^k$ be an arbitrary PL map extending $S_\star$ and satisfying $S(x,y)=S(y,x)$.
Let $A_\star\:\bar\Delta_f^\star\to\R^k$ and $a_\star\:\bar\Delta_f^\star\to[0,\infty)$ be defined by 
$A_\star(x,y)=\frac{e_\star(y)-e_\star(x)}2$ and $a_\star(x,y)=||A_\star(x,y)||$, and let
$a\:\bar\Delta_f\to[0,\infty)$ be an arbitrary PL map extending $a_\star$ and satisfying $a(x,y)=a(y,x)$ and
$a(x,y)=0\Leftrightarrow x=y$.
Let us define $A\:\bar\Delta_f\to\R^k$ by $A(x,y)=\alpha(x,y)\cdot a(x,y)$ for $x\ne y$ and by $A(x,x)=0$.
Then $A$ is isovariant and 
$A(x,y)=\frac{e_\star(y)-e_\star(x)}{||e_\star(y)-e_\star(x)||}\cdot\frac{||e_\star(y)-e_\star(x)||}2=A_\star(x,y)$ 
for all $(x,y)\in\Delta_{f_\star}$.
Since $A(x,x)=0$, we have $A|_{\bar\Delta_f^\star}=A_\star$.
Since $A_\star$ is a PL map, by Lemma \ref{isovariant PL approximation} $A$ is isovariantly homotopic keeping 
$\bar\Delta_f^\star$ fixed to a PL map $A'\:\bar\Delta_f\to\R^k$.

Let us define $\phi\:\bar\Delta_f\to\R^k$ by $\phi(x,y)=S(x,y)+A'(x,y)$.
Then $\phi$ is a PL map and $\phi(x,y)=S_\star(x,y)+A_\star(x,y)=e_\star(y)$ for $(x,y)\in\bar\Delta_f^\star$.
Let $\pi\:N\x N\to N$ be the projection onto the second factor.
Since $f_\circ$ is a simple fold map without triple points, $\pi$ embeds $\bar\Delta_{f_\circ}$. 
Let us define $e_1\:N_\star\cup\pi(\bar\Delta_f)\to\R^k$ by $e_1(y)=e_\star(y)$ for $y\in N_\star$ and by
$e_1(y)=\phi(x,y)$ for $(x,y)\in\bar\Delta_f$.
Let us note that the latter definition agrees with the former when $(x,y)\in\bar\Delta_f^\star$,
and it follows that $e_1$ is well-defined.
Let $e\:N\to\R^k$ be any PL map extending $e_1$.
Since $f\x e\:N\to M\x\R^k$ embeds $N_\star$ and $\pi(\Delta_{f_\circ})$, it is an embedding.
We have \[\tl e(x,y)=\frac{e(y)-e(x)}{||e(y)-e(x)||}=\frac{\phi(x,y)-\phi(y,x)}{||\phi(x,y)-\phi(y,x)||}
=\frac{A'(x,y)-A'(y,x)}{||A'(x,y)-A'(y,x)||}\]
and
\[\frac{A(x,y)-A(y,x)}{||A(x,y)-A(y,x)||}=\frac{\alpha(x,y)-\alpha(y,x)}{||\alpha(x,y)-\alpha(y,x)||}=
\frac{2\alpha(x,y)}{||2\alpha(x,y)||}=\alpha(x,y)\]
for all $(x,y)\in\Delta_f$.
Since $A$ is isovariantly homotopic to $A'$ keeping $\bar\Delta_f^\star$ fixed, it follows that
$\tl e$ is equivariantly homotopic to $\alpha$ keeping $\Delta_{f_\star}$ fixed.
\end{proof}

\subsection{Proof of Main Theorem modulo Main Lemma: PL case}

\begin{proof}[Proof of Theorem \ref{mainthm}(a)] Let $K$ and $L$ be triangulations of $N$ and $M$ such that 
$f\:K\to L$ is simplicial and $N_\star$ and $M_\star$ are triangulated by subcomplexes.
Let $\bar D_3$ be the image of the composition $\bar\Delta^{(3)}_f\subset N^3\to N$ of the inclusion and 
the projection onto the first factor.
Let $E$ be the image of the composition $(\Sigma_f\x N)\cap\Delta_f\subset N^2\to N$ of the inclusion and
the projection onto the first factor.
Since $f$ is generic on $N\but N_\star$, we may assume that $f|_{N\but N_\star}$ is non-degenerate, that
$\bar D_3\cap (N\but N_\star)$ is of dimension $\le 3n-2m$ and that
$E\cap (N\but N_\star)$ is of dimension $\le 3n-2m-1$ (in fact, $\le 3n-2m$ would already suffice for our 
purposes).

Let $\sigma_1,\dots,\sigma_r$ be the simplexes of $f(\bar D_3\cup E)$ not contained in $M_\star$, arranged 
in an order of increasing dimension.
Let $L_i=M_\star\cup\sigma_1\cup\dots\cup\sigma_i$, and let $K_i=f^{-1}(L_i)$.
Let $f_i\:K_i\to L_i$ be the restriction of $f$. 
Thus $f_0=f_\star$.

Let $e_0=e_\star$.
Suppose more generally that $e_i\:K_i\to\R^k$ is a PL map such that $f_i\x e_i\:K_i\to L_i\x\R^k$ is 
an embedding and $\tl e_i\:\Delta_{f_i}\to S^{k-1}$ is equivariantly homotopic to 
$\alpha|_{\Delta_{f_i}}$ by a homotopy $h_i\:\Delta_{f_i}\x I\to S^{k-1}$ that keeps 
$\Delta_{f_\star}$ fixed.

Let $(Q,Q_\star)=f^{-1}(\sigma_{i+1},\partial\sigma_{i+1})$ and let $(F,F_\star)=f|_{(Q,Q_\star)}$ and 
$E_\star=e_i|_{Q_\star}$.
Since $f|_{N\but N_\star}$ is non-degenerate, $f$ sends every simplex of $K$ not contained in $N_\star$ 
homeomorphically onto some simplex of $L$.
Hence the preimage of every open $k$-simplex of $L$ not contained in $M_\star$ is 
a disjoint union of open $k$-simplexes of $K$.
In particular, $F$ restricts to a trivial finite-fold (possibly 0-fold) covering
$Q\but Q_\star=K_{i+1}\but K_i\to\sigma_{i+1}\but\partial\sigma_{i+1}=L_{i+1}\but L_i$.
The map $F_\star\x E_\star\:Q_\star\to\partial\sigma_{i+1}\x\R^k$ is an embedding, and
$\tl E_\star\:\Delta_{F_\star}\to S^{k-1}$ extends to an equivariant map $\Phi\:\Delta_F\to S^{k-1}$
given by $\alpha|_{\Delta_F}$ along with $H_\star:=h_i|_{\Delta_{F_\star}\x I}$, using that the inclusion
$\Delta_{F_\star}\subset\Delta_F$ is a cofibration.%
\footnote{If $F\:Q\to X$ is a PL map, and $R$ is a closed subpolyhedron of $Q$, then $\tl R$ is
a closed subpolyhedron of $\tl Q$, and consequently $\Delta_{F|_R}=(F\x F)^{-1}(\Delta_X)\cap\tl R$ 
is a closed subpolyhedron of $\Delta_F=(F\x F)^{-1}(\Delta_X)\cap\tl Q$.}
Since every covering is an immersion, and hence a simple fold map, and since every $s$ such that $s\le 3n-2m$ 
satisfies $s+k\ge\frac32(s+1)$, by Lemma \ref{mainlemma}(a) $E_\star$ extends to a PL map $E\:Q\to\R^k$ such that 
$F\x E\:Q\to\sigma_{i+1}\x\R^k$ is an embedding and $\tl E\:\Delta_F\to S^{k-1}$ is 
equivariantly homotopic to $\Phi$ keeping $\Delta_{F_\star}$ fixed.
Then $\tl E$ is equivariantly homotopic to $\alpha|_{\Delta_F}$ by a homotopy $H$
that extends $H_\star$.

Clearly, $\Delta_{f_{i+1}}=\Delta_{f_i}\cup_{\Delta_{F_\star}}\Delta_F$.
Thus $e_{i+1}:=e_i\cup_{E_\star} E$ is a PL map $K_{i+1}\to\R^k$ such that 
$f_{i+1}\x e_{i+1}\:K_{i+1}\to L_{i+1}\x\R^k$ 
is an embedding and $\tl e_{i+1}\:\Delta_{f_{i+1}}\to S^{k-1}$ is equivariantly homotopic to 
$\alpha|_{\Delta_{f_{i+1}}}$ by the homotopy $h_{i+1}:=h_i\cup_{H_\star}H$ which obviously keeps 
$\Delta_{f_\star}$ fixed. 

Finally, we have $L_r=M_\star\cup f(\bar D_3\cup E)$ and consequently $K_r\supset N_\star\cup\bar D_3\cup E$.
Hence $f|_{N\but K_r}$ is a simple fold map without triple points, and we can apply Proposition \ref{3pt-free}.
\end{proof}

\begin{remark} \label{gen-pos}
It is clear from the proof of Theorem \ref{mainthm}(a) that the hypothesis ``$f$ is generic''
in Theorem \ref{th1}(a) can be replaced by the conjunction of the following conditions:
\begin{enumerate}
\item $f$ is non-degenerate;
\item $\dim\bar\Delta^{(3)}_f\le 3n-2m$;
\item $\dim(\Sigma_f\x N)\cap\Delta_f\le 3n-2m-1$.
\end{enumerate}
The same conditions but with $f|_{N\but N_\star}$ in place of $f$ work to replace the hypothesis 
``$f$ is generic on $N\but N_\star$'' in Theorem \ref{mainthm}(a).
\end{remark}

\begin{remark} The proof of Theorem \ref{mainthm}(a) modulo Lemma \ref{mainlemma}(a) almost looks like
an induction on Thom--Boardman (or at least Morin) strata (which gets stuck at the penultimate step).
Nevertheless, it remains a challenge to make such an induction work in the smooth case.
\end{remark}

\subsection{Proof of Main Theorem modulo Main Lemma: Smooth case}
In order to prove the smooth case of the Main Theorem, we will actually need a slightly strengthened 
version of the smooth case of the Main Lemma.
It is also convenient to state the analogous strengthening of the PL case.

\begin{addendum}[to Main Lemma] \label{mainlemma'}  
(a) Let $M$, $N$, $f$, $M_\star$, $N_\star$ and $f_\star$ be as in Theorem \ref{mainthm}(a).
Let $X$ be the union of $\Delta_{f_\star}$ and $(\Sigma_f\x N\cup N\x\Sigma_f)\cap\Delta_f$. 
Suppose that $g\:N\to\R^k$ is a PL map, generic on $N\but N_\star$ and such that 
$f_\star\x (g|_{N_\star})\:N_\star\to M_\star\x\R^k$ is an embedding, $\Delta_{f\x g}$ is disjoint from $X$, 
and the map $\tl g\:X\to S^{k-1}$ defined by $\tl g(x,y)=\frac{g(y)-g(x)}{||g(y)-g(x)||}$ extends to 
an equivariant map $\alpha\:\Delta_f\to S^{k-1}$.
Then $g$ is homotopic keeping $N_\star$ fixed to a PL map $e\:N\to\R^k$ such that $f\x e\:N\to M\x\R^k$ is 
an embedding and $\tl e\:\Delta_f\to S^{k-1}$ is equivariantly homotopic to $\alpha$ keeping 
$\Delta_{f_\star}$ fixed.
\smallskip

(b) Let $M$, $N$, $f$, $M_\star$, $N_\star$ and $f_\star$ be as in Theorem \ref{mainthm}(b).
Let $X$ be the union of $\Delta_{f_\star}$ and $(\Sigma_f\x N\cup N\x\Sigma_f)\cap\Delta_f$. 
Suppose that $g\:N\to\R^k$ is a generic smooth map such that 
$f_\star\x (g|_{N_\star})\:N_\star\to M_\star\x\R^k$ is a smooth embedding,
$\Delta_{f\x g}$ is disjoint from $X$, and the map $\tl g\:X\to S^{k-1}$ defined by 
$\tl g(x,y)=\frac{g(y)-g(x)}{||g(y)-g(x)||}$ extends to an equivariant map $\alpha\:\Delta_f\to S^{k-1}$.
Then $g$ is homotopic keeping $N_\star$ fixed to a map $e\:N\to\R^k$ such that $f\x e\:N\to M\x\R^k$ is 
a smooth embedding and $\tl e\:\Delta_f\to S^{k-1}$ is equivariantly homotopic to $\alpha$ 
keeping $\Delta_{f_\star}$ fixed.
\end{addendum}

Let us note that Lemma \ref{mainlemma} is a trivial consequence of Addendum \ref{mainlemma'}:
if $f$ is a simple fold map, then $(\Sigma_f\x N)\cap\Delta_f=\emptyset$, so $X=\Delta_{f_\star}$.

\begin{proof}[Proof of Theorem \ref{mainthm}(b)]
Since $f$ is a fold map, $S:=(\Sigma_f\x N)\cap\Delta_f$ is a compact subset of $N\x N\but\Delta_N$.
Let $\pi$ be the projection of $N\x N$ to the first factor and $t$ be the factor exchanging involution
of $N\x N$.
Let $N_0=\pi\big(S\cup t(S)\big)\cup N_\star$ and $f_0=f|_{N_0}$.

Since $f$ is generic, $\dim S\le 3m-2n$ (in fact, $\dim S\le 3m-2n-1$, but we do not need this).
Then it follows from Theorem \ref{mainthm}(a) and the triangulation theorem (see Theorem \ref{Shiota}) that 
the given smooth map $e_\star\:N_\star\to\R^k$ extends to a continuous map $e_0\:N_0\to\R^k$ such that 
$f_0\x e_0\:N_0\to M\x\R^k$ is a topological embedding and $\tl e_0\:\Delta_{f_0}\to S^{k-1}$ is 
equivariantly homotopic to $\alpha|_{\Delta_{f_0}}$ keeping $\Delta_{f_\star}$ fixed.

Let $g\:N\to\R^k$ be a generic smooth map such that $g|_{N_\star}=e_\star$ and $g|_{N_0}$ 
is sufficiently $C^0$-close to $e_0$, and let $h=f\x g\:N\to M\x\R^k$.
Since $e_0$ is injective and $S$ is a compact subset of $N\x N\but\Delta_N$, we may assume that 
$\bar\Delta_h$ is disjoint from $X:=S\cup t(S)\cup\Delta_{f_\star}$ and that $\tl g\:X\to S^{k-1}$ 
is equivariantly homotopic to $\alpha|_X$ keeping $\Delta_{f_\star}$ fixed.
Then $\tl g$ extends to an equivariant map $\alpha'\:\Delta_f\to S^{k-1}$ which is equivariantly 
homotopic to $\alpha$ keeping $\Delta_{f_\star}$ fixed.

Then by Addendum \ref{mainlemma'}(b) $g$ is homotopic keeping $N_\star$ fixed to a map $e\:N\to\R^k$ 
such that $f\x e\:N\to M\x\R^k$ is a smooth embedding and $\tl e\:\Delta_f\to S^{k-1}$ is equivariantly 
homotopic to $\alpha$ keeping $\Delta_{f_\star}$ fixed.
\end{proof}

\subsection{Proof of Main Lemma: Sketch} \label{sketch}
Here is an outline of the proof of Lemma \ref{mainlemma}.
In this outline, we will assume that $M_\star=\emptyset$ in order to simplify notation.

Let us pick a generic lift $h=f\x g\:N\to M\x\R^k$ of the given map $f\:N\to M$.
Let us define $\bar g\:\Delta_f\to\R^k$ by $(x,y)\mapsto g(y)-g(x)$.
Using the hypothesis, it is not hard to construct a generic equivariant homotopy $\Phi$ from
$\bar g$ to a map into $\R^k\but\{0\}$.
Let $D_\ccirc$ be the image of $\Phi^{-1}(0)\subset\Delta_f\x I$ under the projection $\Delta_f\x I\to\Delta_f$ 
and let $D$ be the closure of $D_\ccirc$ in $\bar\Delta_f$.
Then $D$ is of a small dimension (namely, $2n+1-m-k$, which is less than $n/2$) and therefore the projection
$\pi\:\bar\Delta_f\subset N\x N\to N$ embeds $D$.
Also, since $f$ is a simple fold map, $\pi$ is an immersion, and hence embeds a closed
invariant neighborhood $U$ of $D$.
Our plan is to homotope $h$ to an embedding by a homotopy lying over $f$, with support in $\pi(U)$,
using the configuration level homotopy $\Phi$ (restricted over $U$) and the homeomorphism $\pi|_U$ between 
$U$ and $\pi(U)$.
A preliminary version $\xi_t=f\x e_t$ of the desired homotopy is given by an explicit formula involving 
$\xi_0=h|_{\pi(U)}$ and $\Phi$, and does eliminate the existing double points of $h$; 
thus $\xi_1$ embeds $\pi(U)$.
But, of course, this formula is unaware that $f$ may have triple and 10-tuple points, and 
because of this $\xi_t$ may accidentally create new double points between $\pi(U)$ and $N\but\pi(U)$.
However, since $D$ is of small dimension, by a slight perturbation of $\xi_t$ we can at least 
ensure that new double points do not occur between $\pi(D)$ and $N\but\pi(D)$.
Then they also do not occur between a small neighborhood $\pi(V^\sharp)$ of $\pi(D)$
in $\pi(U)$ and $N\but\pi(V^\sharp)$, and (using again that $f$ is a simple fold map) 
even between a neighborhood $W^\sharp$ of $\pi(D)$ in $N$ and $N\but W^\sharp$.
With enough care this remains true (even with the same $V^\sharp$ and $W^\sharp$, so that we don't get 
a circularity!) after we amend $\Phi$ so that it has support in $V^\sharp$.

\begin{remark} One naturally wonders if it is not excessive to first define a preliminary version $\xi_t$ of 
the homotopy and then amend%
\footnote{Here we are concerned only with amendments that are global, at least on some scale;
perturbations into general position do not count as ``amendments'' here.}
it (along with $\Phi$): isn't it possible to define $\xi_t$ just once, after an appropriate preparation?
The amendment is based on the fact that the images of $\pi(D)$ (of dimension $<n/2$) and $N\but\pi(D)$ 
(of dimension $n$) in $M\x\R^k$ (of dimension $m+k\ge 3(n+1)/2$) do not intersect in general position
(while lying over $f$). 
If they happen to remain disjoint in general position even after projecting to $M$, then we could indeed define 
$\xi_t$ just once; but this assumption (namely, $m\ge 3n/2$) would be too restrictive.

We could actually do as follows: first define $\xi_t$ on $\pi(D)$ (by explicit formula), then perturb it into
general position (along with $\Phi$), and finally extend it over $\pi(U)$ (by explicit formula).
While this seems to be the most logical order of actions, it would require analyzing properties of 
the construction by explicit formula two times (separately for $\pi(D)$ and for $\pi(U)$), even though 
the formula is the same.
So it is really in order to avoid this repeated analysis that we define $\xi_t$ twice on the entire $\pi(U)$. 
\end{remark}

\section{Proof of Main Lemma: PL case} \label{sec2}

Here is the proof of Lemma \ref{mainlemma}(a), split into a number of steps.

\subsection{Construction of $D$}
Let us write $N_\circ=N\but N_\star$, $M_\circ=M\but M_\star$ and $f_\circ=f|_{N_\circ}$.
Since $f$ is generic on $N_\circ$, we may assume that $\dim\Delta_{f_\circ}\le 2n-m$.
Let $d=2n-m-k+1$.
Let $g\:N\to\R^k$ be a PL map extending $e_\star$ and generic on $N_\circ$, and let
$h=f\x g\:N\to M\x\R^k$.
Since $2n-m<k+d$, $g$ lifts (with respect to the projection $\R^{k+d}\to\R^d$) to a PL map $G\:N\to\R^{k+d}$
extending the composition $N_\star\xr{e_\star}\R^k\subset\R^{k+d}$ and such that 
$f\x G\:N\to M\x\R^{k+d}$ is an embedding.
Since $g$ is generic on $N_\circ$ and $g|_{N_\star}=e_\star$, we may assume that $\Delta_h$ is a subpolyhedron 
of codimension $\ge k$ in $\Delta_{f_\circ}$.

Since $f$ is PL, $\bar\Delta_f$ is a closed subpolyhedron of $N\x N$ and $\Sigma_f$ is 
a closed subpolyhedron of $N$.
Let us note that $\bar\Delta_f^\star:=\bar\Delta_f\but\bar\Delta_{f_\circ}$ may be 
larger than $\bar\Delta_{f_\star}$, even though $\Delta_{f_\star}=\Delta_f\but\Delta_{f_\circ}$.
(Clearly, $\bar\Delta_f^\star=\bar\Delta_f\cap (N_\star\x N_\star)$.)
Let us define an isovariant map $\beta\:\bar\Delta_f\to\R^k$ by sending 
$\Delta_{\Sigma_f}$ to $0$ and by $(x,y)\mapsto\alpha(x,y)\cdot ||G(y)-G(x)||$ for $x\ne y$.
Let us note that $\beta(x,y)=\frac{g(y)-g(x)}{||g(y)-g(x)||}\cdot||g(y)-g(x)||=g(y)-g(x)$ 
for $(x,y)\in\bar\Delta_f^\star$.
In particular, $\beta|_{\bar\Delta_f^\star}$ is a PL map.

By Lemma \ref{isovariant PL approximation} $\beta$ is isovariantly homotopic keeping $\bar\Delta_f^\star$ 
fixed to a PL map $\phi\:\bar\Delta_f\to\R^k$.
By equivariantly perturbing the images of the vertices of an equivariant triangulation of $\bar\Delta_f$
such that $\phi$ is linear on its simplexes we may assume that $\phi$ is generic on $\bar\Delta_{f_\circ}$
(as an equivariant map).
The equivariant map $\bar\Delta_f\to\R^k$ defined by $(x,y)\mapsto g(y)-g(x)$ is equivariantly homotopic to 
$\phi$ by the linear homotopy $\Phi\:\bar\Delta_f\x I\to\R^k$, which keeps $\bar\Delta_f^\star$ fixed.
Thus we have $\Phi(x,y,t)=g(y)-g(x)$ for all 
$(x,y,t)\in\bar\Delta_f\x\{0\}\cup\bar\Delta_f^\star\x I$ and 
$\Phi(\Delta_f\x\{1\}\cup\Delta_{f_\star}\x I)\subset\R^k\but\{0\}$.
Hence $\nabla_\ccirc:=\Phi^{-1}(0)\but(\Delta_N\x I)$ lies in $\Delta_{f_\circ}\x [0,1)$, also 
$\nabla_\ccirc\cap(\Delta_{f_\circ}\x\{0\})=\Delta_h$.
Let $\nabla_\circ$ be the closure of $\nabla_\ccirc$ in $\bar\Delta_{f_\circ}\x I$, and let
$\nabla$ be its closure in $\bar\Delta_f\x I$.
Let $D_\ccirc$, $D_\circ$ and $D$ be the images of $\nabla_\ccirc$, $\nabla_\circ$ and $\nabla$
under the projection $\bar\Delta_f\x I\to\bar\Delta_f$.
Since $\Phi$ is PL, $\nabla$ is a closed subpolyhedron of $\bar\Delta_f\x I$ and 
consequently $D$ is a closed subpolyhedron of $\bar\Delta_f$.
Since $\Phi$ is generic on $\bar\Delta_{f_\circ}\x I$, we may assume that 
$\nabla_\ccirc$ is of codimension $\ge k$ in $\Delta_{f_\circ}\x I$, hence of dimension $\le d$.
Consequently $\nabla$ and $D$ are also of dimension $\le d$.

Let $\pi\:\bar\Delta_f\subset N\x N\to N$ be the composition of the inclusion and the projection onto 
the first factor and let $\pi_\circ=\pi|_{\bar\Delta_{f_\circ}}$ and $\pi_\ccirc=\pi|_{\Delta_{f_\circ}}$.

\begin{lemma} \label{pi-circ}
(a) $\Delta_{\pi_\ccirc}$ is homeomorphic to $\Delta^{(3)}_{f_\circ}$.

(b) $\Delta_{\pi_\circ}\but\Delta_{\pi_\ccirc}$ is a union of two homeomorphic copies of 
$(\Sigma_{f_\circ}\x N)\cap\Delta_{f_\circ}$.

(c) $\Sigma_{\pi_\ccirc}$ coincides with $(N\x\Sigma_{f_\circ})\cap\Delta_{f_\circ}$ as long as $f$ 
is generic on $N_\circ$.

(d) $\Sigma_{\pi_\circ}\but\Sigma_{\pi_\ccirc}$ coincides with $\Delta_{\Sigma^{(3)}_{f_\circ}}$ 
as long as $f$ is generic on $N_\circ$.
\end{lemma}

\begin{proof}[Proof. (a)]
Every point of $\Delta_{\pi_\ccirc}$ is of the form $\big((x,y),(x,z)\big)$, 
where $(x,y)$ and $(x,z)$ belong to $\Delta_{f_\circ}$ and $y\ne z$.
Thus the projection $N^4\to N^3$, $\big((x,y),(x,z)\big)\mapsto (x,y,z)$, sends $\Delta_{\pi_\ccirc}$ 
homeomorphically onto $\Delta^{(3)}_{f_\circ}$.
\end{proof}

\begin{proof}[(b)]
Clearly, $\Delta_{\pi_\circ}\but\Delta_{\pi_\ccirc}$ is the union of the images of
$(\Sigma_{f_\circ}\x N)\cap\Delta_{f_\circ}$ under the two embeddings $N^2\to N^4$ given by 
$(x,y)\mapsto\big((x,x),(x,y)\big)$ and $(x,y)\mapsto\big((x,y),(x,x)\big)$.
\end{proof}

\begin{proof}[(c)]
It is easy to see that $\Sigma_{\pi_\ccirc}\subset (N\x\Sigma_{f_\circ})\cap\Delta_{f_\circ}$. 
In more detail, by definition, $\Sigma_{\pi_\ccirc}$ consists of pairs $(p,q)\in\Delta_{f_\circ}$ 
whose arbitrary neighborhood in $\Delta_{f_\circ}$ contains distinct pairs $(x,y)$ and $(x,z)$.
Here $f_\circ(y)=f_\circ(x)=f_\circ(z)$, so $q\in\Sigma_{f_\circ}$.
Thus $\Sigma_{\pi_\ccirc}\subset (N\x\Sigma_{f_\circ})\cap\Delta_{f_\circ}$.

The reverse inclusion $(N\x\Sigma_{f_\circ})\cap\Delta_{f_\circ}\subset \Sigma_{\pi_\ccirc}$ 
needs the hypothesis that $f$ is generic on $N_\circ$.
Assuming this hypothesis, we actually have
$\Sigma_{\pi_\ccirc}=E_{f_\circ}=(N\x\Sigma_{f_\circ})\cap\Delta_{f_\circ}$ (see \S\ref{simple fold maps}
concerning $E_{f_\circ}$).
\end{proof}

\begin{proof}[(d)]
Let $\Sigma'_{f_\circ}$ denote the set of points $p\in N_\circ$ such that every neighborhood of $p$ contains 
distinct points $x$ and $y$ such that $x\in\Sigma_{f_\circ}$ and $f_\circ(x)=f_\circ(y)$; thus 
$\Delta_{\Sigma'_{f_\circ}}$ is the intersection of $\Delta_N$ with the closure of 
$(N\x\Sigma_{f_\circ})\cap\Delta_{f_\circ}$.

It is easy to see that $\Sigma_{\pi_\circ}\but\Sigma_{\pi_\ccirc}$ contains $\Delta_{\Sigma^{(3)}_{f_\circ}}$ 
and is contained in $\Delta_{\Sigma^{(3)}_{f_\circ}}\cup\Delta_{\Sigma'_{f_\circ}}$; actually, all three sets 
coincide as long as $f$ is generic on $N_\circ$.

In more detail, clearly $\Sigma_{\pi_\circ}\but\Sigma_{\pi_\ccirc}$ coincides with $\Sigma_{\pi_\circ}\cap\Delta_N$.
By definition, the latter consists of pairs $(p,p)\in\bar\Delta_{f_\circ}\cap\Delta_N$ whose arbitrary 
neighborhood in $\bar\Delta_{f_\circ}$ contains distinct pairs $(x,y)$ and $(x,z)$.
The case where $x$, $y$, $z$ can be chosen to be pairwise distinct amounts to $p\in\Sigma^{(3)}_{f_\circ}$,
and the case where $x$ and $y$ can be chosen to be equal amounts to $p\in\Sigma'_{f_\circ}$.
This proves the two inclusions.

Finally, as long as $f$ is generic on $N_\circ$, $(x,y)\in (N\x\Sigma_{f_\circ})\cap\Delta_{f_\circ}$ implies 
$(x,y,y)\in\bar\Delta^{(3)}_{f_\circ}$ and therefore $\Sigma'_{f_\circ}\subset\Sigma^{(3)}_{f_\circ}$.
\end{proof}

\subsection{$\pi$ embeds $D$}
Since $f$ is generic on $N_\circ$, we may assume that $\dim\Delta^{(3)}_{f_\circ}\le 3n-2m$ and that
$\dim(\Sigma_{f_\circ}\x N)\cap\Delta_{f_\circ}\le 3n-2m-1$.
Hence by Lemma \ref{pi-circ}(a,b) $\bar\Delta_{\pi_\circ}$ is of dimension at most $3n-2m$.
Therefore so is its closure $\bar\Delta_{\pi_\circ}$.

Let $K$ be an equivariant triangulation of $\bar\Delta_f$ and $L$ a triangulation of $N$ 
such that $\pi\:K\to L$ is simplicial and $\bar\Delta_f^\star$ and $N_\star$ are triangulated by subcomplexes.
Since $f_\circ$ is non-degenerate, so is $\pi_\circ$.
Therefore if $\sigma_1$, $\sigma_2$ are simplexes of $K$ not in $\bar\Delta_f^\star$ such that 
$\pi(\sigma_1)=\pi(\sigma_2)$, then they have the same dimension $s$.
Moreover, $s\le\dim\bar\Delta_{\pi_\circ}\le 3n-2m$.
Since $\Phi$ is generic on $\bar\Delta_{f_\circ}\x I$, we may assume that $\nabla\cap\sigma_i\x I$ 
is of dimension at most $s-(k-1)$, and therefore so is $D\cap\sigma_i$.
Since $2\big(s-(k-1)\big)-s\le 3n+2-2m-2k<0$, we may assume that $\pi$ embeds $D$.

\subsection{Construction of $U$}
Since $f_\circ$ is a simple fold map, we conclude from Lemma \ref{pi-circ}(c,d) that $\Sigma_{\pi_\circ}=\emptyset$.
Thus $\pi_\circ\:\bar\Delta_{f_\circ}\to N$ is an immersion.

Since $\pi_\circ$ immerses $\bar\Delta_{f_\circ}$ and embeds $D_\circ$, it must in fact embed some $\Z/2$-invariant 
closed neighborhood $U_\circ$ of $D_\circ$ in $\bar\Delta_{f_\circ}$.%
\footnote{Indeed, since $D_\circ\x D_\circ$ and $\bar\Delta_{\pi_\circ}$ are disjoint closed subsets of 
$\bar\Delta_{f_\circ}\x\bar\Delta_{f_\circ}$, there exists an open neighborhood $U'_\circ$ of $D_\circ$ 
such that $U'_\circ\x U'_\circ$ is disjoint from $\bar\Delta_{\pi_\circ}$.
Since $D_\circ$ is invariant, $U'_\circ$ contains an invariant closed neighborhood $U_\circ$ of $D_\circ$.}
It is not hard to choose $U_\circ$ so that its closure $U$ in $\bar\Delta_f$ is a closed subpolyhedron of 
$N\x N$, and $U_\star:=U\but U_\circ$ coincides with $D\but D_\circ$.
Since $\pi$ embeds $D$ and $U_\circ$, and $\pi(U_\star)\cap\pi(U_\circ)\subset N_\star\cap N_\circ=\emptyset$, 
we get that $\pi$ embeds $U$.
(It would in fact suffice for our purposes to know that it embeds $U_\circ$, but to stay within
the PL category it helps to deal with compact polyhedra.)

\subsection{Construction of $e_t$}
Let us construct a PL homotopy $e_t\:\pi(U)\to\R^k$ keeping $\pi(U_\star)$ fixed
and such that $e_0=g|_{\pi(U)}$ and $e_t(y)-e_t(x)=\Phi(x,y,t)$ for all $(x,y)\in U$.
(We do not need to worry if $e_t$ keeps the entire frontier of $\pi(U)$ fixed, because we will eventually 
use only the restriction of $e_t$ to a small neighborhood of $\pi(D)$ in $\pi(U)$.)

For each $x\in\pi(U)$ there is a unique $y=y(x)\in\pi(U)$ such that 
$(x,y)\in U$.
The vector $e_t(y)-e_t(x):=\Phi(x,y,t)$ is given for each $t$, and we have some freedom
in choosing its endpoints $e_t(x)$ and $e_t(y)$.
We may, for instance, endow every point $x\in\pi(U)$ with a ``mass'' continuously 
depending on $x$ and choose the endpoints $e_t(x)$ and $e_t(y)$ so that their ``center 
of gravity'' does not depend on $t$.
For our purposes, it suffices to consider the constant mass function, so that the center
of gravity is the midpoint of the vector.
The requirement that this midpoint be fixed under the homotopy can be expressed by
\[\frac{e_t(x)+e_t(y)}2=\frac{g(x)+g(y)}2,\]
where the left hand side can be rewritten as
\[e_t(x)+\tfrac12\big(e_t(y)-e_t(x)\big)=e_t(x)+\tfrac12\Phi(x,y,t).\]
Thus we define $e_t\:\pi(U)\to\R^k$ by 
\begin{align*}
e_t(x)=&\ \tfrac12\big(g(x)+g(y)-\Phi(x,y,t)\big)\\
=&\ g(x)+\tfrac12\big(g(y)-g(x)-\Phi(x,y,t)\big),
\end{align*}
where $y\in\pi(U)$ is the unique point such that $(x,y)\in U$.
Clearly, $e_t$ is piecewise linear, $e_0$ is the restriction of $g$, and 
$e_t(y)-e_t(x)=\Phi(x,y,t)$ for all $(x,y)\in U$; the latter can also be verified 
directly:
\[e_t(y)-e_t(x)=g(y)-g(x)-(\tfrac12+\tfrac12)\big(g(y)-g(x)-\Phi(x,y,t)\big)=\Phi(x,y,t).\]
Finally, since $U_\star\subset\bar\Delta_f^\star$ and $\Phi(x,y,t)=g(y)-g(x)$
for $(x,y)\in\bar\Delta_f^\star$, we have $e_t(x)=g(x)$ for all $x\in\pi(U_\star)$. 
Let us note that since $e_t(y)-e_t(x)=\Phi(x,y,t)$ for $(x,y)\in U$ and 
$\Phi(\Delta_f\x\{1\})\subset\R^k\but\{0\}$, $e_1(x)\ne e_1(y)$ for $(x,y)\in U$.

\begin{remark}\label{announcement}
By considering a non-constant mass function, it is not hard to generalize the above construction of $e_t$ to 
the case where $f$ embeds $\Sigma_f$ but is not necessarily a simple fold map.
However, in the case where $f$ is a simple fold map, below we extend $e_t$ to a neighborhood of $\pi(U)$ in $N$ 
without creating new double points; this seems to be very difficult to achieve when $f$ is not a simple fold map.
The latter issue was overlooked by the author at the time of the announcements of Theorem \ref{th1}(c) in
\cite{M1}*{third remark after Theorem 5} and \cite{M3}*{\S1}, which is why those announcements contain 
the restriction $4n-3m\le k$ (instead of $3n-2m\le k$). 
\end{remark}

\subsection{Perturbation of $e_t$}
The constraint $e_t(y)-e_t(x)=\Phi(x,y,t)$ for $(x,y)\in U$ can be used to 
reconstruct $\Phi$ from $e_t$.
More precisely, if we amend $e_t$ into a PL homotopy $g'_t$ by an amendment with support
in $U$, then this constraint yields a new isovariant PL homotopy 
$\Phi'\:\bar\Delta_f\x I\to\R^k$ which coincides with $\Phi$ outside $U$.
Moreover, if $g'_t$ satisfies $g'_0=g|_{\pi(U)}$ and $g'_1(x)\ne g'_1(y)$ 
for $(x,y)\in U$, then $\Phi'$ satisfies 
$\Phi'(x,y,0)=\Phi(x,y,0)$ and $\Phi'(\Delta_f\x\{1\})\subset\R^k\but\{0\}$.
Also, as long as the amendment preserves the midpoints, that is,
$\frac12\big(g'_t(x)+g'_t(y)\big)=\frac12\big(g(x)+g(y)\big)$ for all $(x,y)\in U$, 
then it is compatible with the definition of $e_t$; that is, if we repeat the definition 
of $e_t$ with $\Phi'$ in place of $\Phi$, we will get nothing but $g'_t$.
Thus we are free to perturb $e_t$ keeping the midpoints fixed.
Then by arguments similar to equivariant general position we may assume $e_t$ to be generic
on $\pi(U_\circ)\x(0,1]$. 

Let us define $\Xi\:\pi(U)\x I\to M\x\R^k$ by $\Xi(x,t)=\big(f(x),\,e_t(x)\big)$.
We have $\Xi(x,0)=h(x)$ for $x\in\pi(U)$, where $h=f\x g\:N\to M\x\R^k$ is 
our original generic lift of $f$.
Since $e_1(x)\ne e_1(y)$ for $(x,y)\in U$, we get that $\Xi$ embeds 
$\pi(U)\x\{1\}$.
However, since $f$ may have triple points, $\Xi\big(\pi(U)\x\{1\}\big)$ may 
intersect $h\big(N\but\pi(U)\big)$.

\begin{lemma} \label{perturbation}
By perturbing $e_t$ we may make $\Xi\big(\pi(D_\circ)\x I\big)$ 
disjoint from $h\big(N_\circ\but\pi(D_\circ)\big)$.
\end{lemma}

\begin{proof}
Let $T$ and $T^*$ be the images of $(D_\circ\x N_\circ)\cap\bar\Delta_{f_\circ}^{(3)}$ 
under the projections of $D_\circ\x N_\circ$ to its factors.
Then $\pi(T)$ and $T^*$ are closed subsets of $N_\circ$ and $\dim T=\dim T^*\le d+n-m=3n-2m-k+1<k-1$.
Let us note that since $f_\circ$ is a simple fold map and $f$ is generic on $N_\circ$, we have
$\bar\Delta^{(3)}_{f_\circ}=\Delta^{(3)}_{f_\circ}$; consequently $T\subset D_\ccirc$.
Since $\pi$ embeds $D_\circ$, $T^*$ is disjoint from $\pi(D_\circ)$.
Then, since $\Delta_h\subset D_\circ$, $h(T^*)$ is disjoint from $h\big(\pi(D_\circ)\big)$, 
and in particular from $h\big(\pi(T)\big)$.
Since $f|_{\pi(T)\cup T^*}$ is non-degenerate and $e_t$ is generic on $\pi(U_\circ)\x(0,1]$, we may assume that 
$\Xi\big(\pi(T)\x I\big)$ is disjoint from $h(T^*)$ in $M\x\R^k$.

Indeed, let $Q$ and $R$ be triangulations of $N$ and $M$ such that $f\:Q\to R$ is simplicial, and let
$\sigma\subset\pi(T)$ and $\tau\subset T^*$ be simplexes of $Q$.
If $\Xi(x\x I)$ is not disjoint from $h(y)$, then $f(x)=f(y)$.
Hence if $x\in\sigma$ and $y\in\tau$, then $f(\sigma)=f(\tau)$ and $\sigma$, $\tau$ have the same 
dimension $s<k-1$.
Since $(s+1)+s<s+k$, we may assume $\Xi(\sigma\x I)$ to be disjoint from $h(\tau)$ in $f(\sigma)\x\R^k$.
Indeed, it is easy to see that a generic $(s+1)$-plane in $\R^{s+k}$, where $k>s+1$, is disjoint from 
$\R^s\x\{0\}$ and at the same time maps surjectively onto $\R^s$ under the vertical projection.

Therefore $\Xi\big(\pi(D_\circ)\x I\big)$ is disjoint from $h\big(N_\circ\but\pi(D_\circ)\big)$.
\end{proof}

\begin{lemma} There is a closed neighborhood $W^\sharp_\circ$ of $\pi(D_\circ)$ in $N_\circ$ 
such that $\Delta_{f|_{W^\sharp_\circ}}\subset U$.
\end{lemma}

It is clear from the proof that the closure $W^\sharp$ of $W^\sharp_\circ$ in $N$ may be assumed
to be a subpolyhedron of $N$.

\begin{proof}
Indeed, let $D_\circ^*=\pi^{-1}\big(\pi(D_\circ)\big)\but D_\circ\subset\bar\Delta_f$.
Since $\pi(D_\circ)\subset N_\circ$, we have $D_\circ^*\subset\bar\Delta_{f_\circ}$, 
and since $f_\circ$ is a simple fold map, $D_\circ^*\subset\Delta_{f_\circ}$.
Let $T$ and $T^*$ be as in the proof of Lemma \ref{perturbation}.
Then $\pi(D_\circ^*)=\pi(T)$ and $\pi^*(D_\circ^*)=T^*$, where $\pi^*\:\bar\Delta_f\subset N\x N\to N$ 
is the composition of the inclusion and the projection onto the second factor.
Let $O$ and $O^*$ be disjoint open neighborhoods of $\pi^*(D_\circ)=\pi(D_\circ)$ and of $T^*$ in $N_\circ$.
We may assume that their closures in $N$ are subpolyhedra of $N$.
Clearly, $(\pi^*)^{-1}(O)$ and $(\pi^*)^{-1}(O^*)$ are disjoint open neighborhoods of
$D_\circ$ and $D_\circ^*$ in $\bar\Delta_{f_\circ}$.
Then $J:=\big((\pi^*)^{-1}(O)\cap\Int U\big)\cup(\pi^*)^{-1}(O^*)$ is an open 
neighborhood of $D_\circ\cup D_\circ^*=\pi^{-1}\big(\pi(D_\circ)\big)$ in $\bar\Delta_{f_\circ}$.
Then $J$ is also open in $\bar\Delta_f$, so $\bar\Delta_f\but J$ is compact, and consequently 
so is its image $\pi(\bar\Delta_f\but J)$.
Since $\bar\Delta_{f_\circ}\but J$ is disjoint from $\pi^{-1}\big(\pi(D_\circ)\big)$, so 
is $\bar\Delta_f\but J$, and consequently $\pi(\bar\Delta_f\but J)$ is disjoint from 
$\pi(D_\circ)$.
Hence $O':=N_\circ\but\pi(\bar\Delta_f\but J)$ is an open neighborhood of 
$\pi(D_\circ)$ in $N_\circ$ such that $\pi^{-1}(O')\subset J$.
Let $W^\sharp_\circ$ be a closed neighborhood of $\pi(D_\circ)$ in $O\cap O'$.
Then $W^\sharp_\circ$ is disjoint from $O^*$ and 
$\pi^{-1}(W^\sharp_\circ)\subset J\subset U\cup(\pi^*)^{-1}(O^*)$.
Hence $\pi^{-1}(W^\sharp_\circ)\cap(\pi^*)^{-1}(W^\sharp_\circ)\subset U$; in other words, 
$\Delta_{f|_{W^\sharp_\circ}}\subset U$.
\end{proof}

\subsection{Construction of $W$ and $W^\flat$} 
Let $Z^\sharp_\circ$ be the closure of $N_\circ\but W^\sharp_\circ$ in $N_\circ$.
Since $\Xi\big(\pi(D_\circ)\x I\big)$ is disjoint from $h\big(N_\circ\but\pi(D_\circ)\big)$,
it is disjoint from $h(Z^\sharp_\circ)$.
Let us recall that $\Xi(x,t)=\big(f(x),\tfrac12g(x)+\tfrac12g(y)-\tfrac12\Phi(x,y,t)\big)$,
where $y\in\pi(U)$ is the unique point such that $(x,y)\in U$.
Since $\Xi\big(\pi(D_\circ)\x I\big)$ and $h(Z^\sharp_\circ)$ are disjoint, where $\pi(D_\circ)$ and 
$Z^\sharp_\circ$ are disjoint closed subsets of $N_\circ$ whose closures in $N$ are subpolyhedra of $N$, 
there exists a PL function $\eps\:N\to[0,\infty)$ such that 
$\eps^{-1}(0)=N_\star$ and for each $x\in\pi(D_\circ)$ and $t\in I$ we have 
$\big(f(x'),\tfrac12g(x')+\tfrac12g(y')-\tfrac12\Phi(x'',y'',t)\big)\notin h(Z^\sharp_\circ)$ whenever 
$x'$, $x''$ are $\eps(x)$-close to $x$ and $y'$, $y''$ are $\eps(y)$-close to the unique point $y\in\pi(U)$
such that $(x,y)\in U$.

Let $W_\circ$ be a closed neighborhood of $\pi(D_\circ)$ in $N_\circ$ which is contained in 
the open $\eps$-neighborhood of $\pi(D_\circ)$ (that is, the union of the open balls $B_{\eps(x)}(x)$ 
for all $x\in \pi(D_\circ)$).
We may assume that $W_\circ\subset\Int W^\sharp_\circ$ and that the closure $W$ of $W_\circ$ in $N$ 
is a subpolyhedron of $N$.
Let $W^\flat_\circ$ be a closed neighborhood of $\pi(D_\circ)$ in $\Int W_\circ$.
We may assume that the closure $W^\flat$ of $W^\flat_\circ$ in $N$ is a subpolyhedron of $N$.

Let $V^\flat=\Delta_{f|_{W^\flat}}$ and $V=\Delta_{f|_{W}}$, and let $\varLambda^\flat$ and 
$\varLambda$ be the closures of $\Delta_f\but V^\flat$ and $\Delta_f\but V$ in $\Delta_f$.
Similarly, let $\bar V^\flat=\bar\Delta_{f|_{W^\flat}}$ and $\bar V=\bar\Delta_{f|_{W}}$, 
and let $\bar\varLambda^\flat$ and $\bar\varLambda$ be the closures of $\bar\Delta_f\but\bar V^\flat$ 
and $\bar\Delta_f\but\bar V$ in $\bar\Delta_f$.
Let us note that $\pi(\bar V)\subset W$, and by our choice of $\eps$ (using $x''=x'$ and $y''=y'$), 
$\Xi\big(\pi(\bar V)\x I\big)$ is disjoint from $h(Z^\sharp_\circ)$.

\subsection{Construction of $\Phi^+$}
Let $\Phi^+$ be the composition
\[\bar\Delta_f\x I\xr{r}\bar\Delta_f\x\{0\}\cup(\bar V^\flat\cup\bar\varLambda)\x I\xr{\Psi}\R^k,\]
where $r$ is an isovariant PL retraction such that $r(x,y,t)=(x',y',t')$ implies $d(x,x')<\eps(x')$
and $d(y,y')<\eps(y')$ and $\Psi$ is defined by \[\Psi(x,y,t)=\begin{cases}
\Phi(x,y,t),&\text{ if }(x,y)\in\bar V^\flat;\\
g(y)-g(x),&\text{ if }(x,y)\in\bar \varLambda;\\
g(y)-g(x),&\text{ if }t=0.
\end{cases}\]
Then $\Phi^+$ is an isovariant PL homotopy satisfying $\Phi^+|_{\bar V^\flat\x I}=\Phi|_{\bar V^\flat\x I}$ 
and $\Phi^+(x,y,t)=g(y)-g(x)$ for all $(x,y,t)\in\bar\Delta_f\x\{0\}\cup\bar \varLambda\x I$.
Also, since $\Phi^+(V^\flat\x\{1\})\subset\Phi(\Delta_f\x\{1\})\subset\R^k\but\{0\}$ and 
$\Phi^+(\varLambda^\flat\x I)\subset\Phi(\Delta_f\x I\but D_\ccirc\x I)\subset\R^k\but\{0\}$, we have 
$\Phi^+(\Delta_f\x\{1\})\subset\R^k\but\{0\}$.

\subsection{Construction of $e_t^+$}
Let $e^+_t\:\pi(U)\to\R^k$ be defined similarly to $e_t$ but using $\Phi^+$ in place of $\Phi$:
\[e^+_t(x)=g(x)+\tfrac12\big(g(y)-g(x)-\Phi^+(x,y,t)\big),\]
where $y\in\pi(U)$ is the unique point such that $(x,y)\in U$.
Then like before, $e_t^+$ keeps $\pi(U_\star)$ fixed, $e^+_0=g|_{\pi(U)}$ and $e^+_t(y)-e^+_t(x)=\Phi^+(x,y,t)$ 
for all $(x,y)\in U$.
Also, $e_t^+(x)=e_t(x)$ for all $x\in\bar V^\flat$ and $e_t^+(x)=g(x)$ for all 
$x\in\pi(U\cap\bar\varLambda)$.

Let us define $\Xi^+\:\pi(U)\x I\to M\x\R^k$ by $\Xi^+(x,t)=\big(f(x),\,e^+_t(x)\big)$.
Then $\Xi^+(x,t)=h(x)$ for all $(x,t)\in\pi(U)\x\{0\}\cup\pi(U\cap\bar\varLambda)\x I$ and 
$\Xi^+(x,t)=\Xi(x,t)$ for all $x\in\pi(\bar V^\flat)$.
Also, since $e^+_t(x)-e^+_t(y)=\Phi^+(x,y,t)$ for all $(x,y)\in U$ and
$\Phi^+(\Delta_f\x\{1\})\subset\R^k\but\{0\}$, we get that $\Xi^+$ embeds $\pi(U)\x\{1\}$.
Since $\Xi^+(x,t)=\Xi(x,t)$ for all $x\in\pi(\bar V^\flat)$, where $\bar V^\flat\subset\bar V$ and
$\Xi(\pi(\bar V)\x I)$ is disjoint from $h(Z^\sharp_\circ)$, we also get that
$\Xi^+\big(\pi(\bar V^\flat)\x I\big)$ is disjoint from $h(Z^\sharp_\circ)$.
Moreover, in fact, $\Xi^+\big(\pi(\bar V)\x I\big)$ is disjoint from $h(Z^\sharp_\circ)$ due to 
our choice of $\eps$ (this time using $x''\ne x'$ and $y''\ne y'$).

\subsection{Construction of $h_t$}
Let $Z$ be the closure of $N\but W_\circ$ in $N$.
Let $\Gamma$ be the composition
\[N\x I\xr{R}N\x\{0\}\cup\big(\pi(U)\cup Z\big)\x I\xr{\Theta}\R^k,\]
where $R$ is a PL retraction such that $r(x,t)=(x',t')$ implies $d(x,x')<\eps(x')$ and $\Theta$ is defined by 
\[\Theta(x,t)=\begin{cases}
e_t^+(x,t),&\text{ if }(x,y)\in\pi(U);\\
g(x),&\text{ if }(x,y)\in Z;\\
g(x),&\text{ if }t=0.
\end{cases}\]
Let us note that the three cases agree on overlaps, including 
$\pi(U)\cap Z=\pi(U\cap\bar\varLambda)$.

Clearly, $f\x\Gamma\:N\x I\to M\x\R^k$ is an extension of $\Xi^+$.
Let us define $g_t\:N\to\R^k$ by $g_t(x)=\Gamma(x,t)$, and let $h_t=f\x g_t\:N\to M\x\R^k$.
Then $h_0=h$, $h_t|_Z=h|_Z$ and $h_t(x)=\Xi(x,t)$ for all $x\in\pi(\bar V^\flat)$.
Also, $h_1$ embeds $\pi(U)$, and $h_t\big(\pi(\bar V)\big)$ is disjoint from $h(Z^\sharp_\circ)$ 
for each $t\in I$.
Moreover, due to our choice of $\eps$, in fact, $h_t(W)$ is disjoint from $h(Z^\sharp_\circ)$ for each $t\in I$.
In other words, $h_t(W_\circ)$ is disjoint from $h(Z^\sharp)$ for each $t\in I$, where $Z^\sharp$ is 
the closure of $N\but W^\sharp_\circ$ in $N$.

\subsection{Verification}
Since $\pi(\Delta_h)\subset\pi(D_\ccirc)\subset\pi(D)$ and $h\big(\pi(D)\big)$ is disjoint from $h(Z)$, 
$h$ embeds $Z$.
Since $h_t|_Z=h|_Z$, so does $h_t$ for each $t\in I$.
In particular, $h_t$ embeds $Z^\sharp$, and $h_t(Z\but Z^\sharp)$ is disjoint from $h_t(Z^\sharp)$.
On the other hand, since $N\but Z\subset W_\circ$, where $h_t(W_\circ)$ is disjoint from 
$h(Z^\sharp)$ and $h|_{Z^\sharp}=h_t|_{Z^\sharp}$ for each $t\in I$, we get that $h_t(N\but Z)$ is 
disjoint from $h_t(Z^\sharp)$.
Thus $h_t(N\but Z^\sharp)$ is disjoint from $h_t(Z^\sharp)$.
Since $h_t$ also embeds $Z^\sharp$, we obtain that $\Delta_{h_t}\cap (N\x Z^\sharp\cup Z^\sharp\x N)=\emptyset$ 
for each $t\in I$.
In particular, $\Delta_{h_t}\subset W^\sharp\x W^\sharp$.
Finally, since $h_1$ embeds $\pi(\bar V^\sharp)$, where $\bar V^\sharp=\bar\Delta_{f|_{W^\sharp}}$, 
it also embeds $W^\sharp$.
Consequently, $h_1$ is an embedding.

It remains to verify that $\tl g_1$ is equivariantly homotopic to the given map $\alpha\:\Delta_f\to S^{k-1}$.
Let $V^\sharp=\Delta_{f|_{W^\sharp}}$ and let $\varLambda^\sharp$ be the closure of $\Delta_f\but V^\sharp$ 
in $\Delta_f$.
Since $\Delta_{h_t}\cap (N\x Z^\sharp\cup Z^\sharp\x N)=\emptyset$ for each $t\in I$, so in particular 
$\Delta_{h_t}\cap\varLambda^\sharp=\emptyset$, there is an equivariant homotopy 
$\psi_t\:\varLambda^\sharp\to S^{k-1}$, defined by 
$\psi_t(x,y)=\frac{g_t(y)-g_t(x)}{||g_t(y)-g_t(x)||}$, such that $\psi_1=\tilde g_1|_{\varLambda^\sharp}$.
Since $\psi_t$ keeps $U\cap\varLambda^\sharp$ fixed (where $U\cap\varLambda^\sharp$ contains $U\but\Int U$), 
it extends to an equivariant homotopy $\psi^+_t\:\Delta_f\to S^{k-1}$ between 
$\psi^+_0:=(\tilde g_1|_{V^\sharp})\cup\psi_0$ and $\psi^+_1:=\tilde g_1$.
But it is easy to see (using that $V\subset V^\sharp\subset U$) that $\psi^+_0$ coincides with the composition 
$\Delta_f\xr{\Phi^+|_{\Delta_f\x\{1\}}}\R^k\but\{0\}\to S^{k-1}$.
It follows from the definition of $\Phi^+$ that the latter is equivariantly homotopic to
the composition $\Delta_f\xr{\Phi|_{\Delta_f\x\{1\}}}\R^k\but\{0\}\to S^{k-1}$.
But the latter is in turn equivariantly homotopic to $\alpha$ by the construction of $\Phi$. 
\qed

\subsection{Proof of Addendum} To prove Addendum \ref{mainlemma'}(a), we proceed as in 
the above proof of Lemma \ref{mainlemma}(a), with the following modifications.

The map $g$ is now not chosen at random but given. 
By the hypothesis $\Delta_h$ is disjoint from $X$.
Also, since $\alpha$ extends $\tilde g$, we may assume that $\Phi$ keeps $X$ fixed.
Then $D$ will be disjoint from $X$.
Then we may also choose $U$ to be disjoint from $X$.
Then, although $f_\circ$ is no longer assumed to be a simple fold map, the same constructions still work. 
\qed

\section{Proof of Main Lemma: Smooth case} \label{sec3}

In this section we will assume familiarity with Appendix \ref{sec-xx}
(with the exception of Theorem \ref{projective differential} and its consequences, which are not needed now).

The proof of Lemma \ref{mainlemma}(b) and Addendum \ref{mainlemma'}(b) is generally similar to 
the proof of Lemma \ref{mainlemma}(a) and Addendum \ref{mainlemma'}(a), with many straightforward 
modifications.
We will discuss only substantial modifications.

\subsection{Boundary constraints}
Compared to the PL case, $f$ is assumed to be generic not just on $N\but N_\star$, but on the entire $N$.
This means, in particular, that $f$ may be assumed to be transverse to $\partial M_\star$.
Then whatever is given on $N_\star$ can be extended over a small neighborhood of $N_\star$ in $N$.
Due to this, $\pi(D)$ will be entirely contained in $N\but N_\star$, and so we no longer need to care 
about the intersection of $\pi(U)$ with $N_\star$.
(The solution we used in the PL case, to keep this intersection to a minimum, would actually not suffice 
for the smooth case, had we still assumed $f$ to be generic only on $N\but N_\star$.)
Due to this simplification, we will no longer discuss the behaviour at the boundary in what follows.

\subsection{Making $\phi$ smooth}
The construction of the isovariant map $\phi\:\bar\Delta_f\to\R^k$ is modified as follows.
Since the manifold with boundary $\check\Delta_f$ is equivariantly homotopy equivalent 
to its interior $\Delta_f$, the given map $\alpha\:\Delta_f\to S^{k-1}$ is equivariantly 
homotopic to the restriction of a smooth map $\check\alpha\:\check\Delta_f\to S^{k-1}$.
On the other hand, let $\check G\:\check\Delta_f\to S^{k+d-1}\x[0,\infty)$ be as in Lemma \ref{3.1}, 
and let $\kappa\:\check\Delta_f\to [0,\infty)$ be the composition of $\check G$ and 
the projection onto $[0,\infty)$.
Since $\kappa$ is smooth, so is $\check\phi:=\check\alpha\x\kappa\:\check\Delta_f\to S^{k-1}\x[0,\infty)$.

Let $\mathring\R^k$ be the blowup of $\R^k$ at $0$; thus the map $S^{k-1}\x[0,\infty)\to\R^k$, $(x,s)\mapsto sx$, 
factors as a composition $S^{k-1}\x[0,\infty)\xr{Q}\mathring\R^k\xr{R}\R^k$.
Also let $\hat\Delta_f$ be the image of $\check\Delta_f$ in the blowup of $N\x N$ 
along $\Delta_N$.
Then the projection $\check\Delta_f\to\bar\Delta_f$ factors as a composition
$\check\Delta_f\xr{q}\hat\Delta_f\xr{r}\bar\Delta_f$.
Since $\check\phi$ is equivariant, it descends to a map $\hat\phi\:\hat\Delta_f\to\mathring\R^k$,
which is easily seen to be smooth.
Since $f$ is a corank one map, $r$ is a diffeomorphism, and consequently $\check\phi$
descends to a smooth map $\phi\:\bar\Delta_f\to\R^k$, as illustrated in the diagram:
\[\begin{CD}
\check\Delta_f@>\check\phi>>S^{k-1}\x[0,\infty)\\
@VVq V@VVQ V\\
\hat\Delta_f@>\hat\phi>>\mathring\R^k\\
@VVr V@VVR V\\
\bar\Delta_f@>\phi>>\R^k.
\end{CD}\]

\subsection{Analyzing $\phi$ at $\Sigma_f$}
Let us compute $d\phi_{(x,x)}(v)$ for each $x\in\Sigma_f$ and each unit vector 
$v\in T_x N$ in the kernel of $df_x$, where the tangent bundle $\T_N\:TN\to N$ is 
identified with the normal bundle $\nu$ of $\Delta_N\subset N^2$.
By Lemma \ref{smoothcurve} there exists a smooth curve $\delta\:\R\to N$ such that 
$\delta(0)=x$, $\delta'(0)=v$ and $f\big(\delta(t)\big)=f\big(\delta(-t)\big)$ for each $t\in\R$.
Let $\gamma=\delta\x\delta\:\R\to N\x N$ and let $\check\gamma\:\R\to\check N$ be the lift of $\gamma$.
Then
\begin{multline*}
d\phi_{(x,x)}(v)=d\phi_{(x,x)}\big(\gamma'(0)\big)=(\phi\gamma)'(0)
=\lim_{t\to 0^+\!\!\!\!}\,\,\frac{(\phi\gamma)(t)-0}{t}
=\lim_{t\to 0}\,\frac{\check\phi\big(\delta(t),\delta(-t)\big)}{t}\\
=\lim_{t\to 0^+\!\!\!\!}\,\frac{\big|\big|G\delta(-t)-G\delta(t)\big|\big|
\ \check\alpha\big(\delta(t),\delta(-t)\big)}{t}
=\Bigg|\Bigg|\lim_{t\to 0}\frac{G\delta(-t)-G\delta(t)}{t}\Bigg|\Bigg|\ 
\check\alpha\Big(\lim_{t\to 0^+\!\!\!\!}\ \gamma(t)\Big)\\
=\big|\big|{-2}(G\delta)'(0)\big|\big|\ \check\alpha\big(\check\gamma(0)\big)
=-2\big|\big|dG_x\big(\delta'(0)\big)\big|\big|\ 
\check\alpha(x,v)=-2\big|\big|dG_x(v)\big|\big|\check\alpha(x,v).
\end{multline*}
Here $dG_x(v)\ne 0$ since $G$ is a smooth embedding.
Hence $d\phi_{(x,x)}(v)\ne 0$.

\subsection{Perturbing $\Phi$, $\Phi^+$ and $g_t$}
When perturbing $\phi$ along with the map $e_1\:\pi(U)\to\R^k$ so that
$\phi(x,y)=e_1(y)-e_1(x)$ for $(x,y)\in U$, the following conditions must be preserved:
$\phi$ is smooth and isovariant, and the restriction of $d\phi$ to $\ker df$ is 
a monomorphism of bundles.
It is not hard to ensure that $\phi^+:=\Phi^+|_{\bar\Delta_f\x\{1\}}$ also satisfies these conditions 
by slightly perturbing it keeping $\bar V^\flat\cup\bar\varLambda$ fixed.
Also, it is not hard to ensure that $g_1$ is smooth by slightly perturbing it keeping
$\pi(U)\cup Z$ fixed.

\subsection{Injectivity of the differential}
Let us compute $d(g_1)_x(v)$, where $x\in\Sigma_f\cap\pi(U)$ and $v\in T_x N$ is 
a unit vector in the kernel of $df_x$.
We may assume that $\gamma\big((0,1)\big)\subset U$ and so 
$\delta\big((-1,1)\big)\subset\pi(U)$, so that we can use the constraint
$g_1(y)-g_1(x)=\phi^+(x,y)$ for $(x,y)\in U$.
\begin{multline*}
d(g_1)_x(v)=d(g_1)_x\big(\delta'(0)\big)=(g_1\delta)'(0)
=\lim_{t\to 0}\,\frac{g_1\delta(t)-g_1\delta(-t)}{2t}
=-\lim_{t\to 0}\,\frac{\phi^+\big(\delta(t),\delta(-t)\big)}{2t}\\
=-\frac12\,\lim_{t\to 0^+\!\!\!\!}\,\,\frac{\phi^+\gamma(t)-0}{t}
=-\frac12 (\phi^+\gamma)'(0)=-\frac12 d\phi^+_{(x,x)}\big(\gamma'(0)\big)
=-\frac12 d\phi^+_{(x,x)}(v).
\end{multline*}
But $d\phi^+_{(x,x)}(v)\ne 0$ since the restriction of $d\phi^+$ to $\ker df$ is a monomorphism.
Hence $d(g_1)_x(v)\ne 0$.
Thus $f\x g_1$ is a smooth embedding. 
\qed

\section{Proofs of Theorems \ref{akhmetiev} and \ref{th3}}\label{sec5}

\begin{proof}[Proof of Theorem \ref{akhmetiev}] Since $N$ is stably parallelizable, the tangent bundle of 
$\tilde N$ is isomorphic to $n\lambda$, where $\lambda$ is the line bundle associated with the two-fold 
cover $\tilde N\to\tilde N/t$ (see Lemma \ref{s-parallelizable}).
On the other hand, since $M$ is stably parallelizable, the normal bundle of $\Delta_f/t$ in $\tilde N/t$
is stably isomorphic to $n\lambda_f$, where $\lambda_f$ is the restriction of $\lambda$ over $\Delta_f/t$ 
(see Lemma \ref{s-parallelizable2}).
Therefore $\Delta_f/t$ is stably parallelizable.

By Proposition \ref{trivial range} $f$ lifts to an embedding $g\:M\to N\x\R^{n+1}$.
This yields an equivariant map $\tilde g\:\Delta_f\to S^n$.
We may assume that $\tilde g$ is transverse to the equatorial $S^{n-1}$.
Let $F$ be the composition $M\xr{g} N\x\R^{n+1}\xr{\pi}N\x\R$, where $\pi$ is the projection onto 
the vertical line.
Then $\Delta_F=\tilde g^{-1}(S^{n-1})$.
Consequently $\Delta_F/t=\bar g^{-1}(\R P^{n-1})$, where $\bar g\:\Delta_f/t\to\R P^n$ is determined
by $\tilde g$ and therefore is the classifying map of the bundle $\lambda_f$.
Hence $\lambda_f$ is the pullback of the tautological line bundle $\gamma$ over $\R P^n$.
Since $[\R P^{n-1}]$ is Poincar\'e dual to $w_1(\gamma)$, we obtain that $[\Delta_F/t]$ is
Poincar\'e dual to $w_1(\lambda_f)$.

On the other hand, since the normal bundle of $\R P^{n-1}$ in $\R P^n$ is isomorphic to $\gamma$,
the normal bundle of $\Delta_F/t$ in $\Delta_f/t$ is isomorphic to $\lambda$.
Since $\Delta_f/t$ is stably parallelizable, we get that $\tau\oplus\lambda$ is stably trivial,
where $\tau$ is the tangent bundle of $\Delta_F/t$.
Hence by \cite{KM}*{Lemma 3.5} $\tau\oplus\lambda$ is trivial.
Then by the Hirsch theorem (see \cite{RS2}, \cite{RS1}) $\Delta_F/t$ immerses in $\R^n$.
Since $n\ne 1,3,7$, this implies that $w_1^{n-1}(\Delta_F/t)=0$ (see \cite{M1}*{Lemma 9}).
Also we have $w_1(\Delta_F/t)=w_1(\lambda_F)$, where $\lambda_F$ is $\lambda$ restricted
over $\Delta_F/t$, so $w_1^{n-1}(\lambda_F)=0$.
Since $[\Delta_F/t]$ is Poincar\'e dual to $w_1(\lambda_f)$, it follows that $w_1(\lambda_f)^n=0$.
Then $\Delta_f$ admits an equivariant map to $S^{n-1}$ (see Lemma \ref{5.1}).
\end{proof}

\begin{lemma} \label{s-parallelizable}
If $N$ is a stably parallelizable $n$-manifold, then the tangent bundle of $\tilde N/t$
is stably isomorphic to $n\lambda$, where $\lambda$ is the line bundle associated with the double cover 
$\tilde N\to\tilde N/t$.
\end{lemma}

The case $N=S^n$ is discussed in \cite{A3}*{Lemma 3.1}.

\begin{proof}
Since $N$ is stably parallelizable, $Q:=N\x\R$ is parallelizable \cite{KM}*{proof of Lemma 3.4}.
The total space $T_{\tilde Q/t}$ of the tangent bundle $\tau_{\tilde Q/t}$ is naturally identified
with a submanifold of $\widetilde{T_Q}/t$.
The points of the latter are of the form $\{(x,y,u,v),(y,x,v,u)\}$, where $u\in T_xQ$ and $v\in T_yQ$.
If $e_0,\dots,e_n$ are the basis vectors of a framing of $\tau_Q$, then $\tau_{\tilde Q/t}$
is the Whitney sum $\eps_0\oplus\dots\oplus\eps_n\oplus\lambda_0\oplus\dots\oplus\lambda_n$ of 
its line subbundles $\eps_i$ with basis vectors $\{(x,y,e_i,e_i),(y,x,e_i,e_i)\}$ and line subbundles 
$\lambda_i$ with basis vectors $\{(x,y,e_i,-e_i),(y,x,-e_i,e_i)\}$.
Clearly, each $\eps_i$ is trivial and each $\lambda_i$ is isomorphic to the line bundle associated 
with the double cover $\tilde Q\to\tilde Q/t$.
It is also easy to see, by a similar construction, that the normal bundle of $\tilde N/t$ in 
$\tilde Q/t$ is isomorphic to $\eps\oplus\lambda$.
Hence $\tau_{\tilde N/t}\oplus\eps\oplus\lambda\simeq(n+1)\eps\oplus(n+1)\lambda$.
By adding a bundle $\lambda^\perp$ such that $\lambda\oplus\lambda^\perp$ is trivial
to both sides of the equation, we conclude that $\tau_{\tilde N/t}$ is stably isomorphic to $n\lambda$.
\end{proof}

\begin{lemma} \label{s-parallelizable2} 
Let $M$ be a stably parallelizable smooth $n$-manifold, $N$ a smooth $n$-manifold
and $f\:N\to M$ a self-transverse map.
Then the normal bundle $\nu_f$ of $\Delta_f/t$ in $\tilde N$ is stably isomorphic to $n\lambda$, where 
$\lambda$ is the line bundle associated with the double cover $\Delta_f\to\Delta_f/t$.
\end{lemma}

The case $M=\R^n$ is discussed in \cite{A3}*{\S3} (see also \cite{AM}*{\S4}).

\begin{proof} Since $N$ is stably parallelizable, $Q:=N\x\R$ is parallelizable \cite{KM}*{proof of Lemma 3.4}.
Let $R=N\x\R$ and let $F=f\x\id_\R\:R\to Q$.
Let $e_0,\dots,e_n$ be the basis vectors of a framing of the normal bundle of $\Delta_Q$ in $Q\x Q$ such that
the differential of the factor exchanging involution of $Q\x Q$ sends each $e_i$ to $-e_i$.
Then by pulling back the $e_i$ we obtain an equivariant framing of the normal bundle $\nu_F$ of $\Delta_F$ in 
$\tilde R$; that is, an isomorphism $\nu_F\simeq(n+1)\eps$ which is equivariant with respect to the differential 
of the factor exchanging involution on $\tilde R$ and the sign involution $v\mapsto -v$ on $(n+1)\eps$.
It follows that the normal bundle of $\Delta_F/t$ in $\tilde R/t$ splits as $\lambda_0\oplus\dots\oplus\lambda_n$, 
where each $\lambda_i$ is isomorphic to the line bundle associated with the double cover $\tilde Q\to\tilde Q/t$.
On the other hand, it is easy to see, by a similar construction, that the normal bundle of $\Delta_f/t$ in
$\Delta_F/t$ is isomorphic to $\lambda$. 
Hence $\nu_f\oplus\lambda\simeq(n+1)\lambda$.
By adding a bundle $\lambda^\perp$ such that $\lambda\oplus\lambda^\perp$ is trivial to both sides of the equation, 
we conclude that $\nu_f$ is stably isomorphic to $n\lambda$.
\end{proof}

Let $\Delta$ be a space with a free involution $t$.
Its {\it Yang index} is the maximal $k$ such that $w_1(\lambda)^k\ne 0$,
where $\lambda$ is the line bundle associated with the $2$-covering
$\Delta\to\Delta/t$.

\begin{lemma}\label{5.1} If $\Delta$ is an $n$-manifold with a free involution
$t$, it admits an equivariant map to $S^{n-1}$ with the antipodal involution
if and only if it has Yang index $\,<n$.
\end{lemma}

\begin{proof} The ``only if'' part is trivial, since $S^{n-1}$ has Yang
index $<n$.

The first obstruction to the existence of an equivariant map
$\Delta\to S^{n-1}$ is
$e(\lambda)^n\in H^n(\Delta/t;\Z_\lambda^{\otimes n})$, where $\Z_\lambda$ is
the integral local coefficient system corresponding to $\lambda$, and
$e(\lambda)\in H^1(\Delta/t;\Z_\lambda)$ is the Euler class of $\lambda$,
which is induced from the generator of $H^1(\R P^\infty;\Z_\lambda)\simeq\Z/2$
under a classifying map of $\lambda$; this obstruction is complete
(see \cite{M2}*{\S2}).
Since $\Delta$ is a manifold, $H^n(C;\Z_\lambda^{\otimes n})$ is either $0$ or
$\Z$ or $\Z/2$ for each component $C$ of $\Delta$.
So $H^n(\Delta/t;\Z_\lambda^{\otimes n})$ contains no elements of order $4$.
Hence $e(\lambda)^n=0$ iff its $\bmod 2$ reduction $w_1(\lambda)^n=0$. 
\end{proof}

\begin{lemma}\label{5.2} \cite{M1}*{proof of Corollary to Theorem 5}
Let $N$ be a $\Z/2$-homology $n$-sphere and $\Delta\subset\tl N$ an
$n$-dimensional closed $\Z/2$-invariant submanifold.
$\Delta$ has Yang index $\,<n$ if and only if every (compact) $\Z/2$-invariant 
component of $\Delta$ projects with an odd degree to the first factor of $N\x N$.
\end{lemma}

All results of \cite{M1} referred to in this section are proved there in
the smooth category, but remain true in the PL category by the same arguments, 
without substantial modifications (apart from using PL transversality in place of 
smooth transversality).

\begin{proof}[Proof of Theorem \ref{th3}] By Corollary \ref{stable} and 
Lemmas \ref{5.1} and \ref{5.2}, we only need to show that every $\Z/2$-invariant component of 
$\Delta_f$ projects with an even degree to the first factor of $N\x N$.
Under the assumptions of part (a), this follows immediately from
\cite{M1}*{Theorem 2} and \cite{M1}*{diagram ($*$) in the introduction}.

For (b), let $f_0\:N_0\to M_0$ be the map whose mapping cylinder is
the universal covering of the mapping cylinder of $f$.
(So $\pi_1(M_0)=1$ and $\pi_1(N_0)=\ker(f_*)$.)

Let us first consider the case where $\pi_1(M)$ is finite.
Each component $C$ of $\Delta_f$ is covered by some component $C_0$ of
$\Delta_{f_0}$ with finite degree.
By \cite{M1}*{Theorem 3}, $C_0$ projects with zero degree to the (first, say)
factor of $N_0\x N_0$.
Hence $C$ projects with zero degree to the first factor of $N\x N$.

Now if $\pi_1(M)$ is infinite, let $C$ be a component of $\Delta_f$.
Let $p$ be a regular point of $f$ and let $b$ be some point of
$S:=\widetilde{f^{-1}(p)}\cap C$.
If $\ell\subset C$ is a path with endpoints in $S$, let $\alpha_\ell\in\pi_1(M,p)$
be the class of its image under the composition
$\Delta_f\subset N\x N\xr{\pi}N\xr{f}M$.
Let $S_0$ be the set of those points of $S$ that can be joined with $b$ by
a path $\ell\subset C$ such that $\alpha_\ell=1$.
If $g_1,\dots,g_i\in\pi_1(M,p)$ and $S_i\subset S$ are defined and
$S_i\ne S$, pick a point $q\in S\but S_i$ and a path $\ell\subset C$ joining $q$
with $b$; set $g_{i+1}=\alpha_{\ell}$ and define $S_{i+1}$ to consist
of those points of $S$ that can be joined with $b$ by a path $\ell$ such that
$\alpha_\ell\in\{1,g_1,\dots,g_{i+1}\}$.
This process terminates at some finite stage $r\le |S|$.

The outcome is that $S=\widetilde{f^{-1}(p)}\cap C$ is now in bijection with
$\bigsqcup_{i\le r}\widetilde{f_0^{-1}(g_i\hat p)}\cap C_0$, where $\hat p$
is a lift of $p$, and $\hat C_0$ is the component of $\Delta_{f_0}$ that
covers $C$ and contains the point $\hat b$ that corresponds to $b$ under
the identification $f^{-1}(p)=f_0^{-1}(\hat p)$.
Indeed, the points of $g_{i+1}(\hat S_{i+1}\but\hat S_i)$ are in $\hat C_0$
and those of $g_{i+1}(\hat S\but\hat S_{i+1})$ are not in $\hat C_0$ by
the construction of $S_{i+1}$.
If a point $g_{i+1}(\hat q)$ of $g_{i+1}(\hat S_i)$ is in $\hat C_0$, then
$q$ can be joined with $b$ by a path $\ell_{qb}\subset C$ such that
$\alpha_{\ell_{qb}}=g_{i+1}$.
Hence every point of $S_{i+1}\but S_i$ can be joined with $q$ by a path
$\ell$ going first to $b$, then to $q$ via the reverse of $\ell_{qb}$, and
finally back to $b$ so that $\alpha_\ell=g_{i+1}g_{i+1}^{-1}g_j$ for some
$j\le i$, which is a contradiction.

As a consequence, the degree of the composition $C\to N\to M$, as computed at
$b$, equals $r$ times the degree of the projection $C_0\to N_0\to M_0$, as
computed at $\hat b,g_1\hat b,\dots,g_r\hat b$.
Here we understand that if $C_0$ and $M_0$ are non-compact, but
the composition is a proper map, the degree is defined via cohomology with
compact support (or locally-finite homology) and so may be nonzero.
\cite{M1}*{Theorem 3} was stated in the case where $N_0$ is compact, but
the proof works for proper maps of non-compact manifolds as well.
Thus $C_0\to N_0$ has degree $0$, and therefore so does either $C\to N$ or
$N\to M$.
In the former case the proof is completed similarly to the case of finite
$\pi_1(M)$, and in the latter we refer to part (a). 
\end{proof}

\appendix

\section{Stable smooth maps} \label{stable maps}

Continuous maps $f,g\:N\to M$ between $C^r$-manifolds are called {\it $C^r$-left-right equivalent}, where 
$r\in\{0,1,\dots,\infty\}$, if there exist $C^r$-self-homeomorphisms $\phi$ of $N$ and $\psi$ of $M$ such that 
the following diagram commutes:
\[\begin{CD}
N@>\phi>>N\\
@VfVV@VgVV\\
M@>\psi>>M.
\end{CD}\]
A smooth (i.e.\ $C^\infty$) map $f\:N\to M$ between smooth (i.e.\ $C^\infty$) manifolds is called 
{\it $C^r$-stable} if it has a neighborhood in $C^\infty(N,M)$ whose every member is $C^r$-left-right 
equivalent to $f$.
By a stable smooth map we mean a $C^\infty$-stable one. 

\begin{theorem}[Triangulation Theorem] \label{Shiota} 
Let $M$ and $N$ be smooth manifolds, where $N$ is compact.
Then there exists a dense open set $S\subset C^\infty(N,M)$ such that every $f\in S$ is $C^0$-stable and 
$C^0$-left-right equivalent to a PL map (with respect to some smooth triangulations of $M$ and $N$).
\end{theorem}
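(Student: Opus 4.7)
The plan is to combine two deep results from singularity theory. First, by the Thom--Mather topological stability theorem, for compact $N$ the set $S_0$ of $C^0$-stable smooth maps in $C^\infty(N,M)$ is dense in the Whitney topology. Openness of $S_0$ is automatic from the definition of $C^0$-stability: if $U$ is a neighborhood of $f$ consisting of maps $C^0$-left-right equivalent to $f$, then transitivity of $C^0$-left-right equivalence certifies via the same $U$ that every $g\in U$ is itself $C^0$-stable. So $S_0$ is automatically open, and the Thom--Mather theorem supplies density.

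Second, I would invoke Shiota's triangulation theorem: every $C^0$-stable smooth map $f\:N\to M$ with $N$ compact is $C^0$-left-right equivalent to a PL map with respect to suitable smooth triangulations of $N$ and $M$. The core of that argument proceeds via Thom--Mather stratification theory: a $C^0$-stable map admits a Whitney $(a)$-regular stratification satisfying Thom's condition $a_f$, making it topologically locally trivial along strata. One then constructs smooth triangulations of $N$ and $M$ compatible with these stratifications, and post-composes with stratum-preserving homeomorphisms of $N$ and $M$ that straighten $f$ into a simplex-wise affine map. Taking $S=S_0$ therefore delivers both conclusions at once.

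The main obstacle, and the genuine content of the statement, is the triangulation step: topological stability by itself only yields a dense set of nicely-behaved maps, and one must then promote each representative to an honestly PL map. This is where essentially all the work happens, since it requires stratified-space machinery to produce matched triangulations refining the canonical Thom--Boardman/Mather strata of $f$, together with a controlled isotopy argument that simultaneously straightens $f$ on the source and target sides. By contrast, the density-openness assertion is a formality once the Thom--Mather stability theorem is accepted as a black box, so in the write-up I would spend essentially no space on it and cite Shiota (and refer the reader to Verona's monograph on stratified mappings) for the triangulation half.
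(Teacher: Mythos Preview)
Your argument is correct and is precisely the classical route the paper itself mentions just before its proof: Thom--Mather gives a dense open set of $C^0$-stable maps, and Verona's theorem triangulates each of them. The paper then elects to record a second proof instead: it takes $S$ to be the set of maps multi-transverse to a particular Whitney stratification of jet space, quotes \cite{GWPL} for openness, density, and Thom-stratifiedness of such maps, and then invokes Shiota's theorem (which triangulates Thom stratified maps, not $C^0$-stable ones).

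A minor bibliographic point: you call ``Shiota's triangulation theorem'' the statement that $C^0$-stable maps are triangulable, but that is Verona's result \cite{Ve}; Shiota's theorem \cite{Shi} has the weaker hypothesis of being Thom stratified. Since you cite Verona anyway, this is harmless. The substantive difference between the two routes is only in what gets quoted as a black box: you lean on Thom--Mather stability plus Verona, whereas the paper leans on the \cite{GWPL} transversality package plus Shiota. Both are equally short once the external theorems are granted.
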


R. Thom and J. Mather proved that the set of $C^0$-stable smooth maps $N\to M$ contains a dense open subset of 
$C^\infty(N,M)$ (see \cite{GWPL}), and A. Verona proved that $C^0$-stable smooth maps are triangulable \cite{Ve}.
Here is another approach:

\begin{proof} By Shiota's theorem \cite{Shi} a smooth map $f\:N\to M$ is $C^0$-left-right equivalent to 
a PL map if it is Thom stratified.
By \cite{GWPL}*{IV.3.3} $f$ is Thom stratified if it belongs to the set $S$ of smooth maps that are 
multi-transverse to a certain Whitney stratification of a suitable jet space.
By \cite{GWPL}*{IV.1.1 and IV.4.1} $S$ is open and dense in $C^\infty(N,M)$.
Also, all members of $S$ are $C^0$-stable \cite{GWPL}*{IV.4.4}.
\end{proof}

A smooth map $f\:N^n\to M^m$, $n\le m$, is called a {\it corank one} map, if $\dim(\ker df_x)\le 1$ at 
every point $x\in N$.
In particular, every smooth fold map is a corank one map.
The set of corank one maps is open in $C^\infty(N,M)$.%
\footnote{Indeed, $f$ is a corank one map if and only if $j^1f(N)$ is disjoint from the closed subset 
$\bigcup_{i\ge 2}\Sigma^i$ of $J^1(N,M)$ (see \cite{GG}).}
If $2m\ge 3(n-1)$, corank one maps are also dense in $C^\infty(N,M)$ (see \cite{GG}*{VI.5.2}).

\begin{theorem}[Corank One Stability Theorem] \label{Morin}
Let $M^m$ and $N^n$ be smooth manifolds, where $N$ is compact, $m\ge n$.
Let $A$ be the set of all corank one maps $N\to M$ and $S$ be the set of all $C^\infty$-stable maps $N\to M$.
Then $S\cap A$ is open and dense in $A$.
\end{theorem}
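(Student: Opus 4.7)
The plan is to combine Mather's characterization of $C^\infty$-stability via multi-transversality with Morin's classification of the singularities of corank one maps. Openness of $S\cap A$ in $A$ is automatic from the general theorem that, when $N$ is compact, $S$ is open in $C^\infty(N,M)$ in the Whitney $C^\infty$-topology (combined with the openness of $A$ noted just above), so the only real content is density.

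For density, I fix $f\in A$ together with a neighborhood $U$ of $f$ in $C^\infty(N,M)$; since $A$ is open I may shrink $U$ so that $U\subset A$. The singularities that a corank one map can exhibit are only the Morin singularities $\Sigma^{1,0}$, $\Sigma^{1,1,0}$, $\Sigma^{1,1,1,0}$, and so on, and each such type corresponds to a smooth submanifold of finite codimension in the appropriate jet space $J^r(N,M)$. Together with their multi-jet counterparts, which record configurations of several singular points sharing an image in $M$, these form a locally finite family of smooth submanifolds of the multi-jet spaces. Applying Thom's multi-jet transversality theorem to this family, I obtain $g\in U$ whose multi-jet extension is multi-transverse to all of them.

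The final step is to conclude $g\in S$. For corank one maps, multi-transversality to the Morin strata is equivalent to infinitesimal stability in the sense of Mather; no ``nice dimensions'' hypothesis enters, since the Morin normal forms themselves are stable in every pair of dimensions $(n,m)$ with $m\ge n$. Mather's theorem then upgrades infinitesimal stability to $C^\infty$-stability, producing $g\in S\cap A$ in any preassigned neighborhood of $f$.

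I expect the main obstacle to be precisely the statement of the previous paragraph. One must verify both that the Morin normal forms exhaust the local models of corank one map-germs, and that multi-transversality to the corresponding jet strata is enough to imply infinitesimal stability. These are classical but non-trivial pieces of singularity theory, which I would take as known consequences of Morin's and Mather's work (and of the account in \cite{GWPL}) rather than attempt to reprove here.
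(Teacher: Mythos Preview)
Your proposal is correct and follows essentially the same route as the paper: perturb to a nearby corank one map that is transverse to the Thom--Boardman (equivalently Morin) strata and has normal crossings, invoke Morin's normal forms to get stable (hence infinitesimally stable) germs, use the normal-crossings condition to pass to infinitesimally stable multi-germs, and apply Mather's theorem to conclude $C^\infty$-stability. The paper is only slightly more explicit in naming the dense set (Thom--Boardman maps with normal crossings) and in citing the precise references \cite{GG}, \cite{Mo}, \cite{MaIV} for each step of the chain germ-stable $\Rightarrow$ infinitesimally stable multi-germs $\Rightarrow$ infinitesimally stable $\Rightarrow$ stable.
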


This result is well-known (see \cite{Ka}*{\S2.1}, \cite{GG}*{VII.6.4}), but I did not find a conclusive 
writeup of the proof in the literature. 

\begin{proof} Let $T$ be the set of all Thom--Boardman maps $N\to M$ that are self-transverse 
(=``have normal crossings'' in the terminology of \cite{GG}).
Then $T$ is dense in $C^\infty(N,M)$ \cite{GG}*{VI.5.2}.
Since $A$ is open, $T\cap A$ is dense in $A$.
For each $f\in T\cap A$ we have $\Sigma_f^{i_1,\dots,i_k}=\emptyset$ if $i_1>1$, and hence
(see \cite{Bo}*{2.18}) also if some $i_j>1$.
Hence each $x\in N$ belongs to some $\Sigma_f^{1,\dots,1,0}$ (which includes $\Sigma_f^0$).
Then by Morin's theorem \cite{Mo} $f$ has stable germs at all $x\in N$.
In particular, they are infinitesimally stable (see definition in \cite{GG}).
Since $f$ is also self-transverse, it has infinitesimally stable multi-germs at all $y\in M$
\cite{MaIV}*{1.6}.
Hence $f$ is infinitesimally stable (see e.g.\ \cite{GG}*{V.1.5 and V.1.6}) and therefore stable 
(see \cite{GG}).
Thus $T\cap A\subset S$.
Since $T\cap A$ is dense in $A$, so is $S\cap A$.
Clearly, $S$ is open in $C^\infty(N,M)$, so $S\cap A$ is open in $A$.
\end{proof}

\section{Stable PL maps} \label{pl-maps}

\subsection{PL transversality}
A subpolyhedron $Y$ of a polyhedron $X$ is said to be {\it collared} in $X$ if some neighborhood of $Y$ in $X$ 
is homeomorphic to $Y\x [0,1]$ by a PL homeomorphism that extends $\id\:Y\to Y\x\{0\}$.

A PL map $f\:P\to Q$ between polyhedra is said to be {\it PL transverse} to a triangulation $L$ of $Q$ if 
$f^{-1}(\partial\sigma)$ is collared in $f^{-1}(\sigma)$ for each simplex $\sigma$ of $L$.
The map $f$ is called {\it PL transverse} to a subpolyhedron $R$ of $Q$ if $f$ is PL transverse to 
some triangulation $L$ of $Q$ such that $R$ is triangulated by a subcomplex of $L$.

Let $K$ and $L$ be simplicial complexes.
A {\it semi-linear map} $f\:K\to L$ is a PL map $|K|\to |L|$ between their underlying polyhedra that sends 
every simplex of $K$ into some simplex of $L$ by an affine map.
Every semi-linear map $f\:K\to L$ determines a monotone map $[f]$ between the face posets%
\footnote{By ``face poset'' we mean the poset of all nonempty faces.}
of $K$ and $L$, defined by sending every simplex $\sigma$ of $K$ to the minimal simplex of $L$ 
containing $f(\sigma)$.
Two semi-linear maps $f,g\:K\to L$ will be called {\it combinatorially equivalent} if $[f]=[g]$, or in 
other words if $f^{-1}(\sigma)=g^{-1}(\sigma)$ for every simplex $\sigma$ of $L$.

If $f,g\:K\to L$ are combinatorially equivalent semi-linear maps, then $f$ is PL transverse to $L$ if and only if 
$g$ is PL transverse to $L$.
In this case the monotone map $[f]$ between the face posets of $K$ and $L$ is called a {\it stratification map}.
If $L'$ is a simplicial subdivision of a simplicial complex $L$, and $\id\:|L'|\to|L|$ is regarded as
a semi-linear map $s\:L'\to L$, then the monotone map $[s]$ is a stratification map.

Since composition of stratification maps is a stratification map \cite{M4}*{13.4} (see also
\cite{BRS}*{``Amalgamation'' on p.\ 23 and ``Extension to polyhedra'' on p.\ 35}), a PL map $P\to|L|$
that is transverse to $L'$ must also be transverse to $L$.
Conversely, if a PL map $f\:P\to |L|$ is transverse to $L$, it is PL-left-right equivalent%
\footnote{The definition of PL-left-right equivalence repeats that of $C^r$-left-right equivalence (see Appendix \ref{stable maps}), 
with ``$C^r$'' replaced by ``PL'' throughout.}
to a PL map that is transverse to $L'$ (see \cite{BRS}*{Theorem II.2.1 and ``Extension to polyhedra'' on p.\ 35});
moreover, the equivalence is via PL homeomorphisms $|L|\to |L|$ and $P\to P$ that preserve the simplexes of $L$ and
their preimages.
Using a generalization of these results from simplicial complexes to cone complexes (see \cite{BRS}, \cite{M4})
and a result of M. M. Cohen%
\footnote{If $f\:K_1\to K_2$ is a simplicial map between simplicial complexes, then $|f|\:|K_1|\to|K_2|$ is transverse to
the dual cone complex $K_2^*$ (see \cite{M4}*{16.2}).}
it is not hard to show (see \cite{BRS}*{\S II.4}) that every PL map $f\:P\to |L|$ is PL-left-right equivalent to a PL map 
that is transverse to $L$.

\subsection{Stable PL maps} \label{stable-pl}
If $K$ is a simplicial complex, a {\it linear map} $f\:K\to\R^m$ is a PL map $|K|\to\R^m$ whose restriction
to every simplex of $K$ is the restriction of an affine map.
Let $C(K,\R^m)$ be the subspace of $C^0(|K|,\R^m)$ consisting of all linear maps.
Let $S(K,\R^m)$ be the set of all linear maps $f\:K\to\R^m$ such that $f$ has a neighborhood in $C(K,\R^m)$ 
whose every member is PL-left-right equivalent to $f$.

More generally, given simplicial complexes $K$ and $L$ and a monotone map $\phi$ between their face posets, 
let $C(\phi)$ be the subspace of $C^0(|K|,|L|)$ consisting of all semi-linear maps $f\:K\to L$ such that 
$[f]=\phi$.
Let $S(\phi)$ be the set of all semi-linear maps $f\:K\to L$ such that $f$ has a neighborhood 
in $C(\phi)$ whose every member is PL-left-right equivalent to $f$.
If $L$ triangulates $\R^m$ and $\phi$ is a constant map onto some $m$-simplex, then $C(\phi)$ is 
an open subspace of $C(K,\R^m)$, and $S(\phi)=S(K,\R^m)\cap C(\phi)$.

\begin{theorem} \label{PL stability}
Let $K$ and $L$ be simplicial complexes, where $K$ is finite, and $\phi$ be a monotone map between their 
face posets.
Then $S(\phi)$ is open and dense in $C(\phi)$.
\end{theorem}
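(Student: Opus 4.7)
\emph{Openness.} If $f \in S(\phi)$ has a neighborhood $U \subset C(\phi)$ whose members are all PL-left-right equivalent to $f$, then for any $g \in U$ the same $U$ is a neighborhood of $g$, and transitivity of PL-left-right equivalence shows every $h \in U$ is equivalent to $g$. Hence $g \in S(\phi)$, so $S(\phi)$ is open. The remaining task is density, which I plan to attack by partitioning $C(\phi)$ into combinatorial strata and showing that the top-dimensional ones are open and exhaust a dense subset of $C(\phi)$.

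\emph{Finite-dimensional parameter space and combinatorial types.} The condition $[f] = \phi$ forces $f(v) \in \Int(\phi(v))$ for every vertex $v$ of $K$, and the additional open condition ``$f(\sigma)$ is not contained in a proper face of $\phi(\sigma)$'' identifies $C(\phi)$ with an open subset of the finite-dimensional cell $\prod_v \Int(\phi(v))$, the product running over vertices $v$ of $K$. To each $f \in C(\phi)$ I would associate a combinatorial invariant $\tau(f)$ recording, for every simplex $\sigma$ of $K$, the polyhedral decomposition of $\sigma$ obtained by pulling back the face structure of $\phi(\sigma)$ along $f|_\sigma$, together with the combinatorial multi-intersection pattern of the images $f(\sigma_1), \dots, f(\sigma_r)$ inside $|L|$ for all tuples of simplices of $K$. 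Since $K$ is finite and $\phi$ uses only finitely many simplices of $L$, there are finitely many possible values of $\tau$, and each level set is a semialgebraic subset of $C(\phi)$ cut out by polynomial equations and strict polynomial inequalities. The union $\Sigma$ of those level sets whose dimension is strictly less than $\dim C(\phi)$ is closed and nowhere dense, so its complement $G := C(\phi) \setminus \Sigma$ is open and dense in $C(\phi)$.

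\emph{Realization within a stratum and main obstacle.} To conclude $G \subset S(\phi)$, fix $f \in G$, choose a small ball $U \subset G$ around $f$ on which $\tau$ is constant, and let $g \in U$. The equality $\tau(f) = \tau(g)$ provides canonical combinatorial isomorphisms between the pullback subdivisions of $|K|$ for $f$ and for $g$ extending $\id_K$, and between the induced subdivisions of $|L|$ for $f$ and for $g$ extending $\id_L$. I would realize these cell-by-cell using affine maps on individual cells and the Alexander coning trick to glue across stars, obtaining PL self-homeomorphisms that intertwine $f$ with $g$; hence $U \subset S(\phi)$, yielding density. The main obstacle is precisely this final gluing step: one must verify that the local affine matchings agree on common faces across the stars of simplices, which in turn requires $\tau$ to be rich enough that all local choices are canonical. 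I expect this to reduce to an application of polyhedral general position together with a cell-by-cell Alexander construction, analogous to the combinatorial core of Mather's stability theorem in the smooth category.
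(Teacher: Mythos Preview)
Your openness argument is fine and matches the paper's. Your density argument heads in the right direction but is incomplete in exactly the places you flag, and the paper resolves those places by a more concrete device that avoids your ``main obstacle'' entirely.

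The paper does not stratify $C(\phi)$ by an abstract combinatorial type $\tau$. Instead it defines $G(\phi)\subset C(\phi)$ explicitly: $f\in G(\phi)$ if for every simplex $\sigma$ of $L$ the images of the vertices of $f^{-1}(\sigma)$ are in affine general position in the affine span of $\sigma$ (no $r+2$ of them lie in a common $r$-flat for $r<\dim\sigma$). This is a finite conjunction of open dense polynomial conditions, so $G(\phi)$ is open and dense without any appeal to semialgebraic stratification. For $f\in G(\phi)$ one then invokes the \emph{standard} construction (see \cite{Ze}) of subdivisions $K'_f$ of $K$ and $L'_f$ of $L$ with respect to which $f$ becomes simplicial. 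The point is that under the general-position hypothesis the simplicial isomorphism types of $K'_f$ and $L'_f$ are locally constant in $f$; hence for $g$ near $f$ there are simplicial isomorphisms $K'_f\cong K'_g$ and $L'_f\cong L'_g$, and these already \emph{are} the desired PL self-homeomorphisms of $|K|$ and $|L|$ conjugating $f$ to $g$. No cell-by-cell Alexander gluing is needed, because a vertex bijection of simplicial complexes extends linearly to a global PL homeomorphism; the compatibility across faces is automatic.

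Your approach is not wrong in spirit, but your $\tau$ is doing double duty: it must be coarse enough to take finitely many values and have open top strata, yet fine enough that equality of $\tau$ yields a canonical PL conjugacy. You never pin down a $\tau$ with both properties, and the ``gluing of local affine matchings'' you worry about is a genuine issue unless the matchings come from a single global simplicial isomorphism. The paper's general-position condition plus the standard subdivision supplies exactly that global object, which is why the proof there is short.
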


\begin{proof} The definition of $S(\phi)$ implies that it is open in $C(\phi)$.

Let us call a map $\nu$ from a finite set $F$ to an affine space $V$ a {\it general position map} if
for each $G\subset F$ the affine subspace of $V$ spanned by $\nu(G)$ is of dimension $\min(\#G-1,\dim V)$.
In other words, $\nu$ is required to be injective, unless $\dim V=0$; not to send any three points into
the same affine line, unless $\dim V\le 1$; not to send any four points into the same affine plane, 
unless $\dim V\le 2$; and so on.
Each of these conditions determines an open and dense subset of $C^0(F,V)$, and hence their intersection,
which is the set of all general position maps $F\to V$, is also open and dense in $C^0(F,V)$.

Let $G(\phi)$ be the subset of $C(\phi)$ consisting of all semi-linear maps $f\:K\to L$ such that for each 
simplex $\sigma$ of $L$ the restriction of $f$ to set of vertices of the subcomplex $f^{-1}(\sigma)$ of $K$ 
is a general position map into the affine space spanned by $\sigma$.
Since $K$ is finite, it is easy to see that $G(\phi)$ is open and dense in $C(\phi)$.

For every semi-linear map $f\:K\to L$ there is a standard construction yielding subdivisions $K'_f$, $L'_f$ 
of $K$, $L$ with respect to which $f$ is simplicial (see \cite{Ze}).
It is not hard to see that if $f\in G(\phi)$, then there is a neighborhood $U$ of $f$ in $C(\phi)$ 
such that $K'_f$ and $L'_f$ are isomorphic (as simplicial complexes) to $K'_g$ and $L'_g$ for every $g\in U$.
Then $g$ is PL-left-right equivalent to $f$.
Hence $G(\phi)\subset S(\phi)$, and therefore $S(\phi)$ is dense in $C(\phi)$. 
\end{proof}

\begin{corollary} If $K$ is a simplicial complex, $S(K,\R^m)$ is open and dense in $C(K,\R^m)$.
\end{corollary}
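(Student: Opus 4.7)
The plan is to deduce this corollary directly from Theorem \ref{PL stability} by a local-to-global argument, covering $C(K,\R^m)$ by open subspaces of the form $C(\phi)$ and invoking the theorem on each.

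Openness of $S(K,\R^m)$ in $C(K,\R^m)$ is immediate from the definition: if $f\in S(K,\R^m)$ admits a neighborhood in which every map is PL-left-right equivalent to $f$, then the same neighborhood witnesses membership in $S(K,\R^m)$ for each of its points (with equivalences composed via $f$).

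For density, I would argue as follows, taking $K$ to be finite (as in Theorem \ref{PL stability}). Fix any $f\in C(K,\R^m)$. Since $|K|$ is compact, $f(|K|)$ is compact, and hence contained in the interior of some $m$-simplex $\sigma\subset\R^m$. Extend $\sigma$ to a triangulation $L$ of $\R^m$ and let $\phi_\sigma$ be the constant monotone map from the face poset of $K$ to that of $L$ sending every simplex of $K$ to $\sigma$. By the paragraph immediately preceding Theorem \ref{PL stability}, $C(\phi_\sigma)$ is an open subspace of $C(K,\R^m)$ and $S(\phi_\sigma)=S(K,\R^m)\cap C(\phi_\sigma)$. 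In particular, $C(\phi_\sigma)$ is an open neighborhood of $f$.

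Now by Theorem \ref{PL stability}, $S(\phi_\sigma)$ is dense in $C(\phi_\sigma)$, so $S(K,\R^m)$ meets every nonempty open subset of $C(\phi_\sigma)$, and hence every open neighborhood of $f$ in $C(K,\R^m)$ that is contained in $C(\phi_\sigma)$. Since $f$ was arbitrary, $S(K,\R^m)$ is dense in $C(K,\R^m)$. I do not anticipate any real obstacle here: the argument is essentially an unpacking of quantifiers, resting on the elementary fact that the image of a linear map from a finite simplicial complex fits inside the interior of a single top-dimensional simplex of $\R^m$.
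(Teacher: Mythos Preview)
Your argument is correct and is essentially the deduction the paper leaves implicit: the Corollary is stated without proof, and the paragraph preceding Theorem \ref{PL stability} already records that for $\phi$ constant onto an $m$-simplex one has $C(\phi)$ open in $C(K,\R^m)$ and $S(\phi)=S(K,\R^m)\cap C(\phi)$, which together with Theorem \ref{PL stability} gives exactly your covering argument. One small remark: you rightly add the hypothesis that $K$ is finite (needed both to invoke Theorem \ref{PL stability} and to fit $f(|K|)$ inside a single simplex), which the paper's statement of the Corollary omits but clearly intends.
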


A PL map $f\:P\to Q$ between polyhedra will be called {\it stable} if there exist triangulations $K$, $L$ of 
$P$, $Q$ and a stratification map $\phi$ between their face posets such that $f$ is PL-left-right
equivalent to a member of $S(\phi)$.
In particular, stable PL maps $|K|\to\R^m$ include all members of $S(K,\R^m)$.

\begin{remark} (a) There is an alternative approach to stable PL maps.
By using the $C^1$ topology on semi-linear maps (see \cite{Mu}) one can do without fixing 
a triangulation of the domain.
However, a triangulation of the target still needs to be fixed, and hence PL transversality still needs 
to be used in this approach.

(b) A classical approach to general position arguments for PL maps from a compact polyhedron
to a PL manifold $M$ is to cover $M$ by coordinate charts, and achieve desired general position properties
separately in each chart (see \cite{Ze}).
Since the transition maps are not linear but PL, it seems to be difficult to formulate this approach in
invariant terms (such as stability), even for a fixed atlas. 
\end{remark}

\subsection{Examples of stable PL maps}

\begin{proposition} \label{emb-stable}
Every PL embedding $f\:P\to Q$ between polyhedra, where $P$ is compact, is stable.
\end{proposition}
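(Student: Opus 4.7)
The plan is to find triangulations $K$ of $P$ and $L$ of $Q$, together with a stratification map $\phi$ between their face posets, such that $f$ (after at most a PL-left-right change of coordinates) belongs to $S(\phi)$. First, fix any triangulation $L$ of $Q$. By the result recalled at the end of the PL-transversality subsection --- that every PL map into $|L|$ is PL-left-right equivalent to one transverse to $L$ --- I may replace $f$ by a PL-left-right equivalent PL embedding, still denoted $f$, that is PL transverse to $L$. Pick a triangulation $K$ of $P$ for which $f\:K\to L$ is semi-linear, and set $\phi=[f]$; by the definition of stratification map, this $\phi$ is indeed a stratification map.

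It then remains to verify $f\in S(\phi)$: some neighborhood of $f$ in $C(\phi)$ consists of maps PL-left-right equivalent to $f$. Given $g\in C(\phi)$ close to $f$, the affine map $g|_\sigma\:\sigma\to\phi(\sigma)$ is a small perturbation of the affine embedding $f|_\sigma$, and so $g|_\sigma$ is again an affine embedding (since affinely independent vertex configurations form an open subset of vertex tuples). To establish global injectivity I would argue by contradiction and compactness: were there arbitrarily close $g_n\to f$ admitting distinct $x_n\ne y_n\in P$ with $g_n(x_n)=g_n(y_n)$, subsequences would produce $x_n\to x$ and $y_n\to y$, and injectivity of $f$ forces $x=y$; the shared combinatorial pattern $[g_n]=[f]$ then leaves no room, in view of simplex-wise injectivity, for such collisions. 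Hence $g$ is a PL embedding close to $f$, and the standard PL ambient isotopy theorem for close PL embeddings of a compact polyhedron yields a PL self-homeomorphism $\psi$ of $Q$ with $\psi\circ f=g$, providing the required PL-left-right equivalence (with the identity map on the source).

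The main obstacle I expect is this global injectivity of $g$ for $g$ near $f$ in $C(\phi)$: each closed simplex of $K$ is clearly still embedded by $g$, but one must rule out accidental collisions between distinct simplices under small perturbations. The combinatorial rigidity $[g]=[f]$ pins down which simplex of $L$ receives each simplex of $K$, and the embeddedness of $f$ should then translate into a combinatorial framework that excludes collisions for $g$ close enough to $f$; still, a careful compactness argument is required, particularly near the intersections of closed simplices of $K$ that share a common face.
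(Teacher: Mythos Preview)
Your setup (make $f$ transverse to some $L$, pick $K$ so that $f$ is semi-linear, set $\phi=[f]$, and show $f\in S(\phi)$) is exactly the paper's. But two steps of your argument are left as genuine gaps.

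\textbf{Global injectivity of $g$.} You acknowledge that the delicate case is when $x_n\to x$ and $y_n\to y=x$ lie in different closed simplexes $\sigma,\tau$ of $K$ sharing a face $\rho=\sigma\cap\tau$; simplex-wise injectivity alone does not rule this out, and your compactness sketch stops precisely here. The paper resolves this by a direct combinatorial reduction rather than a limiting argument: writing $\sigma=\rho*\sigma'$ and $\tau=\rho*\tau'$, one checks that if $g(p)=g(q)$ with $p\in\sigma$, $q\in\tau$, then either $g(\sigma)$ meets $g(\tau')$ or $g(\sigma')$ meets $g(\tau)$. Since $\sigma\cap\tau'=\emptyset$ and $\sigma'\cap\tau=\emptyset$, this contradicts the already-established fact that $g$ keeps images of \emph{disjoint} closed simplexes of $K$ disjoint (a finite collection of open conditions, hence valid for $g$ close to $f$). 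This reduction is short and you should supply it.

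\textbf{Ambient isotopy.} Your appeal to ``the standard PL ambient isotopy theorem for close PL embeddings of a compact polyhedron'' is the more serious gap: such theorems require the target to be a PL \emph{manifold}, and $Q$ is merely a polyhedron. The paper avoids this by interpolating between $f$ and $g$ through semi-linear maps $h_0=f,h_1,\dots,h_k=g$ that differ on one vertex at a time, all satisfying $[h_i]=\phi$. Each stage moves a single vertex within a fixed open simplex of $L$, and such a motion is covered by a PL ambient self-homeomorphism of $|L|$ (extend conically/simplicially from the moving vertex) even when $Q$ is not a manifold. You need this vertex-by-vertex argument, or something equivalent, to conclude.
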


\begin{proof} Upon replacing $f$ by a PL-left-right equivalent embedding we may assume that it is PL transverse
to some triangulation $L$ of $Q$.
Let $K$ be a triangulation of $P$ such that $f$ is a semi-linear map $K\to L$.
Let $g\:K\to L$ be a semi-linear map with $[g]=[f]$ that is $\eps$-close to $f$ in the sup metric.
Since $K$ is finite and $f$ sends disjoint simplexes of $K$ to disjoint subsets of $Q$, so does $g$,
as long as $\eps$ is sufficiently small.
But if $g(p)=g(q)$, where the minimal simplexes $\sigma$, $\tau$ of $K$ containing $p$ and $q$ share
a common face $\rho=\sigma\cap\tau$, then we have $\sigma=\rho*\sigma'$ and $\tau=\rho*\tau'$ and it is 
easy to see that either $g(\sigma)$ meets $g(\tau')$ or $g(\sigma')$ meets $g(\tau)$.
Thus $g$ is an embedding.
By a similar argument, the linear homotopy between $f$ and $g$ is an isotopy.
Moreover, if $h_i$ denotes the semi-linear map $K\to L$ which agrees with $g$ on the first $i$ vertices of $K$
and with $f$ on the remaining ones, so that $h_0=f$ and $h_k=g$, where $k$ is the number of vertices of $K$,
then by similar arguments each $h_i$ is an embedding, and the linear homotopy between $h_i$ and $h_{i+1}$
is an isotopy.
Each $[h_i]=[f]$, hence every such isotopy is covered by a PL ambient isotopy (even if $Q$ is not a manifold).
Thus $g$ is PL-left-right equivalent to $f$.
\end{proof}

\begin{example} Let $N^n$ and $M^m$ be PL manifolds, where $N$ is compact and $m\ge n$, and let 
$f\:N\to M$ a PL map.
As long as $f$ is non-degenerate, for each $x\in N$ there is a PL map $\lk(x,f)\:\lk(x,N)\to\lk\big(f(x),M\big)$, 
which is well-defined up to PL-left-right equivalence; and for each $y\in M$ there is a PL map 
$\lk(y,f)\:\bigsqcup_{f(x)=y}\lk(x,N)\to\lk(y,M)$, which is also well-defined up to PL-left-right equivalence.

(a) If $m\ge 2n+1$, it follows from Proposition \ref{emb-stable} that $f$ is stable if and only if it is
an embedding.

(b) If $m=2n$, it follows by similar arguments that $f$ is stable if and only if it is an immersion 
(i.e., locally injective) with a finite set $\Delta$ of transverse double points (i.e.\ points $y\in M$ such 
that $\lk(y,f)$ is PL-left-right equivalent, not necessarily preserving the orientations, to the Hopf link 
$\partial I^n\x\{0\}\sqcup\{0\}\x\partial I^n\subset\partial(I^n\x I^n)$, where $I=[-1,1]$).
Let us note that stable maps $N^2\to M^4$ may be locally knotted at finitely many points of 
$N\but f^{-1}(\Delta)$.
Nevertheless, stable maps $N^2\to\R^4$ have a normal Euler class \cite{BJ}.

(c) If $m=2n-1$, $n>2$, similar techniques work to show that $f$ is stable if and only if $M$ contains
a finite subset $\Sigma$ such that $\lk(y,f)$ is a stable PL map $S^{n-1}\to S^{2n-2}$ (see (b)) for each 
$y\in\Sigma$, and $f|_{\dots}\:N\but f^{-1}(\Sigma)\to M\but\Sigma$ is an immersion with an embedded curve 
$\Delta$ of transverse double points (i.e.\ points $y\in M$ such that $\lk(y,f)$ is PL-left-right equivalent 
to the suspension over the Hopf link, $S^0*S^{n-2}\sqcup S^0*S^{n-2}\to S^0*S^{2n-3}$).
Let us note that stable PL maps $N^3\to M^5$ may be locally knotted at points of an embedded finite graph 
$G\subset N\but f^{-1}(\Delta\cup\Sigma)$.

(c$'$) If $(n,m)=(2,3)$, similarly $f$ is stable if and only if $M$ contains disjoint finite subsets $\Sigma$ 
and $T$ such that $\lk(y,f)$ is a stable PL map $S^1\to S^2$ for each $y\in\Sigma$ and is PL-left-right equivalent 
to the Borromean ornament (see \cite{M5}) for each $y\in T$; and 
$f|_{\dots}\:N\but f^{-1}(\Sigma\cup T)\to M\but(\Sigma\cup T)$ is an immersion with an embedded curve 
$\Delta$ of transverse double points.

(c$''$) If $(n,m)=(1,1)$, it is easy to see that $f$ is stable if and only if $N$ contains a finite subset 
$S$ such that $f|_S$ is an embedding and $f|_{N\but S}$ is an immersion.

(d) If $(n,m)=(2,2)$, it is not hard to see that $f$ is stable if and only if $N$ contains an embedded finite
graph $G$ with vertex set $V$ such that $f|_V$ is an embedding, $f|_{G\but V}$ is a stable PL map into
$M\but f(V)$ (see (b)), $f|_{N\but G}$ is an immersion, and $\lk(x,f)$ is a stable PL map $S^1\to S^1$ (see 
(c$''$)) for each $x\in V$ and is PL-left-right equivalent to the suspension over some map $S^0\to S^0$ 
for each $x\in G\but V$.
\end{example}

\begin{example}
Let $P^n$ be a polyhedron and $f\:P\to\R$ a PL map.
For each $x\in P$ let $L^+(x)=\lk\big(x,f^{-1}([y,\infty))\big)$ and
$L^-(x)=\lk\big(x,f^{-1}((-\infty,y])\big)$, where $y=f(x)$.
Thus $L^+(x)\cup L^-(x)=\lk(x,P)$ and $L^+(x)\cap L^-(x)$ coincides with
$L^0(x):=\lk\big(x,f^{-1}(y)\big)$.
Let us call $f$ {\it link-regular} at $x$ if $L^0(x)$ is collared in $L^+(x)$ and in $L^-(x)$, 
or in other words if $f|_{\lk(x,P)}$ is PL transverse to $\{f(x)\}$.
Let us call $f$ {\it regular} at $x$ if each of $L^+(x)$ and $L^-(x)$ is PL homeomorphic to the cone over 
$L^0(x)$ keeping $L^0(x)$ fixed, or equivalently (see \cite{M4}*{12.3}) if $f|_{\st(x,P)}$ is PL transverse 
to $\{f(x)\}$.
Let us call $f$ a {\it link-submersion} (resp.\ a {\it PL submersion}) if it is link-regular (resp.\ regular)
at all points $x\in P$.

It is not hard to see that the following conditions are equivalent for a PL map $P\to\R$, where $P$ is
a compact polyhedron:
\begin{enumerate}
\item $f$ is stable;
\item $f$ is a link-submersion and $P$ contains a finite set $S$ such that $f|_S$ is 
an embedding and $f|_{P\but S}$ is a PL submersion;
\item $f$ is PL-left-right equivalent to a linear map $K\to\R$ that embeds every $1$-simplex.
\end{enumerate}
Stable maps $P\to\R$ have been used in discrete differential geometry \cite{Ba} and in discrete 
Morse theory \cite{Bl}.
\end{example}

\begin{remark} The theory of stable PL maps, as outlined above, is clearly in a very early stage of
development.
In particular, implicit in the definition of a stable PL map are several questions about the status
of the obvious modifications of this definition.
One may also wonder about a multi-$0$-jet transversality theorem (cf.\ \S\ref{2multi0jet-transversality} below) 
for stable PL maps.
Presumably this situation has some similarity with the state of the theory of stable smooth maps in the times 
of Whitney and Thom, i.e.\ before Boardman, Mather and others who brought more clarity to that field.
\end{remark}

\section{Extended $2$-Multi-$0$-Jet Transversality} \label{sec-xx}

The main result of this Appendix, Corollary \ref{extended theorem}, addresses the case $s=2$, $r=0$ of the informal problem 
raised by C. T. C. Wall in 1971:
``Is there any natural way to `fill the hole along the diagonal' [in the $s$-multi-$r$-jet transversality theorem] and get
a transversality theorem in the completed space (so one could, for example,
combine injectivity [of a generic smooth map $N^n\to M^m$ in the range $m\ge 2n+1$]
with the immersion result [in the range $m\ge 2n$])?'' \cite{Wa1}*{p.~192}.

One special case of our Corollary \ref{extended theorem} was essentially proved by F. Ronga, but our proof 
of this case is somewhat different; in fact, we give two proofs somewhat different from each other and from Ronga's
(see Theorem \ref{xx-0} and subsequent remarks).

An application of another special case of Corollary \ref{extended theorem} is obtained in \S\ref{lifting lemma},
and is one of the results of Appendix \ref{sec-xx} that are applied in the main part of the paper.

The general case of Wall's problem will hopefully be addressed in a future paper by the author.
A related problem is also mentioned in a 1976 paper by Wall \cite{Wa2}*{p.~761}.

\subsection{Weakly generic maps}\label{w-gen}
A subset of a topological space $X$ is called {\it massive} if it is a countable intersection of dense open sets.
By Baire's category theorem, a massive subset of a complete metric space is dense.
In particular, since $C^\infty(N,M)$ with the metric (compact-open-like) topology is completely metrizable 
(see \cite{Hi}*{discussion following 3.4.4}), its massive subsets are dense in it.
Although $C^\infty(N,M)$ with the strong topology (also known as the Whitney topology or the Mather topology) 
is not metrizable if $N$ is non-compact, its massive subsets are also dense in it 
(see \cite{Hi}*{discussion following 3.4.4}, \cite{GG}*{II.3.3}).
Let us note that for compact $N$, the two topologies on $C^\infty(N,M)$ coincide (see \cite{Hi}).
A key technical advantage of the strong topology is that if $W$ is a closed subset of $M$, then
the set of smooth maps $N\to M$ that are transverse to $W$ is open and dense in the strong topology,
but only massive in the metric topology (see \cite{Hi}*{3.2.1}). 
The same applies to jet transversality (see \cite{Hi}*{3.2.8 and Exercise 3.8(b)}).

The assertion ``every generic smooth map $N\to M$ satisfies property $P$'' (or any logically equivalent 
assertion) will mean ``$C^\infty(N,M)$ with the strong topology contains an open dense subset whose 
elements satisfy property $P$''.
The assertion ``every weakly generic smooth map $N\to M$ satisfies property $P$'' (or any logically equivalent 
assertion) will mean ``$C^\infty(N,M)$ with the strong topology contains a massive subset whose elements satisfy 
property $P$''.%
\footnote{``Weakly generic'' corresponds to ``generic'' in the terminology of Gromov's book \cite{Gr}*{1.3.2(B)}.}
The choice of strong topology has only technical significance, because in the end we are only interested in 
the case of compact $N$.

\subsection{$2$-Multi-$0$-Jet Transversality Theorem} \label{2multi0jet-transversality}
Let $N$ be a closed smooth $n$-manifold, $M$ a smooth $m$-manifold, $m\ge n$, and let $f\:N\to M$ be 
a smooth map.
The graph $\Gamma_f\:N\to N\x M$, defined by $x\mapsto\big(x,f(x)\big)$, is an embedding.
Therefore the diagonal $\Delta_N=\{(x,x)\mid x\in N\}$ is the preimage of $\Delta_{N\x M}$ under 
$\Gamma_f\x\Gamma_f\:N^2\to (N\x M)^2$.
Let $\tilde\Gamma_f\:\tilde N\to\widetilde{N\x M}$ be the restriction of $\Gamma_f\x\Gamma_f$ to 
the deleted product $\tilde N=N\x N\but\Delta_N$.
Clearly, the image of $\tilde\Gamma_f$ lies in $\tilde N\x M\x M=\widetilde{N\x M}\but\Delta_N\x\tilde M$.

Let us recall the $2$-multi-$0$-jet Transversality Theorem \cite{GG}*{II.4.13}: if $L$ is a smooth submanifold of 
$\tilde N\x M\x M$ and $f\:N\to M$ is a weakly%
\footnote{If $L$ is closed and $L\subset K\x M$, where $K\subset\tilde N$ is compact, then ``weakly'' can be 
omitted (see \cite{GG}*{Proof of Lemma II.4.14}).
For every compact $N$ (in which case the metric and strong topologies coincide) it is easy to construct an $L$ 
(with ``non-asymptotic'' behavior near $\Delta_N\x M$) such that the set of all $f\:N\to M$ for which 
$\tilde\Gamma_f$ is transverse to $L$ is not open in $C^\infty(N,M)$.} 
generic smooth map, then $\tilde\Gamma_f$ is transverse to $L$.
An immediate consequence of this theorem is that $\Delta_f$ is a smooth $(2n-m)$-manifold; indeed, 
$\Delta_f=\tilde\Gamma_f^{-1}(L)$, where $L=\tilde N\x\Delta_M$.
Moreover, $f$ is {\it self-transverse}, that is, the restriction of $f\x f\:N^2\to M^2$ to $\tilde N$ 
is transverse to $\Delta_M$, or, equivalently, for any two distinct points $x,y\in N$ with $f(x)=f(y)$
the tangent space $T_{f(x)}M$ is generated by $df_x(T_xN)$ and $df_y(T_yN)$.

It is necessary to restrict $\Gamma_f\x\Gamma_f$ to $\tilde N$ in the $2$-multi-$0$-jet Transversality Theorem.
Indeed, $\Gamma_f\x\Gamma_f$ is not transverse to $\bar L:=N^2\x\Delta_M$ unless $f$ is an immersion, since 
$(\Gamma_f\x\Gamma_f)^{-1}(\bar L)=\Delta_f\cup\Delta_N$ is not a manifold unless $f$ is an immersion.
(Here $\Delta_f=\{(x,y)\in\tl N\mid f(x)=f(y)\}$.)
One way of explaining this failure is that $N^2$ is, in a sense, a wrong compactification of $\tilde N$. 

\subsection{Fulton--MacPherson and Axelrod--Singer compactifications of $\tilde N$}\label{FM-AS}
Let $N$ be a closed smooth manifold and let $\tau\:N^2\to N^2$ be the factor exchanging involution.
Let $R$ be a $\tau$-invariant tubular neighborhood of $\Delta_N$ in $N^2$, and let $\nu\:R\to\Delta_N$ be 
an equivariant normal bundle projection.
The closure $Q$ of $N^2\but R$ in $N^2$ is a manifold with boundary, and $\nu$ extends to an equivariant
smooth map $\phi\:N^2\to N^2$ that restricts to a homeomorphism between $Q\but\partial Q$ and $\tilde N$.
Hence $\tilde N$ is the interior of a $\Z/2$-manifold $(\check N,\check\tau)$ which is equivariantly 
homeomorphic to $Q$.
The quotient of $\check N$ by the restriction of $\check\tau$ to $\partial\check N$ is a closed manifold 
$\hat N$.

The manifolds $\check N$ and $\hat N$ are well-defined up to equivariant homeomorphism keeping $\tilde N$
fixed (indeed, $\hat N$ is nothing but the blowup of $N^2$ along $\Delta_N$) and are special cases of the
Axelrod--Singer and Fulton--MacPherson compactifications of configuration spaces (see \cite{FM}, \cite{Sin});
in our case of interest they were known long before (see \cite{Ro}).
Since $\nu$ is isomorphic to the tangent disk bundle of $N$, the Axelrod--Singer corona $\check N\but\tilde N$ 
is homeomorphic to the total space $SN$ of the spherical tangent bundle of $N$, and the Fulton--MacPherson 
corona $\hat N\but\tilde N$ is homeomorphic to the total space $PN$ of the projective tangent bundle of $N$.
In fact, using $\phi$, we obtain the commutative diagram
\[\begin{CD}
SN@>>>\check N\\
@VVV@VVV\\
PN@>>>\hat N\\
@VVV@VVV\\
N@>>>N^2.
\end{CD}\]
To avoid excessive notation, we will identify $\check N\but\tilde N$ with $SN$ and
$\hat N\but\tilde N$ with $PN$.

A continuous curve $\gamma\:[-1,1]\to N^2$ with $\gamma^{-1}(\Delta_N)=0$ lifts to a continuous curve 
$\hat\gamma\:[-1,1]\to\hat N$ if and only if $\gamma$ is differentiable at $0$; and 
$\hat\gamma(0)=\big(\gamma(0),\left<d\gamma_0(1)\right>\!\big)\in PN$.

If $N$ is a non-compact smooth manifold without boundary, all of the above applies, except that 
$\check N$ and $\hat N$ cannot be called ``compactifications'' (but they can be called {\it completions},
as long as we fix some complete Riemannian metric on $N$ or at least a uniform equivalence class
of such metrics).
If $N$ is a smooth manifold with boundary, one can define $\check N$ and $\hat N$ by considering the 
double of $N$, that is, $N\cup_{\partial N}N$.

\subsection{Complete self-transversality} Given a smooth map $f\:N\to M$ between smooth manifolds, let 
$\hat\Gamma_f\:\hat N\to\widehat{N\x M}$ be the extension of $\tilde\Gamma_f\:\tilde N\to\widetilde{N\x M}$ 
by means of the ``projective differential'' $PN\to P(N\x M)$, given by 
\[(x,\ell)\mapsto \Big(\big(x,f(x)\big),\big<v+df_x(v)\mid v\in\ell\big>\Big),\tag{$*$}\]
where $x\in N$ and $\ell$ is a $1$-subspace of $T_xN$.

The closure of our old friend $\tilde N\x\Delta_M$ in $\widehat{N\x M}$ can be written as $\hat N\x\Delta_M$
and intersects $P(N\x M)$ in $PN\x M$.
The preimage $\hat\Gamma_f^{-1}(PN\x M)$ coincides with the following subset of $PN$,
\[\hat\Sigma_f:=\big\{\left<v\right>\subset T_xN\mid x\in N,\,v\in\ker df_x\big\}.\]
Consequently $\hat\Gamma_f^{-1}\big(\hat N\x\Delta_M\big)$ coincides with the following subset of $\hat N$,
\[\hat\Delta_f:=\Delta_f\cup\hat\Sigma_f.\]

The map $f\:N\to M$ is called {\it completely self-transverse} if it is smooth and $\hat\Gamma_f$ is 
transverse to $\hat N\x\Delta_M$.
(The word ``completely'' may remind us of the completions.)
Thus if $f$ is completely self-transverse, $\hat\Delta_f$ is a submanifold of $\hat N$.

\begin{theorem} \label{xx-0} 
Let $N^n$ and $M^m$ be smooth manifolds, where $m\ge n$ and $N$ is compact.
Then the set of completely self-transverse maps is open and dense in $C^\infty(N,M)$.
\end{theorem}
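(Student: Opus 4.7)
The plan is to prove openness and density separately.

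For openness, I would use the compactness of $N$, hence of $\hat N$. The assignment $f\mapsto\hat\Gamma_f$ is continuous from $C^\infty(N,M)$ to $C^\infty(\hat N,\widehat{N\x M})$: on $\tilde N$ it is the restriction of $f\x f$, and across the corona $PN$ the formula $(*)$ depends smoothly on $f$ and $df$. Smoothness across the corona is visible in blowup coordinates $(x,v,r)$ via the Taylor identity $\big(f(x+rv)-f(x)\big)/r=df_x(v)+O(r)$, which shows that the two pieces of the definition patch smoothly. The subset $\hat N\x\Delta_M$ is a closed, properly embedded submanifold of $\widehat{N\x M}$, and transversality of smooth maps from a compact domain to a closed submanifold is a $C^1$-open condition. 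Combining these facts yields openness.

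For density, I would split the transversality of $\hat\Gamma_f$ to $\hat N\x\Delta_M$ into a part over $\tilde N$ and a part over the corona $PN$. Over $\tilde N$ the condition is the standard self-transversality of $f$, dense in $C^\infty(N,M)$ by the $2$-multi-$0$-jet transversality theorem cited in the excerpt. Over $PN$ at a point $(x_0,\ell_0)\in\hat\Sigma_f$ (fixing $v_0\in\ell_0\setminus\{0\}$), a direct blowup-coordinate computation reduces transversality to the linear-algebraic requirement
\[\mathrm{im}\,df_{x_0}\;+\;\mathrm{span}\{d^2f_{x_0}(v_0,\cdot)\}\;=\;T_{f(x_0)}M,\]
a condition on the pair $(j^2f(x_0),\ell_0)$. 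The failure locus in $PN\x J^2(N,M)$ is a finite union of submanifolds stratified by the corank of the linear part and by the rank of the Hessian modulo $\mathrm{im}\,df$ (a Thom--Boardman-style stratification). Applying Thom's jet-transversality theorem to the parametrized section $(x,\ell)\mapsto((x,\ell),j^2f(x))$ of $PN\x J^2(N,M)\to PN$ yields a $C^\infty$-dense set of $f$ for which this section meets each stratum transversely; a dimension check then shows that this is precisely the condition that $\hat\Gamma_f$ is transverse at every point of $\hat\Sigma_f$. Intersecting the two dense-open conditions yields density of completely self-transverse maps.

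The main obstacle will be the blowup-coordinate analysis at the corona: verifying smoothness of $\hat\Gamma_f$ across $PN$, extracting the clean $2$-jet condition displayed above, and packaging the bad locus as a (stratified) submanifold so that Thom's theorem applies with the right dimension count. Once this is accomplished, the rest of the argument is standard.
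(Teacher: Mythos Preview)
Your openness argument is the same as the paper's: continuity of $f\mapsto\hat\Gamma_f$ together with openness of transversality to a closed submanifold from a compact source.

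For density, your route diverges from the paper's main proof. The paper does not compute a $2$-jet condition at the corona. Instead it uses Lemma~\ref{xx-1} to split complete self-transversality into ordinary self-transversality plus \emph{full $1$-transversality} (transversality of the section $s_f\colon PN\to\Hom(\gamma,\P_N^*f^*\T_M)$ to the zero section), and then invokes Lemma~\ref{xx-2} (due to Porteous/Ronga): if $j^1f$ is transverse to each Thom--Boardman stratum $\Sigma^i\subset J^1(N,M)$, then $f$ is automatically fully $1$-transverse. The point of Lemma~\ref{xx-2} is that the fibre directions of $PN\to N$ already supply enough transversality once the $\Sigma^i$ are met transversely, so no second-order information about $f$ is needed and the standard $1$-jet transversality theorem on $N$ suffices. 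This is what your argument is missing: a mechanism that reduces the problem on $PN$ back to a jet-transversality statement on $N$.

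Your proposed reduction---``apply Thom's jet-transversality theorem to the parametrized section $(x,\ell)\mapsto((x,\ell),j^2f(x))$ over $PN$''---is not a black-box application of Thom's theorem, which concerns $j^kf\colon N\to J^k(N,M)$. The section over $PN$ is constant along the fibres of $\P_N$, so perturbing $f$ moves it only in restricted directions; one must check that these directions nonetheless sweep out the jet fibre. This is exactly the content of the paper's Theorem~\ref{projective differential}, whose proof reopens Thom's argument (constructing a local family $g_b$ with $b\in\Hom(\R^n,\R^m)$ and showing the induced map on $PN$ is submersive). So your approach can be completed, but the step you flag as routine is precisely where the work lies; the paper's Lemma~\ref{xx-2} is the device that lets one bypass it.
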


Theorem \ref{xx-0} is an easy consequence of a result by Ronga \cite{Ro}*{2.5(i)} (see also \cite{Ro2}).
As the present author was unaware of Ronga's work, Theorem \ref{xx-0} was also announced in 
\cite{M1}*{comments after Proposition 1}.
Its proof given below is extracted from an unfinished 2004 manuscript by the author and is somewhat different 
from Ronga's proof (Ronga's proof is more explicit, whereas ours is coordinate free), although both proofs 
quote the same cases of the Jet Transversality Theorem.

In \S\ref{extended} below we also prove a generalization of Theorem \ref{xx-0}, whose proof on the one hand 
reuses much of the proof of Theorem \ref{xx-0}, but on the other hand provides alternative arguments elsewhere.
This resulting third proof of Theorem \ref{xx-0} is logically shorter in that it builds directly on 
the proof of the Jet Transversality Theorem instead of being content with sorting out its consequences.

\subsection{Full $1$-transversality}
Let $\P_N\:PN\to N$ denote the projectivization of the tangent bundle $\T_N\:TN\to N$, and let $\gamma$ 
be the tautological line bundle over $PN$, which is associated with its double covering by 
the total space $SN$ of the spherical tangent bundle.
Then $\gamma$ can be viewed as a subbundle of $\P_N^*\T_N$, and the differential $df\:\T_N\to\T_M$ 
yields a map of bundles $df^{PN}\:\P_N^*\T_N\to\P_N^*f^*\T_M$ over $PN$.
The restriction of $df^{PN}$ to $\gamma$ can be regarded as a section of the Hom-bundle 
$\eta:\Hom(\gamma,\P_N^*f^*\T_M)\to PN$.
The zero set of this section $s_f\:PN\to E(\eta)$ clearly coincides with $\hat\Sigma_f$.
If $s_f$ is transverse to the zero section, the map $f$ is called {\it fully $1$-transverse},
following Porteous \cite{Por}.
Thus if $f$ is fully $1$-transverse, $\hat\Sigma_f$ is a submanifold of $PN$.

\begin{lemma}\label{xx-1} $f$ is completely self-transverse if and only if it is self-transverse and 
fully $1$-transverse.
\end{lemma}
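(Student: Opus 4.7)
The plan is to verify the lemma by checking transversality conditions pointwise on $\hat\Delta_f = \Delta_f \cup \hat\Sigma_f$. Points of $\Delta_f$ lie in the open set $\tilde N \subset \hat N$ on which $\hat\Gamma_f$ restricts to $\tilde\Gamma_f$ and $\hat N \x \Delta_M$ restricts to $\tilde N \x \Delta_M$, so transversality at such points is literally self-transversality of $f$. The substantive content is the equivalence, at each $(x_0, \ell_0) \in \hat\Sigma_f \subset PN$, between transversality of $\hat\Gamma_f$ to $\hat N \x \Delta_M$ and transversality of the section $s_f$ to the zero section of $\eta$.

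For the boundary analysis, I would fix local coordinates on $N$ around $x_0 = 0$ with $\ell_0 = \langle e_1\rangle$ and on $M$ around $f(x_0) = 0$. Writing $u = (x_1 + x_2)/2$, $v = x_2 - x_1$ on $N^2$ and substituting $v = r(1, a)$ with $(r, a) \in \R \x \R^{n-1}$ gives a chart $(u, r, a)$ on $\hat N$ in which $PN = \{r = 0\}$. The analogous construction on $\widehat{N \x M}$ yields coordinates $(u, s, r, b, c) \in \R^n \x \R^m \x \R \x \R^{n-1} \x \R^m$ in which $\hat N \x \Delta_M = \{c = 0\}$ and the $c$-factor is the conormal direction. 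The parity in $r$ of $f(u + rw/2) \pm f(u - rw/2)$, combined with the Taylor expansion $f(u + rw/2) - f(u - rw/2) = r\,df_u(w) + O(r^3)$ for $w = (1, a)$, gives
\[
\hat\Gamma_f(u, r, a) = \bigl(u,\ f(u) + O(r^2),\ r,\ a,\ df_u(1, a) + O(r^2)\bigr),
\]
so projecting the differential at $(0, 0, 0)$ to the $c$-coordinate yields $\partial_{u_i} \mapsto \partial_{u_i}\partial_{u_1} f(0)$ for $i = 1, \dots, n$, $\partial_r \mapsto 0$, and $\partial_{a_j} \mapsto df_0(e_j)$ for $j = 2, \dots, n$.

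Transversality of $\hat\Gamma_f$ to $\hat N \x \Delta_M$ at $(x_0, \ell_0)$ therefore reduces to the condition that these $2n - 1$ vectors span $T_{f(x_0)} M$. On the other hand, trivializing $\gamma$ by the local section $(u, a) \mapsto (1, a)$ and $\P_N^* f^* \T_M$ by the standard basis of $\R^m$, one reads off $s_f(u, a) = df_u(1, a)$, whose vertical differential at the zero $(0, 0)$ has exactly the same list of images. So transversality of $s_f$ to the zero section at $(x_0, \ell_0)$ is the same spanning condition, which closes the equivalence. The main obstacle I anticipate is the bookkeeping inside the blowup charts: correctly identifying the $c$-factor as the conormal to $\hat N \x \Delta_M$ and verifying that $\partial_r c$ vanishes at $r = 0$ from the parity of $t$ in $r$. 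Once that is in place, the coincidence of the two lists of vectors is a direct inspection.
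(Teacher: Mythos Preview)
Your proof is correct and takes a more computational route than the paper. The paper works coordinate-free: it identifies a tubular neighborhood of $PN$ in $\Gamma_f^*P(N\x M)$ with the total space of $\eta$, so that $\hat\Gamma_f|_{PN}$ becomes literally the section $s_f$, and then uses that $\hat N\x\Delta_M$ meets $P(N\x M)$ transversely (in $PN\x M$) to pass from transversality of $s_f$ to the zero section---a statement internal to $P(N\x M)$---to transversality of $\hat\Gamma_f$ to $\hat N\x\Delta_M$ at points over $PN$. You instead write down explicit blowup charts and compute the $c$-projection of $d\hat\Gamma_f$ directly, finding that both conditions reduce to the same spanning requirement on the $2n-1$ vectors $\{\partial_i\partial_1 f(0)\}_{i=1}^n\cup\{\partial_j f(0)\}_{j=2}^n$ in $T_{f(x_0)}M$. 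Your parity observation (that the $c$-component is even in $r$, hence $\partial_r c|_{r=0}=0$) plays the role of the paper's transverse-intersection step. The payoff of your approach is that every ingredient is explicit and immediately checkable; the payoff of the paper's is that the identification of $\hat\Gamma_f|_{PN}$ with $s_f$ is intrinsic and gets reused later in the appendix.
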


\begin{proof} Clearly, $f$ is self-transverse if and only if $\hat\Gamma_f$ is transverse 
to $\tilde N\x\Delta_M$.

On the other hand, let $p_1$, $p_2$ denote the projections onto the factors of $N\x M$.
Since $p_1\Gamma_f=\id_N$ and $p_2\Gamma_f=f$, we have $\Gamma_f^*p_1^*\P_N=\P_N$ and 
$\Gamma_f^*p_2^*\P_M=f^*\P_M$.
With these identifications, $U:=E(\Gamma_f^*\P_{N\x M})\but E(f^*\P_M)$ is an open tubular neighborhood of
$E(\P_N)$ in $E(\Gamma_f^*\P_{N\x M})$.
Since the normal bundle of $E(\T_N)$ in $E(\Gamma_f^*\T_{N\x M})=E(\T_N\oplus f^*\T_M)$ is isomorphic to 
$E(\T_N^*f^*\T_M)$, and the normal bundle of $\R P^n$ in $\R P^{n+m}$ at $\ell\in\R P^n$ is canonically 
identified with $\Hom(\ell,\R^m)$, the normal bundle $\nu\:U\to PN$ of $E(\P_N)$ in $E(\Gamma_f^*\P_{N\x M})$
can be identified with the Hom-bundle $\eta\:\Hom(\gamma,\P^*_Nf^*\T_M)\to PN$.

The bundle projection $\nu\:U\to PN$, which discards the $M$-component of the tangent line, has the zero 
section $PN$ as well as the section $\hat\Gamma_f|_{PN}\:PN\to i^*\big(P(N\x M)\but p_2^*PM\big)$, where 
$i\:\Gamma_f(N)\to N\x M$ is the inclusion.
Under the above identification of $\nu$ with $\eta$ this section $\hat\Gamma_f|_{PN}$, which is given 
by the formula ($*$), gets identified with $s_f$.
Thus $f$ is fully $1$-transverse if and only if
$\hat\Gamma_f|_{PN}\:PN\to i^*P(N\x M)$ is transverse to $i^*p_1^*PN$.
Since $\hat N\x\Delta_M$ meets $P(N\x M)$ transversely in $PN\x M$, which in turn meets $i^*P(N\x M)$ 
transversely in $i^*p_1^*PN$, the latter is equivalent to saying that $\hat\Gamma_f$ is transverse to
$\hat N\x\Delta_M$ at each point of $PN\x M$. 
\end{proof}

\subsection{1-Jet Transversality Theorem}\label{1-jets}
The space $J^1(N,M)$ of $1$-jets from $N$ to $M$ is the total space of the vector bundle
$\J^1(N,M)\:\Hom(p_1^*\T_N,p_2^*\T_M)\to N\x M$, where $p_1$, $p_2$ denote the projections onto the factors 
of $N\x M$.
The differential $df$ can be viewed as a section $s_{df}\:N\to J^1(N,M)$ of the bundle $J^1(N,M)\to N\x M\to N$.
The $1$-jet Transversality Theorem (see \cite{Hi}*{3.2.8}, \cite{GG}*{II.4.9}) says that if $L$ is 
a smooth submanifold of $J^1(N,M)$ and $f$ is a weakly%
\footnote{If $L$ is closed, then ``weakly'' can be dropped (see \cite{Hi}*{3.2.8}, 
\cite{GG}*{II.3.4 and II.4.5}).} 
generic smooth map $N\to M$, then $s_{df}$ is transverse to $L$.
An immediate consequence of this theorem is that for a weakly generic $f\:N\to M$, 
the set $\Sigma^i_f:=\{x\in N\mid\dim(\ker df_x)=i\}$ is 
a (not necessarily closed) smooth submanifold in $N$ of codimension $i(m-n+i)$, as long as $m\ge n$.
Indeed, $\Sigma^i_f=s_{df}^{-1}(\Sigma^i)$, where $\Sigma^i\subset J^1(N,M)$ is the (generally non-closed) 
submanifold of all linear maps $L\:T_xN\to T_yM$, $(x,y)\in N\x M$, of rank $r:=n-i$; and a fiber of 
the normal bundle of $\Sigma^i$ at some $L\in\Sigma^i\cap\Hom(T_xN,T_yM)$, can be identified 
(see \cite{GG}*{proof of II.5.3}) with the coset $C_L:=L+\Hom(\ker L,\coker L)\subset\Hom(T_xN,T_yM)$, 
which has dimension $(n-r)(m-r)$.
Following Porteous \cite{Por}, we will say that $f$ is {\it $i$-transverse} if $s_{df}$ is transverse to 
$\Sigma^i$.
Note that this condition is only non-vacuous for finitely many values of $i$, namely when $i(m-n+i)\le n$.

\begin{lemma}\label{xx-2} \cite{Ro}*{2.2} If $f$ is $i$-transverse for all $i$, then it is fully $1$-transverse.
\end{lemma}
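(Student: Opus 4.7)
The plan is to reduce, at each $(x_0,\ell_0)\in\hat\Sigma_f$, transversality of $s_f$ to the zero section of $\eta$ to $i$-transversality of $s_{df}$ at $x_0$, where $i=\dim\ker df_{x_0}$ (so $\ell_0\subset\ker df_{x_0}$ and $x_0\in\Sigma^i_f$). At a zero-section point the tangent space of $E(\eta)$ splits canonically, so one has to show only that the vertical component of $(ds_f)_{(x_0,\ell_0)}$, regarded as a linear map $T_{(x_0,\ell_0)}PN\to\Hom(\ell_0,T_{f(x_0)}M)$, is surjective. I will split $T_{(x_0,\ell_0)}PN$ via the short exact sequence
\[
0\to T_{\ell_0}P(T_{x_0}N)\to T_{(x_0,\ell_0)}PN\to T_{x_0}N\to 0,
\]
using $T_{\ell_0}P(T_{x_0}N)\cong\Hom(\ell_0,T_{x_0}N/\ell_0)$, and analyze the fiber and horizontal contributions separately.

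For the fiber contribution: fixing $x=x_0$ and varying $\ell$, one has $s_f(x_0,\ell)=df_{x_0}|_\ell$, and differentiating along $\phi\in\Hom(\ell_0,T_{x_0}N/\ell_0)$, evaluated on $w\in\ell_0$, yields $df_{x_0}\bigl(\tilde\phi(w)\bigr)$ for any lift $\tilde\phi(w)\in T_{x_0}N$ of $\phi(w)$. This is well-defined because $\ell_0\subset\ker df_{x_0}$ and takes values in $\Hom(\ell_0,\operatorname{im}df_{x_0})$. Moreover, since $df_{x_0}$ descends to an isomorphism $T_{x_0}N/\ker df_{x_0}\xrightarrow{\cong}\operatorname{im}df_{x_0}$, the resulting map $\phi\mapsto df_{x_0}\circ\tilde\phi$ already surjects onto the subspace $\Hom(\ell_0,\operatorname{im}df_{x_0})$ of the fiber.

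For the horizontal contribution modulo $\Hom(\ell_0,\operatorname{im}df_{x_0})$: the key observation is the factorization $s_f=\mathrm{res}\circ\P_N^*s_{df}$, where $\P_N^*s_{df}$ is regarded as a section of $\P_N^*\Hom(\T_N,f^*\T_M)$ over $PN$ and $\mathrm{res}$ is the bundle map induced by the inclusion $\gamma\hookrightarrow\P_N^*\T_N$. Picking any horizontal lift of $v\in T_{x_0}N$ and reducing modulo $\Hom(\ell_0,\operatorname{im}df_{x_0})$, the horizontal part of the vertical component of $(ds_f)_{(x_0,\ell_0)}$ coincides with the composition
\[
T_{x_0}N\xrightarrow{(ds_{df})_{x_0}}T_{df_{x_0}}J^1(N,M)\twoheadrightarrow\Hom(\ker df_{x_0},\coker df_{x_0})\xrightarrow{\mathrm{res}_{\ell_0}}\Hom(\ell_0,\coker df_{x_0}),
\]
the middle arrow being the projection onto the normal fiber of $\Sigma^i$ at $df_{x_0}$ identified as in \S\ref{1-jets}.

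To conclude, $i$-transversality at $x_0$ is precisely the surjectivity of the composition of the first two arrows, and $\mathrm{res}_{\ell_0}$ is surjective because $\ell_0\ne 0$; hence the horizontal contribution surjects modulo $\Hom(\ell_0,\operatorname{im}df_{x_0})$ onto $\Hom(\ell_0,\coker df_{x_0})$, and together with the fiber contribution this fills all of $\Hom(\ell_0,T_{f(x_0)}M)$. The step requiring most care is the derivative computation for $s_f$: it implicitly uses a connection on the projection $PN\to N$, but the connection-dependence is harmless precisely because the fiber contribution is already surjective onto its target $\Hom(\ell_0,\operatorname{im}df_{x_0})$, so only the class of the horizontal part modulo $\Hom(\ell_0,\operatorname{im}df_{x_0})$ needs to be controlled.
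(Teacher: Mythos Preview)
Your proof is correct and follows essentially the same strategy as the paper's: both use the fiber directions in $PN$ to account for $\Hom(\ell_0,\operatorname{im}df_{x_0})$ (equivalently, the codimension~$r$ of $\Pi^i$ in $\hat\Sigma^i$) and invoke $i$-transversality of $s_{df}$ for the remaining $\Hom(\ell_0,\coker df_{x_0})$ part. The only difference is packaging: the paper passes to the auxiliary section $\hat s_f$ and the stratification $\Pi=\bigcup_i\Pi^i$, whereas you work directly with the vertical derivative of $s_f$ and the image/cokernel splitting of the fiber $\Hom(\ell_0,T_{f(x_0)}M)$, which is slightly more direct.
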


Lemma \ref{xx-2} and the 1-jet Transversality Theorem imply that weakly generic maps are fully $1$-transverse.
One application of Lemma \ref{xx-2} is discussed in \cite{Mc}.

\begin{proof} The section $s_f\:PN\to\Hom(\gamma,\P_N^*f^*\T_M)$ is transverse to the zero section 
if and only if the section $\hat s_f\:PN\to\Hom(\P_N^*\T_N,\P_N^*f^*\T_M)$, given by $df^{PN}$, 
is transverse to the subbundle $\Pi:=\Hom(\P_N^*\T_N/\gamma,\P_N^*f^*\T_M)$.
On the other hand, $s_{df}\:N\to J^1(N,M)$ factors through a section of the bundle
$\Gamma_f^*\J^1(N,M)\:\Hom(\T_N,f^*\T_M)\to N$.
Now $\hat s_f$ is the induced section of the induced bundle $\P_N^*\Hom(\T_N,f^*\T_M)\to PN$, and 
it follows that $s_{df}$ is transverse to $\Sigma^i$ if and only if $\hat s_f$ is transverse to
$\hat\Sigma^i:=\P_N^*\Gamma_f^*(\Sigma^i)$.

We have $\Pi\subset\bigcup_i\hat\Sigma^i$, and each $\Pi^i:=\Pi\cap\hat\Sigma^i$ is a (not necessarily closed) 
submanifold of $\P_N^*\Hom(\T_N,f^*\T_M)$.
By the definition of transversality, to prove that $\hat s_f$ is transverse to $\Pi$ it suffices to show 
that $\hat s_f$ is transverse to each $\Pi^i$.
By the above, $\hat s_f$ is transverse to each $\hat\Sigma^i$ since $f$ is assumed to be $i$-transverse.
So it remains to show that $\hat s_f$ restricted to each 
$\hat\Sigma^i_f:=\hat s_f^{-1}(\hat\Sigma^i)=\P_N^{-1}(\Sigma^i_f)$ is transverse to $\Pi^i$ (as a map into 
$\hat\Sigma^i$).

Let $\ell$ be a $1$-subspace of $T_xN$ for some $x\in N$, and let $L=\hat s_f(\ell)$.
Assuming that $L\in\Pi^i$, we have $\dim(\ker df_x)=i$ and $\ell\subset\ker df_x$.
Let $K_\ell$ denote the tangent space at $\ell$ to the fiber of $\P_N$ over $x$, that is, $K_\ell$ is 
the kernel of $d(\P_N)_\ell\:T_\ell(PN)\to T_xN$.
We claim that $d(\hat s_f)_\ell(K_\ell)$ and $T_L\Pi^i$ span $T_L\hat\Sigma^i$.
Indeed, direct computation shows that $\Pi^i$ has codimension $r$ in $\hat\Sigma^i$.%
\footnote{Namely, the fiber of $\hat\Sigma^i$ over $(x,\ell)\in PN$ consists of $n\x m$ matrices of rank $r$, 
and so has dimension $mn-(m-r)(n-r)$.
The fiber of $\Pi^i$ over $(x,\ell)$ consists of $(n-1)\x m$ matrices 
of rank $r$, and so has dimension $m(n-1)-(m-r)(n-1-r)=mn-(m-r)(n-r)-r$.}
On the other hand, $K_\ell$, which is a tangent space of $\R P^{n-1}$, can be identified with 
$\Hom(\ell,T_xN/\ell)$.
If $v_1,\dots,v_r\in K_\ell$ are representatives of a basis for $K_\ell/\Hom(\ell,\ker df_x/\ell)$, 
where $r=n-i$ is the rank of $df_x$, we claim that the $r$ vectors $d(\hat s_f)_\ell(v_i)+T_L\Pi^i$
are linearly independent in $T_L\hat\Sigma^i/T_L\Pi^i$.
Indeed, every nontrivial linear combination $\sum_j\alpha_jd(\hat s_f)_\ell(v_j)$ can be written as
$d(\hat s_f)_\ell(v)$, where $v=\sum_j\alpha_jv_j\ne 0$.
This $v$ is the tangent vector to a great circle arc from $\ell$ to a nearby point $\ell'\in\R P^{n-1}$ such that 
$\ell'\not\subset\ker df_x$.
Then $\hat s_f(\ell')\notin\Pi^i$, and it follows that $d(\hat s_f)_\ell(v)\notin T_L\Pi_i$.
\end{proof}

\begin{proof}[Proof of Theorem \ref{xx-0}]
Let $S$ be the set of all maps $N\to M$ that are self-transverse and $i$-transverse for all $i$
such that $i(m-n+i)\le n$.
By the 1-Jet Transversality and the 2-Multi-0-Jet Transversality theorems (see \cite{GG}) $S$ is massive,
and in particular dense in $C^\infty(N,M)$.
By Lemmas \ref{xx-1} and \ref{xx-2} $S$ lies in the set $T$ of all completely self-transverse maps $N\to M$.
Hence $T$ is also dense in $C^\infty(N,M)$.

On the other hand, the set $U$ of all maps $\hat N\to\widehat{N\x M}$ that are transverse to 
$\hat N\x\Delta_M$ is open in $C^\infty(\hat N,\widehat{N\x M})$ (see e.g.\ \cite{GG}*{Proposition II.4.5}).
By definition, $T=\phi^{-1}(U)$, where $\phi\:C^\infty(N,M)\to C^\infty(\hat N,\widehat{N\x M})$
is given by $f\mapsto\hat\Gamma_f$.
It is easy to see that $\phi$ is continuous. Hence $T$ is open. 
\end{proof}

\subsection{Behavior of $\hat\Delta_f$ at $\hat\Sigma_f$}

If $f$ is a completely self-transverse map, then $\hat\Delta_f$ is a submanifold of $\hat N$; and also its
intersection $\hat\Sigma_f$ with $PN$ is a submanifold of $PN$ by Lemma \ref{xx-1}.
The following lemma shows that this is not merely a coincidence.

\begin{lemma} \label{blowup} $\hat\Gamma_f$ is transverse to $P(N\x M)$ for every smooth map $f\:N\to M$.
\end{lemma}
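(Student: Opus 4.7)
The first observation is that $P(N\x M)$ is a codimension-one submanifold of $\widehat{N\x M}$ (the exceptional divisor of the blowup of $(N\x M)^2$ along $\Delta_{N\x M}$), and its preimage under $\hat\Gamma_f$ is exactly $PN\subset\hat N$, which is again of codimension one. Transversality of $\hat\Gamma_f$ to $P(N\x M)$ at a point $(x,\ell)\in PN$ therefore reduces to exhibiting a single tangent vector at $(x,\ell)$ whose image under $d(\hat\Gamma_f)$ escapes $T_{\hat\Gamma_f(x,\ell)}P(N\x M)$.

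To produce such a vector, the plan is to lift an explicit transverse curve. Pick any nonzero $v\in\ell$ and a smooth $\delta\:(-\eps,\eps)\to N$ with $\delta(0)=x$ and $\delta'(0)=v$, and form $\gamma\:t\mapsto\big(\delta(-t),\delta(t)\big)$. Then $\gamma^{-1}(\Delta_N)=\{0\}$, $\gamma$ is differentiable at $0$, and $d\gamma_0(1)=(-v,v)$ is transverse to $T_{(x,x)}\Delta_N$. By the lifting criterion recalled in the excerpt, $\gamma$ lifts to $\hat\gamma\:(-\eps,\eps)\to\hat N$ with $\hat\gamma(0)=(x,\ell)$, and the transversality of $d\gamma_0$ to $T\Delta_N$ upgrades to transversality of $\hat\gamma'(0)$ to $T_{(x,\ell)}PN$. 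Next, I would compute for $t\ne 0$
\[\hat\Gamma_f\big(\hat\gamma(t)\big)=(\Gamma_f\x\Gamma_f)\big(\gamma(t)\big)=\Big(\big(\delta(-t),f\delta(-t)\big),\big(\delta(t),f\delta(t)\big)\Big),\]
and observe that this curve in $\widetilde{N\x M}$ extends continuously to the diagonal point $\big((x,f(x)),(x,f(x))\big)$ at $t=0$, is differentiable there, and has second-minus-first component equal to $2\big(v,df_x(v)\big)$. The decisive point is that this vector is nonzero simply because $v\ne 0$, and this holds regardless of whether $\ell\subset\ker df_x$. So by the lifting criterion applied in the target, the composition lifts to a curve in $\widehat{N\x M}$ whose velocity at $0$ is transverse to $P(N\x M)$ at the point $\big((x,f(x)),\langle v+df_x(v)\rangle\big)$, which equals $\hat\Gamma_f(x,\ell)$ by the defining formula $(*)$. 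Consequently $d(\hat\Gamma_f)_{(x,\ell)}\big(\hat\gamma'(0)\big)\notin T P(N\x M)$, giving the required escaping tangent vector.

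The only point that needs careful bookkeeping is matching the canonical identifications: the normal bundle of $\Delta_N$ in $N^2$ is identified with $TN$ by ``second minus first'', so $(-v,v)$ corresponds to $2v$ and the exceptional point $\hat\gamma(0)$ is indeed $(x,\langle v\rangle)=(x,\ell)$; the analogous identification in $(N\x M)^2$ turns $2\big(v,df_x(v)\big)$ into the line $\langle v+df_x(v)\rangle$, consistent with $(*)$. I do not anticipate a serious obstacle here --- the whole argument is a single-curve verification --- and it is precisely the fact that transversality at the exceptional level depends only on $v\ne 0$, not on the behavior of $df_x$ on $\ell$, that accounts for the lemma holding for \emph{every} smooth map $f$.
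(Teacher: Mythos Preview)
Your proof is correct and follows essentially the same approach as the paper: both produce a curve $t\mapsto(\delta(-t),\delta(t))$ in $N^2$ (the paper swaps the roles of the names $\gamma$ and $\delta$), lift it to $\hat N$, and argue that its image under $\hat\Gamma_f$ is transverse to $P(N\times M)$ because the underlying curve in $(N\times M)^2$ is not tangent to $\Delta_{N\times M}$. The only cosmetic difference is that the paper phrases this last step via the projection $p\:(N\times M)^2\to N^2$ (noting $p\circ(\Gamma_f\times\Gamma_f)\circ\gamma=\gamma$ is already not tangent to $\Delta_N$), whereas you compute the tangent vector $2\big(v,df_x(v)\big)$ explicitly; these are the same observation.
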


\begin{proof} We have $\hat\Gamma_f^{-1}\big(P(N\x M)\big)=PN$.
For each $(x,\ell)\in PN$ we can find a smooth curve $\gamma\:\R\to N$ such that $\gamma(0)=x$ and 
$\left<\gamma'(0)\right>=\ell$.
Then $\delta\:\R\to N^2$ given by $\delta(t)=\big(\gamma(t),\gamma(-t)\big)$ is a smooth curve which
lifts to a smooth curve $\hat\delta\:\R\to\hat N$ with $\hat\delta(0)=(x,\ell)$.
Since $\delta$ is not tangent to $\Delta_N$ at $\delta(0)$ and $\delta=p\Gamma_f^2\delta$, where 
$p\:(N\x M)^2\to N^2$ is the projection, $\Gamma_f^2\delta$ is not tangent to $\Delta_{N\x M}$ at 
$\Gamma_f^2\delta(0)$.
Consequently its lift $\widehat{\Gamma_f\delta}=\hat\Gamma_f\hat\delta\:\R\to\widehat{N\x M}$ 
is not tangent to $P(N\x M)$ at $\hat\Gamma_f\hat\delta(0)$.
This suffices, since $P(N\x M)$ has codimension one in $\widehat{N\x M}$.
\end{proof}

\begin{corollary}\label{identification} If $f\:N\to M$ is a completely self-transverse map, then 

(a) $\hat\Delta_f$ intersects $PN$ transversely;

(b) $\hat\Delta_f$ coincides with the closure of $\Delta_f$ in $\hat N$;

(c) the image of $\hat\Delta_f$ in $N^2$ coincides with $\bar\Delta_f$;

(d) the image of $\hat\Sigma_f$ in $N$ coincides with $\Sigma_f$.
\end{corollary}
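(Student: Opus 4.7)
The plan is to establish (a) first, since (b)--(d) follow from it by relatively formal arguments. For (a), I will combine two transversality statements for $\hat\Gamma_f\:\hat N\to\widehat{N\x M}$: complete self-transversality gives transversality to $\hat N\x\Delta_M$, while Lemma \ref{blowup} gives transversality to the exceptional divisor $P(N\x M)$. It is a standard tangent-space calculation that if $\phi\:X\to Y$ is smooth and transverse to smooth submanifolds $A,B\subset Y$ that are themselves mutually transverse, then $\phi^{-1}(A)$ and $\phi^{-1}(B)$ are transverse in $X$; so I reduce (a) to showing that $A=\hat N\x\Delta_M$ and $B=P(N\x M)$ meet transversely in $\widehat{N\x M}$. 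Here $A$ is the proper transform of $N^2\x\Delta_M$ under the blowup of $(N\x M)^2$ along $\Delta_{N\x M}=\Delta_N\x\Delta_M$, and since the blowup center lies inside $N^2\x\Delta_M$, this proper transform is the real blowup of $N^2\x\Delta_M$ along $\Delta_N\x\Delta_M$, meeting $B$ in $PN\x\Delta_M$. Transversality is then verifiable in a standard affine chart on the blowup, where $A$ is cut out by the vanishing of the $M$-coordinates of the affine direction and $B$ is cut out by the vanishing of the radial scale parameter; these equations involve independent coordinates.

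For (b), $\hat\Delta_f=\hat\Gamma_f^{-1}(\hat N\x\Delta_M)$ is closed in $\hat N$ by continuity, so $\overline{\Delta_f}^{\hat N}\subseteq\hat\Delta_f$; it remains to show $\hat\Sigma_f\subseteq\overline{\Delta_f}^{\hat N}$. By (a), $\hat\Delta_f$ is a smooth $(2n-m)$-submanifold transverse to $PN$, and its intersection $\hat\Sigma_f$ with $PN$ has codimension one in $\hat\Delta_f$. Moreover transversality at each $p\in\hat\Sigma_f$ forbids $T_p\hat\Delta_f\subseteq T_pPN$, so no component of $\hat\Delta_f$ can be contained in $PN$. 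Hence locally every $p\in\hat\Sigma_f$ is accumulated by points of $\hat\Delta_f\setminus\hat\Sigma_f=\Delta_f$, giving $\hat\Sigma_f\subseteq\overline{\Delta_f}^{\hat N}$.

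For (d), the image of $\hat\Sigma_f$ in $N$ is by definition $\{x\in N\mid\ker df_x\ne 0\}$, and I verify this equals $\Sigma_f$. Given $x\in\Sigma_f$, choose $(x_n,y_n)\in\Delta_f$ converging to $(x,x)$ in $N^2$ and extract a subsequence whose direction in the blowup converges to some $\ell\in PT_xN$, so that $(x_n,y_n)\to(x,\ell)$ in $\hat N$; by (b), $(x,\ell)\in\hat\Delta_f\cap PN=\hat\Sigma_f$, and $x$ lies in the image. Conversely any $(x,\ell)\in\hat\Sigma_f$ is, by (b), the $\hat N$-limit of points of $\Delta_f$, whose images in $N^2$ are double points converging to $(x,x)$, witnessing $x\in\Sigma_f$. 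For (c), the projection $\pi\:\hat N\to N^2$ is proper (its fibers are points or copies of $\R P^{n-1}$), hence closed, so $\pi\bigl(\overline{\Delta_f}^{\hat N}\bigr)=\overline{\pi(\Delta_f)}^{N^2}=\bar\Delta_f$, and combining with (b) gives $\pi(\hat\Delta_f)=\bar\Delta_f$.

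The main obstacle is the last step of (a), namely the transversality of $\hat N\x\Delta_M$ and $P(N\x M)$ inside $\widehat{N\x M}$. Because $N^2\x\Delta_M$ contains the blowup center $\Delta_{N\x M}$ rather than meeting it transversely, the usual transversality machinery does not apply off the shelf, and one must either perform the local coordinate calculation on the blowup sketched above or invoke the general principle that proper transforms of smooth submanifolds containing the blowup center meet the exceptional divisor transversely. Everything else is either a direct application of (a) and Lemma \ref{blowup} or a standard point-set or manifold argument.
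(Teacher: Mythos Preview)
Your proposal is correct and follows essentially the same route as the paper, which records the argument in a single line: ``Obviously, Lemma \ref{blowup} $\Rightarrow$ (a) $\Rightarrow$ (b) $\Rightarrow$ (c) $\Rightarrow$ (d).'' Your proof simply unpacks this chain; the only difference is that you derive (d) from (b) directly rather than via (c), which is harmless. The fact you single out as the ``main obstacle''---that $\hat N\x\Delta_M$ and $P(N\x M)$ meet transversely in $\widehat{N\x M}$---is already asserted (without proof) in the paper's proof of Lemma \ref{xx-1}, so you may cite it there instead of redoing the local blowup calculation.
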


Here $\bar\Delta_f$ is the closure of $\Delta_f$ in $N\x N$.

\begin{proof} Obviously, Lemma \ref{blowup} $\Rightarrow$ (a) $\Rightarrow$ (b) $\Rightarrow$ (c) 
$\Rightarrow$ (d).
\end{proof}

\begin{corollary} \label{smooth-delta} If $f\:N\to M$ is a corank one completely self-transverse map, 
then $\bar\Delta_f$ is a smooth submanifold of $N\x N$ and $\Sigma_f$ is a smooth submanifold of $N$.
\end{corollary}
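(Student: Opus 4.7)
The plan is to realize $\Sigma_f$ and $\bar\Delta_f$ as smooth embedded images of the smooth submanifolds $\hat\Sigma_f\subset PN$ and $\hat\Delta_f\subset\hat N$ under the projection $\P_N\:PN\to N$ and the blowdown $b\:\hat N\to N\x N$ respectively, and to argue that the corank one hypothesis is precisely what promotes the set-theoretic bijections on these pieces (guaranteed by Corollary \ref{identification}(c,d)) into smooth embeddings.

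I would first handle $\Sigma_f$. By Lemma \ref{xx-1} the hypothesis implies that $f$ is fully $1$-transverse, so $\hat\Sigma_f$ is a smooth submanifold of $PN$. Since $f$ has corank at most one, $\hat\Sigma_f$ reduces to $\{(x,\ker df_x) : x\in\Sigma_f,\ \dim\ker df_x = 1\}$, and the projection $\P_N|_{\hat\Sigma_f}\:\hat\Sigma_f\to N$ is injective with image $\Sigma_f$ (using Corollary \ref{identification}(d)). I would then check its differential is injective: a nonzero tangent vector to $\hat\Sigma_f$ lying in the fiber direction of $\P_N$ would amount to varying $\ell$ while keeping $\ell=\ker df_x$ constant, which is impossible. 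Since $\P_N$ is proper (fibers $\R P^{n-1}$), $\P_N|_{\hat\Sigma_f}$ is a proper injective immersion and hence a smooth embedding, making $\Sigma_f$ a smooth submanifold of $N$.

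For $\bar\Delta_f$ I would run the analogous argument for $b|_{\hat\Delta_f}$. Injectivity is automatic off $PN$ (where $b$ is a diffeomorphism and restricts to the identity on $\Delta_f$); on $\hat\Sigma_f=\hat\Delta_f\cap PN$ it reduces to the previous paragraph. The immersion property off $PN$ is also automatic. At a point $p=(x,\ell)\in\hat\Sigma_f$, the kernel of $db_p$ equals the tangent space $T_pF$ to the projective fiber $F$ of $PN\to\Delta_N$ through $p$. Corollary \ref{identification}(a) gives $T_p\hat\Delta_f\cap T_pPN=T_p\hat\Sigma_f$, and since $T_pF\subset T_pPN$ we get $T_p\hat\Delta_f\cap\ker db_p\subset T_p\hat\Sigma_f\cap T_pF$; the right-hand side vanishes by the corank one argument already used. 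Properness of $b|_{\hat\Delta_f}$ follows from properness of $b$ together with $\hat\Delta_f=\hat\Gamma_f^{-1}(\hat N\x\Delta_M)$ being closed in $\hat N$. A proper injective immersion is an embedding, so $\bar\Delta_f$ is a smooth submanifold of $N\x N$.

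The main technical obstacle is pinning down the identification $\ker db_p=T_pF$ at exceptional points and cleanly routing the immersion check through Corollary \ref{identification}(a) down to the corank one rigidity of $\ker df_x$. It is precisely the corank one hypothesis that collapses the projective fiber of $\ker df$ at each point of $\Sigma_f$ to a single point, which is what prevents the blowdown from producing the sort of singularity that would otherwise appear when collapsing positive-dimensional fibers of the exceptional divisor.
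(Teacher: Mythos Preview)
Your proposal is correct and follows essentially the same route as the paper's proof: both argue that $\P_N|_{\hat\Sigma_f}$ and the blowdown restricted to $\hat\Delta_f$ are injective immersions (hence embeddings) by combining the corank one hypothesis with Corollary~\ref{identification}, and then identify the images via parts (c) and (d). Your treatment is in fact more explicit than the paper's at the key step---routing the immersion check at exceptional points through $T_p\hat\Delta_f\cap\ker db_p\subset T_p\hat\Sigma_f\cap T_pF$ via Corollary~\ref{identification}(a)---though your one-line justification that $T_p\hat\Sigma_f\cap T_pF=0$ (``varying $\ell$ while keeping $\ell=\ker df_x$ constant'') would benefit from being unpacked as the observation that the vertical derivative of $s_f$ along the fibre is $\phi\mapsto df_x\circ\phi$ on $\Hom(\ell,T_xN/\ell)$, which is injective precisely because $\ker df_x=\ell$.
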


Let us note that although $\bar\Delta_f$ intersects $\Delta_N$ in the submanifold $\Delta_{\Sigma_f}$, 
this intersection is never transverse for $n>1$, since $\hat\Sigma_f$ has codimension one in $\hat\Delta_f$.

It is a well-known folklore result that $\bar\Delta_f$ is a smooth submanifold of $N\x N$ and $\Sigma_f$ 
is a smooth submanifold of $N$ for every stable%
\footnote{Let us note that stable maps are completely self-transverse, since the latter are dense in $C^\infty$
by Theorem \ref{xx-0} and self-transversality is preserved by $C^\infty$-left-right equivalence.}
corank one map $f$.
It can be derived from Morin's canonical forms \cite{Mo} for stable corank one germs 
(cf.\ \cite{Hou}*{\S9}); see also \cite{MM}*{2.14(i) and 2.16} for a complex algebraic version with $m>n$.

\begin{proof} Since $f$ is completely self-transverse, $\hat\Delta_f$ is a smooth submanifold of $\hat N$.
Also, Lemma \ref{xx-1} implies that $\hat\Sigma_f$ is a smooth submanifold of $PN$ and 
$\P_N\:PN\to N$ restricts to a submersion on $\hat\Sigma_f$.
On the other hand, since $f$ is a corank one map, $\P_N\:PN\to N$ is injective on $\hat\Sigma_f$,
and consequently the blowdown map $\pi\:\hat N\to N^2$ is injective on $\hat\Delta_f$.
It follows that $\P_N$ restricts to a smooth embedding on $\hat\Sigma_f$, and, using Corollary 
\ref{identification}(a), that $\pi$ restricts to a smooth embedding on $\hat\Delta_f$.
Finally, by Corollary \ref{identification}(c,d) the images of these two embeddings coincide with 
$\Sigma_f$ and $\bar\Delta_f$.
\end{proof}

\begin{lemma} \label{smoothcurve}
If $f\:N\to M$ is a completely self-transverse map, then for each pair $(x,\ell)\in\hat\Sigma_f$ 
there exists a smooth curve $\gamma\:\R\to N$ such that $\gamma(0)=x$, $\left<\gamma'(0)\right>=\ell$
and $f\big(\gamma(t)\big)=f\big(\gamma(-t)\big)$ for each $t\in\R$.
\end{lemma}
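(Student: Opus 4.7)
The plan is to build a smooth $\tau$-equivariant curve $\sigma$ through $(x,\ell)$ in $\hat\Delta_f$ that crosses the fixed locus $\hat\Sigma_f$ transversely, then push it down through the blowdown $\pi\:\hat N\to N\x N$ and read off its first coordinate as $\gamma$. Since $\hat\Delta_f$ is the closure of the $\tau$-invariant set $\Delta_f$ (Corollary \ref{identification}(b)), it is $\tau$-invariant; and since the fixed set of $\tau$ on $\hat N$ is the exceptional divisor $PN$ (on each projective fibre $\tau$ induces $\ell\mapsto-\ell=\ell$), the fixed set of $\tau$ on $\hat\Delta_f$ is exactly $\hat\Delta_f\cap PN=\hat\Sigma_f$. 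Self-transversality gives $\dim\hat\Delta_f=2n-m$, while full $1$-transversality (Lemma \ref{xx-1}) and the rank-$m$ nature of the bundle $\eta$ give $\dim\hat\Sigma_f=2n-1-m$. Consequently the $(-1)$-eigenspace $V^-$ of $d\tau_{(x,\ell)}$ acting on $T_{(x,\ell)}\hat\Delta_f$ is one-dimensional.

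Next I would average an arbitrary Riemannian metric on $\hat\Delta_f$ to a $\tau$-invariant one, pick a nonzero $v\in V^-$, and set $\sigma(t)=\exp_{(x,\ell)}(tv)$ for $t$ in a small interval $(-\epsilon,\epsilon)$. Because $\tau$ is an isometry and $d\tau(v)=-v$, one has $\tau\sigma(t)=\sigma(-t)$; and because $\sigma'(0)=v\notin T_{(x,\ell)}\hat\Sigma_f$, the curve $\sigma$ crosses $\hat\Sigma_f$ transversely in $\hat\Delta_f$, hence by the transversality of $\hat\Delta_f$ to $PN$ (Corollary \ref{identification}(a)) also crosses $PN$ transversely in $\hat N$. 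Letting $\delta=\pi\circ\sigma\:(-\epsilon,\epsilon)\to N\x N$, this $\delta$ is smooth, lands in $\bar\Delta_f$, meets $\Delta_N$ only at $t=0$, and satisfies $\delta(-t)=\tau\delta(t)$. Writing $\delta=(\alpha,\beta)$, equivariance forces $\beta(t)=\alpha(-t)$, and membership in $\bar\Delta_f$ forces $f\alpha(t)=f\beta(t)=f\alpha(-t)$; so $\gamma(t):=\alpha(t)$ satisfies $\gamma(0)=x$ and $f\gamma(t)=f\gamma(-t)$. A precomposition with any smooth odd diffeomorphism $\R\to(-\epsilon,\epsilon)$ will then extend $\gamma$ to all of $\R$ without disturbing the symmetry or the tangent direction at the origin.

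The most delicate point, and the one I expect to require the most careful bookkeeping, will be verifying that $\gamma'(0)\ne 0$ with $\langle\gamma'(0)\rangle=\ell$. The plan is to invoke the blowup lifting identity recalled in \S\ref{sec-xx}: $\sigma$ is a smooth lift of $\delta$ with $\sigma(0)=(x,\ell)$ sitting on the exceptional divisor, so $(x,\ell)=\bigl(x,\langle d\delta_0(1)\rangle\bigr)$. The transverse crossing of $PN$ by $\sigma$ guarantees that $d\delta_0(1)=(\gamma'(0),-\gamma'(0))$ has nonzero component normal to $\Delta_N$; under the canonical identification of the normal bundle of $\Delta_N$ in $N\x N$ with $TN$ via $(v_1,v_2)\mapsto v_2-v_1$, that component equals $-2\gamma'(0)$, which must therefore span the prescribed line $\ell$.
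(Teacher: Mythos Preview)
Your proposal is correct and follows essentially the same approach as the paper: construct a $\tau$-equivariant curve in $\hat\Delta_f$ through $(x,\ell)$ transverse to the fixed locus $\hat\Sigma_f$, push it down via the blowdown, and read off the first coordinate. The only cosmetic difference is that the paper obtains the equivariant curve by invoking the local linearization of a smooth involution at a fixed point (so that in local coordinates the involution is $(x_1,\dots,x_k)\mapsto(-x_1,x_2,\dots,x_k)$ and one may take $t\mapsto(t,0,\dots,0)$), whereas you use the exponential map of a $\tau$-invariant metric; and the paper's verification of $\langle\gamma'(0)\rangle=\ell$ is the one-line limit $\ell=\bigl\langle\lim_{t\to 0}\frac{\gamma(t)-\gamma(-t)}{2t}\bigr\rangle$, which your more explicit normal-bundle computation unpacks.
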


\begin{proof} $\hat\Sigma_f$ is the fixed point set of the smooth involution on $\hat\Delta_f$,
which at each point of $\hat\Sigma_f$ is locally equivalent to the orthogonal reflection 
$(x_1,x_2,\dots,x_k)\mapsto (-x_1,x_2,\dots,x_k)$ in $\R^k$, where $k=2n-m$.
Hence there exists an equivariant (with respect to the involution $x\mapsto -x$ on $\R$) smooth curve 
$\beta\:\R\to\hat\Delta_f$ such that $\beta(0)=(x,\ell)$.
Let $\pi\:\hat N\to N^2$ be the blowdown map and $p\:N^2\to N$ the projection onto the first factor.
Let $\gamma=p\pi\beta$.
Then $\pi\beta(t)=\big(\gamma(t),\gamma(-t)\big)\in\bar\Delta_f$, so
$f\big(\gamma(t)\big)=f\big(\gamma(-t)\big)$ for each $t\in\R$.
Also $\ell=\left<\lim_{t\to 0}\frac{\gamma(t)-\gamma(-t)}{2t}\right>=\left<\gamma'(0)\right>$.
\end{proof}

\subsection{Extended Gauss map} \label{egauss}
If $f\:N\to M$ is a completely self-transverse map between manifolds without
boundary, then clearly $\check\Delta_f$ is a smooth manifold whose boundary is a double cover 
of $\hat\Sigma_f$.

\begin{lemma}\label{3.1} 
Let $N$, $M$ be smooth manifolds, $f\:N\to M$ a completely self-transverse map and $G\:N\to\R^k$
a smooth map such that $f\x G\:N\to M\x\R^k$ is a smooth embedding.
Then $\check G\:\check\Delta_f\to S^{k-1}\x[0,\infty)$, defined by
$\check G(x,y)=\Big(\frac{G(y)-G(x)}{||G(y)-G(x)||},\,||G(y)-G(x)||\Big)$ for 
$(x,y)\in\Delta_f$ and
by $\check G(x,v)=\Big(\frac{dG_x(v)}{||dG_x(v)||},\,0\Big)$ for $(x,v)\in\check\Sigma_f$,
is a smooth map.
\end{lemma}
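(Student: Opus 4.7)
The plan is to reduce smoothness of $\check g$ to Hadamard's lemma by working in explicit local coordinates on $\check N$ near the boundary. On the interior $\Delta_f$ smoothness is immediate: if $(x,y)\in\Delta_f$ then $f(x)=f(y)$ and $x\ne y$, so $g(x)\ne g(y)$ because $f\x g$ is an embedding, whence both components of $\check g(x,y)=\bigl((g(y)-g(x))/\|g(y)-g(x)\|,\,\|g(y)-g(x)\|\bigr)$ are smooth in $(x,y)$. The substance is therefore at the boundary $\check\Sigma_f$.

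For a boundary point $(x_0,v_0)\in\check\Sigma_f$, I would fix a chart $U\subset N$ around $x_0$ identifying $U$ with an open subset of $\R^n$. The linear tubular neighborhood $(x,w)\mapsto(x,x+w)$ identifies a neighborhood of $\Delta_U$ in $U^2$ with $U\x B$, where $B\subset\R^n$ is a small open ball around $0$; the spherical blowup of $U\x B$ at $U\x\{0\}$ then diffeomorphically identifies an open set in $\check N$ containing a neighborhood of $(x_0,v_0)$ with $U\x S^{n-1}\x[0,\eps)$, via $(x,u,t)\mapsto(x,x+tu)$ for $t>0$. By Hadamard's lemma, $g(x+tu)-g(x)=t\,H(x,u,t)$, where $H(x,u,t)=\int_0^1 dg_{x+stu}(u)\,ds$ is smooth in $(x,u,t)$ and $H(x,u,0)=dg_x(u)$.

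In these coordinates $\check g$ takes the form $\bigl(H(x,u,t)/\|H(x,u,t)\|,\,t\|H(x,u,t)\|\bigr)$ uniformly for $t\ge 0$: for $t>0$ the factor $t$ cancels in the direction and multiplies the norm, reproducing the interior formula, while at $t=0$ this reduces to $\bigl(dg_x(u)/\|dg_x(u)\|,\,0\bigr)$, matching the boundary formula (with the obvious correction of the typo $de_x$ to $dg_x$). Both components are smooth wherever $H$ is nonzero. At $(x,u,0)\in\check\Sigma_f$ one has $u\in\ker df_x$, and since $f\x g$ is an embedding $d(f\x g)_x$ is injective, so $dg_x$ is injective on $\ker df_x$, giving $H(x,u,0)=dg_x(u)\ne 0$. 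By continuity $H$ is nonvanishing on a full neighborhood of $\check\Sigma_f$, completing the proof. The only delicate point is matching the blowup chart with the a priori smooth structure on $\check N$, but this is built into the definition of the spherical blowup and is independent of the metric used to trivialize $S^{n-1}$.
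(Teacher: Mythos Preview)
Your proof is correct but takes a different route from the paper's. The paper argues functorially: it first treats the universal case $N=M\times\R^k$ with $f=p$ and $g=q$ the two projections, where $\check q$ is visibly smooth (being the composition of the natural map $\check\Delta_p\to(\R^k)\check{\,}$ induced by $q\times q$ with the obvious retraction of $(\R^k)\check{\,}$ onto $S^{k-1}\times[0,\infty)$). For general $f,g$, the smooth embedding $f\times g\:N\hookrightarrow M\times\R^k$ induces a smooth embedding of Axelrod--Singer completions $\check N\hookrightarrow(M\times\R^k)\check{\,}$, which restricts to a smooth embedding $e_*\:\check\Delta_f\to\check\Delta_p$, and then $\check g=\check q\circ e_*$. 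Your direct blowup-chart computation via Hadamard's lemma is more elementary and entirely self-contained; it makes the role of the injectivity of $d(f\times g)_x$ completely explicit at the level of $H(x,u,0)=dg_x(u)\ne 0$. The paper's approach, by contrast, packages this same fact into the single observation that a smooth embedding functorially induces a smooth map of completions, which is cleaner but hides the analytic content. Either argument is fine here.
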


\begin{proof} 
Let us first consider the case where $N=M\x\R^k$ and $f$ and $G$ are the projections 
$p\:M\x\R^k\to M$ and $q\:M\x\R^k\to\R^k$ so that $f\x G=\id_{M\x\R^k}$.
It is easy to see that $\check q$ (defined similarly to $\check G$) is the composition of the natural map 
$\check\Delta_p\to\check\Delta_c$, where $c\:\R^k\to\{0\}$ is the constant map,
and the obvious retraction of $(\R^k)\check{\,}$ onto its anti-diagonal.
In the general case, $f\x G$ induces a smooth embedding $\check N\emb (M\x\R^k)\check{\,}$, which 
in turn restricts to a smooth embedding $G_*\:\check\Delta_f\to\check\Delta_p$. 
Now $G$ factors into the composition $\check\Delta_f\xr{G_*}\check\Delta_p\xr{\check q}S^{k-1}\x[0,\infty)$.
\end{proof}

\subsection{The extended $2$-multi-$0$-jet transversality theorem} \label{extended}

\begin{theorem} \label{projective differential}
Let $N$ and $M$ be smooth manifolds and let $L$ be a smooth submanifold of $P(N\x M)\but N\x PM$.
Let $X_L$ be the set of smooth maps $f\:N\to M$ whose ``projective differential'' 
$\hat\Gamma_f|_{PN}\:PN\to P(N\x M)\but N\x PM$ is transverse to $L$.
Then $X_L$ is massive in $C^\infty(N,M)$ with the strong topology; if $L$ is closed, then ``massive'' can be replaced with
``open and dense''.
\end{theorem}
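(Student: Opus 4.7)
The approach is to factor the projective differential through the 1-jet space and reduce to the standard jet transversality theorem. Let $q\colon PN\to N$ be the projection, let $q^*J^1(N,M):=PN\x_N J^1(N,M)$, and define the ``universal projective differential''
\[\mu\colon q^*J^1(N,M)\to P(N\x M)\but N\x PM,\qquad ((x,\ell),y,\phi)\mapsto\Bigl(x,y,\,\bigl\langle v+\phi(v)\mid v\in\ell\bigr\rangle\Bigr).\]
I would first verify that $\mu$ is a smooth submersion: its kernel at each point consists of the variations of $\phi$ that vanish on $\ell$, which has exactly the codimension needed to match the fiber of $P(N\x M)\but N\x PM\to N\x M$. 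Since $\hat\Gamma_f|_{PN}=\mu\circ q^*j^1f$, it follows that $L^*:=\mu^{-1}(L)$ is a smooth submanifold of $q^*J^1(N,M)$ (closed when $L$ is), and that transversality of $\hat\Gamma_f|_{PN}$ to $L$ is equivalent to transversality of the pulled-back 1-jet section $q^*j^1f\colon PN\to q^*J^1(N,M)$ to $L^*$.

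The openness statement (for closed $L$) then follows from the standard openness of transversality to closed submanifolds in the strong topology on $C^\infty(PN,q^*J^1(N,M))$ (see \cite{GG}*{II.4.5}), together with continuity of $f\mapsto q^*j^1f$ as a map from $C^\infty(N,M)$ to $C^\infty(PN,q^*J^1(N,M))$ in the strong topologies -- a mild extension of the corresponding continuity of $f\mapsto j^1f$.

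For density, I would adapt the classical Thom argument. The key step is to show that the parametric evaluation map
\[\Theta\colon PN\x C^\infty(N,M)\to q^*J^1(N,M),\qquad ((x,\ell),f)\mapsto\bigl((x,\ell),f(x),df_x\bigr),\]
is a submersion. Indeed, the $f$-directional derivative at $((x_0,\ell_0),f_0)$ along a section $\eta$ of $f_0^*\T_M$ gives the vertical tangent vector $\bigl(\eta(x_0),d\eta_{x_0}\bigr)\in T_{f_0(x_0)}M\oplus\Hom(T_{x_0}N,T_{f_0(x_0)}M)$, and both components can be prescribed independently by a bump-function construction. Combined with the horizontal derivative of $q^*j^1f$, this gives surjectivity of $d\Theta$. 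The usual Baire--Sard machinery (as in \cite{Hi}*{3.2.8} and \cite{GG}*{II.4.9}) then yields that $\{f : q^*j^1f\pitchfork L^*\}$ is massive in $C^\infty(N,M)$ with the strong topology.

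The main technical delicacy is that $q^*j^1f$ is not an arbitrary section of $q^*J^1(N,M)\to PN$: it is constant along the fibers of $q$. However, this loss of freedom is exactly compensated by the structure of $L^*$ -- since $L^*=\mu^{-1}(L)$, the tangent variation of $L^*$ along each fiber $PT_xN$ is automatic, and no independent ``$\ell$-direction'' perturbation of $f$ is needed to achieve transversality.
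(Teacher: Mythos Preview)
Your proof is correct and follows essentially the paper's own route: both factor $\hat\Gamma_f|_{PN}$ through the pulled-back $1$-jet section $q^*j^1f$ and invoke the parametric family from the proof of the Jet Transversality Theorem (your submersion $\mu$ is the paper's restriction map $\Hom(\P_N^*\T_N,\dots)\to\Hom(\gamma,\dots)$ read through the identification of $P(N\times M)\setminus N\times PM$ with $\Hom(\gamma,\dots)$, and your $\Theta$-submersion corresponds to the paper's verification that $P\Phi$ is a local diffeomorphism). Your final ``delicacy'' paragraph is superfluous and its stated resolution slightly off --- $\ker d\mu$ consists of variations $\delta\phi$ with $\delta\phi|_\ell=0$ and contains no $\delta\ell$-component, so $TL^*$ does not automatically contain the fiber directions of $q\colon PN\to N$ --- but this is harmless: once $\Theta$ is a submersion, parametric transversality applies directly, and no compensation for the constraint on $q^*j^1f$ is needed.
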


\begin{proof} By the proof of Lemma \ref{xx-1}, $\hat\Gamma_f|_{PN}$ is identified with the section $s_f$ 
of the bundle $\Hom(\gamma,\P_N^*f^*\T_M)\to PN$ given by the composition 
$\gamma\subset\P_N^*\T_N\xr{df^{PN}}\P_N^*f^*\T_M$.
In particular, $L$ is identified with a submanifold of the total space $\Hom(\gamma,\P_N^*f^*\T_M)$.
Let $\bar L$ be the preimage of $L$ in $\Hom(\P_N^*\T_N,\P_N^*f^*\T_M)$ under the restricting map.
Clearly, $s_f$ is transverse to $L$ if and only if the section $\hat s_f\:PN\to\Hom(\P_N^*\T_N,\P_N^*f^*\T_M)$, 
given by $df^{PN}$, is transverse to $\bar L$.
On the other hand, $s_{df}\:N\to J^1(N,M)$ can be identified with a section of the bundle
$\Gamma_f^*\J^1(N,M)\:\Hom(\T_N,f^*\T_M)\to N$, and $\hat s_f$ is the induced section of 
the induced bundle $\P_N^*\Hom(\T_N,f^*\T_M)\to PN$.

By the proof of the Jet Transversality theorem \cite{GG}*{II.4.9} (see also \cite{Hi}*{3.2.8}),
for each $w\in N$ there exists a neighborhood $B$ of the origin in the vector space $\Hom(\R^n,\R^m)$ 
of all linear maps $\R^n\to\R^m$ and a smooth homotopy $g_b\:N\to M$, $b\in B$, such that $g_0=f$,
each $g_b|_{N\but U}=f|_{N\but U}$ for some fixed compact neighborhood $U$ of $w$, and the smooth map 
$\Phi\:N\x B\to\Hom(\T_N,f^*\T_M)$ defined by $\Phi(x,b)=(dg_b)_x$ sends some neighborhood of $(w,0)$ 
by a diffeomorphism onto some neighborhood of $df_w$.
Then the induced map $P\Phi\:PN\x B\to\P_N^*\Hom(\T_N,f^*\T_M)$, defined by 
$P\Phi(x,\ell,b)=\big(x,\ell,\Phi(x,b)\big)=(dg_b)^{PN}_{x,\ell}$, sends some neighborhood of $(w,\ell,0)$ by 
a diffeomorphism onto some neighborhood of $df^{PN}_{w,\ell}$ for each $\ell\in P_wN$.
In particular, $P\Phi$ is transverse to $\bar L$ at $df^{PN}_{w,\ell}$ for each $\ell\in P_wN$.
Let us note that $s_{dg_b}(x)=\Phi(x,b)$ and $\hat s_{g_b}(x,\ell)=P\Phi(x,\ell,b)$.
By \cite{GG}*{II.4.7} we get that $B$ contains a dense subset $B_0$ such that
$\hat s_{g_b}$ is transverse to $\bar L$ for each $b\in B_0$.
Let $b_1,b_2,\dots\in B_0$ converge to $0$, and set $f_i=g_{b_i}$.
Then each $\hat s_{f_i}$ is transverse to $\bar L$ and $f_i\to f$ in the strong topology due to
$f_i|_{N\but U}=f|_{N\but U}$.
Thus $X_L$ is dense.

Let $S\subset C^\infty\big(N,\Hom(\T_N,f^*\T_M)\big)$ be the subspace consisting of all sections
of the bundle $\Hom(\T_N,f^*\T_M)\to N$.
The map $C^\infty(N,M)\to S$ given by $f\mapsto s_{df}$ is continuous (see \cite{GG}*{II.3.4}),
and one can check that so is the map $S\to C^\infty\big(PN,\P_N^*\Hom(\T_N,f^*\T_M)\big)$,
given by sending every section of $\Hom(\T_N,f^*\T_M)\to N$ to the induced section of the induced bundle 
$\P_N^*\Hom(\T_N,f^*\T_M)\to PN$.
The subset of $C^\infty\big(PN,\P_N^*\Hom(\T_N,f^*\T_M)\big)$ consisting of all maps that are transverse 
to $\bar L$ is open if $\bar L$ is closed (see \cite{GG}*{II.4.5}), and is massive in general 
(see \cite{GG}*{proof of II.4.9}).
Hence the set of smooth maps $f\:N\to M$ such that $\hat s_f$ is transverse to $\bar L$ is open in
$C^\infty(N,M)$ if $\bar L$ is closed, and massive in general.
As noted above, this set coincides with $X_L$; and $\bar L$ is closed if $L$ is closed.
\end{proof}

From Theorem \ref{projective differential}, Lemma \ref{blowup} and the $2$-Multi-$0$-Jet Transversality 
Theorem we immediately obtain

\begin{corollary} \label{extended theorem}
Let $N$ and $M$ be smooth manifolds and let $L$ be a smooth submanifold of 
$\widehat{N\x M}\but\Delta_N\x\hat M$.
Let $X_L$ be the set of smooth maps $f\:N\to M$ such that $\hat\Gamma_f$ is transverse to $L$.
Then $X_L$ is massive in $C^\infty(N,M)$ with the strong topology; if $L$ is closed, then ``massive'' can be replaced with
``open and dense''.
\end{corollary}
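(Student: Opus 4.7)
The plan is to split $L$ into its intersections with the interior $\widetilde{N\x M}$ and with the exceptional divisor $P(N\x M)$ of the blowup $\widehat{N\x M}$, and apply the two transversality theorems to the pieces separately. Set $U=\widehat{N\x M}\but\Delta_N\x\hat M$. Using $\widehat{N\x M}=\widetilde{N\x M}\sqcup P(N\x M)$ together with the identification $\Delta_N\x\hat M=(\Delta_N\x\tilde M)\cup(N\x PM)$ inside $\widehat{N\x M}$, one sees that $U=U_1\sqcup U_2$, where $U_1:=\tilde N\x M$ is open in $\widehat{N\x M}$ and $U_2:=P(N\x M)\but N\x PM$ lies in the exceptional divisor; moreover each of $U_1,U_2$ is clopen in $U$. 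Hence any smooth submanifold $L\subset U$ decomposes as $L=L_1\sqcup L_2$ with $L_i:=L\cap U_i$ a smooth submanifold of $U_i$, and if $L$ is closed in $U$ then each $L_i$ is closed in $U_i$. Because $U_2$ is clopen in $U$, near any point of $L_2$ the submanifold $L$ is entirely contained in $P(N\x M)$, so in fact $L_2$ is a smooth submanifold of $P(N\x M)\but N\x PM$.

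Next I would verify that $\hat\Gamma_f$ is transverse to $L$ if and only if both (i) $\tilde\Gamma_f$ is transverse to $L_1$ as a submanifold of $\tilde N\x M$, and (ii) the projective differential $\hat\Gamma_f|_{PN}$ is transverse to $L_2$ as a submanifold of $P(N\x M)$. Claim (i) is immediate, since $\tilde N\x M$ is open in $\widehat{N\x M}$ and $\hat\Gamma_f|_{\tilde N}=\tilde\Gamma_f$. For (ii), Lemma \ref{blowup} tells us that $\hat\Gamma_f$ is always transverse to the exceptional divisor $P(N\x M)$, whose preimage under $\hat\Gamma_f$ is exactly $PN$; by the standard fact that, given a submanifold $A$ of a transversely met ambient submanifold $B$, transversality of a map to $A$ in the full ambient is equivalent to transversality of the restricted map to $A$ inside $B$, the transversality of $\hat\Gamma_f$ to $L_2\subset P(N\x M)$ in $\widehat{N\x M}$ reduces exactly to (ii).

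Finally I would combine the $2$-multi-$0$-jet Transversality Theorem, which guarantees (i) on a massive subset of $C^\infty(N,M)$ (open in the strong topology when $L_1$ is closed), with Theorem \ref{projective differential}, which yields the analogous conclusion for (ii). Their intersection is exactly $X_L$, which is therefore massive, and open when $L$ is closed. I expect the only mildly delicate point to be the clean articulation of the transitivity-of-transversality argument used in step (ii); everything else is essentially bookkeeping once the decomposition $L=L_1\sqcup L_2$ is in place.
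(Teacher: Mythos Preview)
Your overall strategy matches the paper's (which merely cites the three ingredients without elaboration), but your articulation contains a genuine error. The claim that $U_2=P(N\times M)\setminus N\times PM$ is clopen in $U$ is false: the Fulton--MacPherson completion $\widehat{N\times M}$ is a \emph{closed} manifold and the corona $P(N\times M)$ is a codimension-one closed submanifold of it (not a boundary component), so $U_2$ is closed but nowhere open in $U$. Consequently $L_2=L\cap P(N\times M)$ is not in general an open subset of $L$, and there is no reason for it to be a smooth submanifold of $P(N\times M)$; for instance $L$ may be tangent to the corona along a set that is not a manifold. Without $L_2$ being a submanifold, Theorem~\ref{projective differential} does not apply to it, and the transitivity-of-transversality reduction in your step (ii) is unavailable.

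A clean repair for density is to go back to the parametric method rather than combine the two theorems as black boxes: for $p\in PN$, the family $g_b$ built in the proof of Theorem~\ref{projective differential} makes $(p',b)\mapsto\hat\Gamma_{g_b}(p')$ a submersion near $(p,0)$ (the local diffeomorphism on the corona together with the normal direction supplied by Lemma~\ref{blowup}); combined with the standard family at points of $\tilde N$, one obtains a family for which $\hat N\times B\to\widehat{N\times M}$ is a submersion along $\{b=0\}$ on any prescribed compact set, and then parametric transversality gives density directly for the undivided $L$. For openness you should also argue directly, via continuity of $f\mapsto\hat\Gamma_f$ into $C^\infty(\hat N,U)$ and the openness of transversality to a closed submanifold of $U$; your proposed route via openness in the $2$-multi-$0$-jet theorem needs $L_1\subset K\times M$ for some compact $K\subset\tilde N$ (cf.\ the footnote in the paper), which does not follow from $L$ being closed in $U$.
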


The case $L=\hat N\x\Delta_M$ was already covered in Theorem \ref{xx-0}.
Next we note an application which needs a different $L$.

\subsection{Taking $\Sigma_f$ off a polyhedron} \label{lifting lemma}

\begin{corollary} \label{generic lifting}
Let $N$ and $M^m$ be smooth manifolds and let $Q$ be a closed subpolyhedron of $PN$.
If $\dim Q<m$, then for every generic smooth map $f\:N\to M$ the manifold $\hat\Sigma_f$ is disjoint from $Q$.
\end{corollary}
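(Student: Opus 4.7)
The plan is a transversality argument based on the identification, implicit in the proof of Lemma \ref{xx-1}, that
\[\hat\Sigma_f=\hat\Gamma_f^{-1}(PN\x M)=s_f^{-1}(Z)\subset PN,\]
where $s_f\colon PN\to E(\eta)$ is the section of the rank-$m$ bundle $\eta=\Hom(\gamma,\P_N^*f^*\T_M)\to PN$ induced by $df$, $Z$ is its zero section, and $PN\x M$ is a codimension-$m$ submanifold of $P(N\x M)$. Since $\dim Q<m=\mathrm{codim}\,Z$, transversality of $s_f|_Q$ to $Z$ would force $\hat\Sigma_f\cap Q=\emptyset$ by the dimension count. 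The catch is that $Q$ is only a polyhedron, so I will achieve the needed transversality one simplex at a time.

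Fix a locally finite triangulation of $Q$. For each relatively open simplex $\sigma\subset Q$---a smooth submanifold of $PN$ of dimension $d_\sigma\le\dim Q<m$---let
\[X_\sigma=\{f\in C^\infty(N,M)\mid s_f|_\sigma\pitchfork Z\}.\]
I claim that $X_\sigma$ is massive in the strong topology. Verifying the claim is a direct adaptation of the proof of Theorem \ref{projective differential}: the compactly supported local perturbations $g_b\colon N\to M$ produced by \cite{GG}*{proof of II.4.9} induce, via the passage to $\hat s_{g_b}$ used in the proof of Lemma \ref{xx-2}, a family $P\Phi\colon PN\x B\to\P_N^*\Hom(\T_N,f^*\T_M)$ that is a local diffeomorphism at $(x,\ell,0)$ for any $(x,\ell)\in PN$. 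The parametric transversality lemma \cite{GG}*{II.4.7}, applied to the restriction of $P\Phi$ to $\sigma\x B$, then yields a dense set of $b\in B$ for which $s_{g_b}|_\sigma\pitchfork Z$, and a Baire-style localization over a countable cover of $\sigma$ by relatively compact pieces upgrades density to massiveness. Granting the claim, the dimension count gives $\hat\Sigma_f\cap\sigma=\emptyset$ for every $f\in X_\sigma$, so $\bigcap_\sigma X_\sigma$ is a massive subset of $C^\infty(N,M)$ on which $\hat\Sigma_f\cap Q=\emptyset$.

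Finally, the set $\{f\mid\hat\Sigma_f\cap Q=\emptyset\}$ is itself open in the strong topology because $Q$ is closed in $PN$: the condition is that a continuous function of $f$ (the infimum over unit $v\in\ell$ with $(x,\ell)\in Q$ of the norm of $df_x(v)$) is bounded below by a positive continuous function on $N$, and this estimate persists under sufficiently small strong-topology perturbations; this is the same type of argument as the openness part of Corollary \ref{extended theorem}, applied with $L=PN\x M$ (which lies in the allowed locus, being disjoint from $N\x PM$). An open set containing a dense subset is open and dense, which by the appendix's conventions is the meaning of ``generic''. The only real obstacle is the simplex-by-simplex verification of the claim, but it goes through because the perturbation scheme of Theorem \ref{projective differential} is local on $N$ and hence restricts to any submanifold of $PN$ without modification.
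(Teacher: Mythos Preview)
Your proposal is correct and takes essentially the same approach as the paper: triangulate $Q$, obtain for each open simplex a massive set of $f$ on which $\hat\Sigma_f$ misses that simplex via the perturbation scheme underlying Theorem \ref{projective differential}, intersect, and then argue openness separately from closedness of $Q$. The only cosmetic difference is that the paper applies Theorem \ref{projective differential} directly as a black box with $L_i=\mathring\sigma_i\times M$ (so the dimension count $\dim L_i<2m=\dim P(N\times M)-\dim PN$ forces empty intersection), whereas you rephrase the same condition as $s_f|_\sigma\pitchfork Z$ and re-run the perturbation argument by hand; citing the theorem is shorter.
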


\begin{proof}
Let us fix some triangulation of $Q$.
Then $Q$ is the union of its open simplexes $\mathring\sigma_i$, which are smooth submanifolds of $PN$. 
This union is countable (and even finite if $N$ is compact).
Hence by Theorem \ref{projective differential} the set $S$ of maps $f\:N\to M$ such that $\hat\Gamma_f|_{PN}$ 
is transverse to each $L_i:=\mathring\sigma_i\x M$ is massive, and in particular dense in $C^\infty(N,M)$.
Since each $\dim L_i\le\dim Q\x M<2m$ and $\dim P(N\x M)-\dim PN=2m$, the image of $\hat\Gamma_f|_{PN}$ 
is disjoint from each $L_i$ and hence from $Q\x M$ for each $f\in S$.
In fact, $f\in S$ if and only if the image of $\hat\Gamma_f|_{PN}$ is disjoint from $Q\x M$.
We have $Q\x M=PN\x M\cap R$, where $R$ is the preimage of $Q$ under the projection 
$P(N\x M)\but N\x PM\to PN$.
Hence $f\in S$ if and only if $\hat\Gamma_f^{-1}(PN\x M)=\hat\Sigma_f$ is disjoint from $\hat\Gamma_f^{-1}(R)=Q$.

It remains to show that $S$ is open in $C^\infty(N,M)$.
Since $Q\x M$ is a closed subset of $P(N\x M)\but N\x PM$, the set of maps $PN\to P(N\x M)\but N\x PM$ whose
image is disjoint from $Q\x M$ is an open subset of $C^\infty\big(PN,\,P(N\x M)\but N\x PM\big)$.
On the other hand, the map $C^\infty(N,M)\to C^\infty\big(PN,\,P(N\x M)\but N\x PM\big)$ given by
$f\mapsto\hat\Gamma_f|_{PN}$ is continuous by the proof of Theorem \ref{projective differential}.
Hence $S$ is open.
\end{proof}

\begin{theorem} \label{taking off}
Let $f\:N^n\to M^m$ be a smooth map between smooth manifolds, where $N$ is compact,
$m\ge n$, such that $f$ is $i$-transverse for all $i$.
Let $P^p$ be a closed subpolyhedron of $N$ contained in $\Sigma_f$ and suppose that $k\ge p+j$, where 
$j=j(p)$ is the maximal number such that $p\le (j+1)n-jm-j^2$.
Then for every generic smooth map $g\:N\to\R^k$ the set $\Sigma_{f\x g}$ 
is disjoint from $P$, where $f\x g\:N\to M\x\R^k$ is the joint map.
\end{theorem}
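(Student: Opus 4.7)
The plan is to apply Corollary \ref{generic lifting} to $g\:N\to\R^k$ with the subset
\[
Q := \{(x,\ell)\in PN : x\in P,\ \ell\subset\ker df_x\}\subset PN,
\]
after realizing $Q$ as a closed subpolyhedron of $PN$ of dimension strictly less than~$k$. The key tautology is that $x\in\Sigma_{f\x g}$ iff $\ker df_x\cap\ker dg_x\ne 0$, so $P\cap\Sigma_{f\x g}=\emptyset$ is equivalent to $Q\cap\hat\Sigma_g=\emptyset$, where $\hat\Sigma_g=\{(x,\ell)\in PN:\ell\subset\ker dg_x\}$. With $Q$ as above, Corollary \ref{generic lifting} (applied to $g$, with the ``$M^m$'' of that corollary replaced by~$\R^k$) then delivers the conclusion.

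For the dimension count, I use that $f$ is $i$-transverse for every $i\ge 1$: the stratum $\Sigma^i_f$ is a smooth submanifold of $N$ of dimension $D_i:=(i+1)n-im-i^2$, and by Lemma \ref{xx-2} the set $\hat\Sigma_f = \bigsqcup_{i\ge 1} P(\ker df|_{\Sigma^i_f})$ is a smooth submanifold of $PN$. Set $P_i:=P\cap\Sigma^i_f$ and $Q_i:=\{(x,\ell)\in\hat\Sigma_f : x\in P_i\}$. Each $Q_i$ is a projective $\R P^{i-1}$-bundle over $P_i$, and $Q=\bigsqcup_{i\ge 1} Q_i$. For $i\le j$, $\dim P_i\le p$, giving $\dim Q_i\le p+(i-1)\le p+j-1$. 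For $i>j$, writing $i=j+1+s$ with $s\ge 0$, the identity
\[
D_{j+1+s} = D_{j+1} - s(m-n+2j+2+s)
\]
combined with $D_{j+1}\le p-1$ (from the maximality of $j$ and integrality) and the standing assumptions $m\ge n$ and $j\ge 0$ yield $D_i\le p-1-s$, and hence $\dim Q_i\le D_i+(i-1)\le p+j-1$. Thus $\dim Q\le p+j-1<k$.

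The main obstacle is promoting $Q$ to a bona fide closed subpolyhedron of $PN$, since the strata $\Sigma^i_f$ are smooth submanifolds but not a priori polyhedral. I would resolve this by fixing a triangulation of $N$ compatible with $P$ and with the Whitney-regular stratification $\{\Sigma^i_f\}_i$, so that each $P_i$ becomes a locally closed subpolyhedron of $P$, and then pulling back fiberwise triangulations of $P(\ker df|_{\Sigma^i_f})$ to endow $Q$ with a polyhedral structure (taking closures in $PN$ as needed). A cleaner alternative is to extend Corollary \ref{generic lifting} directly to the case where $Q$ is a finite disjoint union of smooth submanifolds each of dimension $<k$: applying Theorem \ref{projective differential} to each stratum in turn and intersecting the resulting massive sets sidesteps the need for a polyhedral structure altogether. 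Either route finishes the proof once the dimension bound $\dim Q<k$ above is in hand.
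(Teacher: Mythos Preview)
Your proof is correct and follows essentially the same route as the paper: define $Q$ as the preimage of $P$ in $\hat\Sigma_f$, observe that $\hat\Sigma_{f\times g}=\hat\Sigma_f\cap\hat\Sigma_g$ so that $P\cap\Sigma_{f\times g}=\emptyset$ iff $Q\cap\hat\Sigma_g=\emptyset$, bound $\dim Q$ stratum by stratum via $\dim Q_i=\dim P_i+i-1$, and invoke Corollary~\ref{generic lifting}. Your dimension count is the same as the paper's, just written out more explicitly (the paper compresses the $i>j$ case into the one-line estimate $\dim\Sigma^i_f\le\dim\Sigma^{j+1}_f+(j+1-i)\le p+j-i$).

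You are in fact more careful than the paper on one point: the paper applies Corollary~\ref{generic lifting} without checking that $Q$ is a closed subpolyhedron of $PN$. Your alternative (b)---bypassing the polyhedral hypothesis by applying Theorem~\ref{projective differential} directly to each smooth stratum $Q_i$ and intersecting the resulting open dense sets---is the cleanest fix, and is exactly what the proof of Corollary~\ref{generic lifting} does internally anyway (it only uses the triangulation to decompose $Q$ into finitely many smooth submanifolds). So this is a genuine but easily patched gap in the paper's presentation, and you have handled it correctly.
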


Let us note that $j(p)\le 1$ for all $p$ due to $p\le\dim\Sigma_f=2n-m-1$.
Thus $j(p)=1$ for each $p>3n-2m-4$.

\begin{proof} The projection $\hat\Sigma_{f\x g}\to\Sigma_{f\x g}$ is surjective and is a restriction of 
the projection $\hat\Sigma_f\to\Sigma_f$.
Hence $\Sigma_{f\x g}$ is disjoint from $P$ if and only if $\hat\Sigma_{f\x g}$ is disjoint from
the preimage $Q$ of $P$ under the projection $\hat\Sigma_f\to\Sigma_f$.
Clearly, $\hat\Sigma_{f\x g}=\hat\Sigma_f\cap\hat\Sigma_g$.
So $Q$ is disjoint from $\hat\Sigma_{f\x g}$ if and only if it is disjoint from $\hat\Sigma_g$.
Thus we need to show that for every generic smooth map $g\:N\to\R^k$ the manifold $\hat\Sigma_g$ 
is disjoint from $Q$.
To prove this, by Lemma \ref{generic lifting} it suffices to show that $\dim Q<k$.

Indeed, let $Q_i$ be the preimage of $P_i:=P\cap\Sigma^i_f$ under the same projection.
We have $\dim Q_i=\dim P_i+i-1$.
For each $i\le j$ we have $\dim Q_i\le p+i-1\le p+j-1<k$.
Since $f$ is $i$-transverse, $\dim\Sigma^i_f\le n-i(m-n+i)=(i+1)n-im-i^2$ (see \S\ref{1-jets}).
In particular, $\dim\Sigma^{i+1}_f<\dim\Sigma^i_f$.
Also, the definition of $j$ implies that $p>\dim\Sigma^{j+1}_f$.
Then for each $i>j$ we have $\dim P_i\le\dim\Sigma^i_f\le\dim\Sigma^{j+1}_f-j+1-i\le p+j-i$.
Hence $\dim Q_i\le p+j-1<k$.
Thus $\dim Q<k$.
\end{proof}

\section*{Acknowledgements}

I am grateful to P. Akhmetiev, L. Funar, A. Gorelov, M. Kazarian, M. Skopenkov and V. Vassiliev for stimulating 
conversations and useful remarks, the referees for useful feedback, and A. Papadopoulos for proofreading the text.

%

\end{document}